\numberwithin{equation}{section}
\newtheorem{theorem}{Theorem}[section]
\newtheorem{lemma}[theorem]{Lemma}
\newtheorem{proposition}[theorem]{Proposition}
\newtheorem{corollary}[theorem]{Corollary}
\theoremstyle{definition}
\newtheorem{definition}[theorem]{Definition}
\numberwithin{equation}{section}
\newtheorem{theory}{Theorem}
\newtheorem{lem}[theory]{Lemma}
\newcommand{\D}{\mathbb{D}}
\newcommand{\h}{\mathcal{H}}
\newcommand{\C}{\mathbb{C}}
\newcommand{\Z}{\mathbb{Z}}
\newcommand{\res}{\mathrm{restricted}}
\begin{document}
\title[Absolutely summing Carleson embeddings]{Absolutely summing Carleson embeddings on weighted Fock spaces with $A_{\infty}$-type weights}
\date{April 1, 2024. Revised in June 7, 2024.
}
\author[J. Chen, B. He and M. Wang]{Jiale Chen, Bo He and Maofa Wang}
\address{Jiale Chen, School of Mathematics and Statistics, Shaanxi Normal University, Xi'an 710119, China.}
\email{jialechen@snnu.edu.cn}
\address{Bo He, Department of Mathematics, Shantou University, Shantou 515063, China.}
\email{16bhe@stu.edu.cn}
\address{Maofa Wang, School of Mathematics and Statistics, Wuhan University, Wuhan 430072, China.}
\email{mfwang.math@whu.edu.cn}

\thanks{Chen was supported by the Fundamental Research Funds for the Central Universities (No. GK202207018) of China. He was supported by the NNSF (No. 12071272) of China, NSF of Guangdong Province (Nos. 2022A1515012117, 2024A1515010468) and LKSF STU-GTIIT Joint-research Grant (No. 2024LKSFG06). Wang was supported by the NNSF (No. 12171373) of China.}

\subjclass[2020]{30H20, 47B10}
\keywords{$r$-summing operator, Carleson embedding, weighted Fock space, $A_p$-type weight.}


\begin{abstract}
  \noindent In this paper, we investigate the $r$-summing Carleson embeddings on weighted Fock spaces $F^p_{\alpha,w}$. By using duality arguments, translating techniques and block diagonal operator skills, we completely characterize the $r$-summability of the natural embeddings $I_d:F^p_{\alpha,w}\to L^p_{\alpha}(\mu)$ for any $r\geq1$ and $p>1$, where $w$ is a weight on the complex plane $\C$ that satisfies an $A_p$-type condition. As applications, we establish some results on the $r$-summability of differentiation and integration operators, Volterra-type operators and composition operators. Especially, we completely characterize the boundedness of Volterra-type operators and composition operators on vector-valued Fock spaces for all $1<p<\infty$, which were left open before for the case $1<p<2$.
\end{abstract}
\maketitle


\section{Introduction}
\allowdisplaybreaks[4]
Recently, absolute summability of operators acting on various spaces of analytic functions has drawn many attentions \cite{BGJJ,Do99,FL20,FL22,HJLL,JL,LR18}. In particular, Lef\`{e}vre and Rodr\'{i}guez-Piazza \cite{LR18} characterized the $r$-summing Carleson embeddings on the Hardy spaces $H^p(\D)$ for $p>1$ and $r\geq1$. Later on, He et al. \cite{HJLL} established the corresponding result for the Bergman spaces $A^p(\D)$. In this paper, we are going to investigate the $r$-summing Carleson embeddings on weighted Fock spaces.

We now introduce the spaces to work on. A nonnegative function $w$ on the complex plane $\C$ is said to be a weight if it is locally integrable on $\C$. Given $\alpha,p>0$ and a weight $w$ on $\C$, the weighted space $L^p_{\alpha,w}$ consists of measurable functions $f$ on $\C$ such that
$$\|f\|^p_{L^p_{\alpha,w}}:=\int_{\C}|f(z)|^pe^{-\frac{p\alpha}{2}|z|^2}w(z)dA(z)<\infty,$$
where $dA$ is the Lebesgue measure on $\C$. The weighted Fock space $F^p_{\alpha,w}$ is defined by
$$F^p_{\alpha,w}:=L^p_{\alpha,w}\cap\mathcal{H}(\C)$$
with inherited (quasi-)norm, where $\mathcal{H}(\C)$ is the set of entire functions. If $w\equiv1$, then we obtain the standard Fock spaces $F^p_{\alpha}$. We refer to \cite{Zh} for a brief account on Fock spaces.

For $t>0$ and $z\in\C$, we write $Q_t(z)$ for the square in $\C$ centered at $z$ with side length $t$, whose sides are parallel to the coordinate axes. As usual, for $1\leq p\leq\infty$, $p'$ denotes the conjugate exponent of $p$, i.e. $1/p+1/p'=1$. Given $1<p<\infty$, we say a weight $w$ belongs to the class $A^{\res}_p$ if $w(z)>0$ for almost every $z\in\C$ and there exists some $t>0$ such that
\begin{equation}\label{Cpr}
\mathcal{C}_{p,t}(w):=\sup_{z\in\C}\left(\frac{1}{A(Q_t(z))}\int_{Q_t(z)}wdA\right)
\left(\frac{1}{A(Q_t(z))}\int_{Q_t(z)}w^{-\frac{p'}{p}}dA\right)^{\frac{p}{p'}}<\infty.
\end{equation}
The class $A^{\res}_p$ was introduced by Isralowitz \cite{Is}, and it was shown that the condition \eqref{Cpr} is independent of the choice of $t$: if $\mathcal{C}_{p,t_0}(w)<\infty$ for some $t_0>0$, then $\mathcal{C}_{p,t}(w)<\infty$ for any $t>0$. The purpose of introducing $A^{\res}_p$ is to characterize the boundedness of the Fock projections. Recall that for $\alpha>0$, the Fock projection $P_{\alpha}$ is defined for $f\in L^1_{\mathrm{loc}}(\C,dA)$ by
$$P_{\alpha}(f)(z):=\frac{\alpha}{\pi}\int_{\C}f(u)e^{\alpha\bar{u}z}e^{-\alpha|u|^2}dA(u),\quad z\in\C.$$
Isralowitz \cite{Is} proved that for $p>1$, $P_{\alpha}$ is bounded on $L^p_{\alpha,w}$ if and only if $w\in A^{\mathrm{restricted}}_p$. Later on, Cascante, F\`{a}brega and Pel\'{a}ez \cite{CFP} introduced the class $A^{\res}_1$, consisting of weights $w$ such that $w(z)>0$ a.e. on $\C$ and for some fixed $t>0$,
$$\mathcal{C}_{1,t}(w):=\sup_{z\in\C}\frac{\int_{Q_t(z)}wdA}{A(Q_t(z))\mathrm{ess}\inf_{u\in Q_t(z)}w(u)}<\infty,$$
and proved that $P_{\alpha}$ is bounded on $L^1_{\alpha,w}$ if and only if $w\in A^{\res}_1$. Similarly to the Muckenhoupt weights, we define
$$A^{\res}_{\infty}:=\bigcup_{1\leq p<\infty}A^{\res}_p.$$
In \cite{CFP}, Cascante, F\`{a}brega and Pel\'{a}ez established some properties of the weighted Fock spaces $F^p_{\alpha,w}$ induced by $w\in A^{\res}_{\infty}$. In particular, the boundedness of Carleson embeddings was characterized.

For a positive Borel measure $\mu$ on $\C$, let $L^p_{\alpha}(\mu)$ be the space of measurable functions $f$ on $\C$ such that
$$\|f\|^p_{L^p_{\alpha}(\mu)}:=\int_{\C}|f(z)|^pe^{-\frac{p\alpha}{2}|z|^2}d\mu(z)<\infty.$$
Write $D(z,t)$ for the disk centered at $z\in\C$ with radius $t>0$, and write $w(E):=\int_EwdA$ for measurable subsets $E\subset\C$. It was shown in \cite{CFP} that the natural embedding $I_d:F^p_{\alpha,w}\to L^p_{\alpha}(\mu)$ is bounded if and only if
$$\widehat{\mu}_{w}(z):=\frac{\mu(D(z,1))}{w(D(z,1))}$$
is bounded on $\C$. Moreover, using the same method, one can easily obtain that $I_d:F^p_{\alpha,w}\to L^p_{\alpha}(\mu)$ is compact if and only if $\lim_{|z|\to\infty}\widehat{\mu}_{w}(z)=0$. The purpose of this paper is to characterize the $r$-summability of the embedding $I_d:F^p_{\alpha,w}\to L^p_{\alpha}(\mu)$.

\begin{definition}
Given $r\geq1$, a linear operator $T:X\to Y$ between Banach spaces $X$ and $Y$ is said to be $r$-summing if there exists $C\geq0$ such that
$$\left(\sum_{j=1}^n\|Tx_j\|^r_Y\right)^{1/r}\leq
C\sup_{x^*\in B_{X^*}}\left(\sum_{j=1}^n|x^*(x_j)|^r\right)^{1/r}
=C\sup_{\{c_j\}\in B_{l^{r'}}}\left\|\sum_{j=1}^nc_jx_j\right\|_X$$
for every finite sequence $\{x_j\}_{1\leq j\leq n}\subset X$. The $r$-summing norm of $T$, denoted by $\pi_r(T)$, is the least suitable constant $C$.
\end{definition}

We write $\Pi_r(X,Y)$ to denote the set of all $r$-summing operators mapping $X$ into $Y$. The operators in $\Pi_1(X,Y)$ are also called to be absolutely summing. The roots of the theory of $r$-summing operators can be traced back to the work of Grothendieck \cite{Gr53}. However, it was not until ten years later that Pietsch \cite{Pi66} clearly defined this class of operators and established many fundamental properties of them. After that, the theory of $r$-summing operators developed quickly; see \cite{DJT,Pe,Wo} for more information.

Our main result is as follows, which characterizes the $r$-summability of $I_d:F^p_{\alpha,w}\to L^p_{\alpha}(\mu)$ for $p>1$ and $r\geq1$. We write $L^p:=L^p(\C,dA)$ to save the notation.

\begin{theorem}\label{main}
Let $\alpha>0$ and $\mu$ be a positive Borel measure on $\C$.
\begin{enumerate}[(1)]
  \item Let $1<p<2$ and $w\in A^{\res}_p$. Then for any $r\geq1$, $I_d:F^p_{\alpha,w}\to L^p_{\alpha}(\mu)$ is $r$-summing if and only if $\widehat{\mu}_w\in L^{2/p}$. Moreover,
      $$\pi_r(I_d)\asymp\pi_2(I_d)\asymp\left(\int_{\C}\widehat{\mu}_w(z)^{2/p}dA(z)\right)^{1/2}.$$
  \item Let $p\geq2$ and $w\in A^{\res}_{\infty}$. Then
  \begin{enumerate}[(i)]
    \item for $1\leq r\leq p'$, $I_d:F^p_{\alpha,w}\to L^p_{\alpha}(\mu)$ is $r$-summing if and only if $\widehat{\mu}_w\in L^{p'/p}$. Moreover,
        $$\pi_r(I_d)\asymp\pi_1(I_d)\asymp\left(\int_{\C}\widehat{\mu}_w(z)^{p'/p}dA(z)\right)^{1/p'}.$$
    \item for $p'\leq r\leq p$, $I_d:F^p_{\alpha,w}\to L^p_{\alpha}(\mu)$ is $r$-summing if and only if $\widehat{\mu}_w\in L^{r/p}$. Moreover,
        $$\pi_r(I_d)\asymp\left(\int_{\C}\widehat{\mu}_w(z)^{r/p}dA(z)\right)^{1/r}.$$
    \item for $r\geq p$, $I_d:F^p_{\alpha,w}\to L^p_{\alpha}(\mu)$ is $r$-summing if and only if $\widehat{\mu}_w\in L^1$. Moreover,
        $$\pi_r(I_d)\asymp\pi_p(I_d)\asymp\left(\int_{\C}\widehat{\mu}_w(z)dA(z)\right)^{1/p}.$$
  \end{enumerate}
\end{enumerate}
Here, $\widehat{\mu}_{w}(z)=\frac{\mu(D(z,1))}{w(D(z,1))}$ for $z\in\C$.
\end{theorem}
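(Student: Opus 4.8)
My plan is to reduce the problem to a discrete model on a lattice and then invoke known results about $r$-summing operators between sequence spaces, in the spirit of Lef\`{e}vre--Rodr\'{i}guez-Piazza and He et al. First I would fix a lattice $\{a_k\}_{k\in\Z}\subset\C$ such that the disks $D(a_k,1)$ cover $\C$ with bounded overlap, and the squares (or disks) $Q_k$ of a suitable size tile $\C$. Using the sub-mean-value property of $|f|^p$ for $f\in\mathcal{H}(\C)$ together with the $A^{\res}_\infty$ structure of $w$ — which, via the results of Cascante--F\`abrega--Pel\'aez, makes $w(D(z,1))$ behave like a doubling quantity and gives $\|f\|_{F^p_{\alpha,w}}^p\asymp\sum_k\|f\|_{F^p_{\alpha,w}(Q_k)}^p$ — I would show that the embedding $I_d:F^p_{\alpha,w}\to L^p_\alpha(\mu)$ is, up to two-sided equivalence of $r$-summing norms, comparable to a block-diagonal operator between $\ell^p$-direct sums of finite-dimensional (or model) pieces, where the $k$-th block is essentially multiplication by a scalar of size $\widehat{\mu}_w(a_k)^{1/p}$. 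The reproducing kernel normalized functions $k_z(u)=K_\alpha(u,z)/\|K_\alpha(\cdot,z)\|$ and the Fock-space translation (Weyl) operators will be the device that transplants everything to a fixed model and shows the blocks are uniformly comparable.

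Next I would compute the $r$-summing norm of the resulting block-diagonal operator. For an operator $D=\bigoplus_k \lambda_k$ from $\ell^p$ (of blocks) into $\ell^p$, the $r$-summing norm is governed by classical formulas: when the domain is an $L^p$-space the relevant Bennett--Carl type inclusions $\ell^p\hookrightarrow\ell^q$ and the identification of $\pi_r$ of diagonal operators $\ell^u\to\ell^v$ with an $\ell^s$-norm of the diagonal (with $1/s=1/r-(1/u-1/v)_+$ type bookkeeping) produce exactly the exponents appearing in Theorem~\ref{main}. Concretely: for $1<p<2$ the domain $F^p_{\alpha,w}$ has cotype $2$, every $r$-summing operator off it is $2$-summing with comparable norm, and $\pi_2$ of the diagonal is the $\ell^2$-norm of $(\lambda_k)$, i.e. $\big(\sum_k\widehat{\mu}_w(a_k)^{2/p}\big)^{1/2}\asymp\|\widehat{\mu}_w\|_{L^{2/p}}$; for $p\ge2$ one splits according to whether $r\le p'$ (where $r$-summing collapses to $1$-summing, giving the $\ell^{p'/p}$ exponent), $p'\le r\le p$ (giving the $\ell^{r/p}$ exponent), or $r\ge p$ (where it stabilizes at $\pi_p$, giving the $\ell^1$ exponent). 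The passage back and forth between the sum over the lattice and the integral $\int_\C\widehat{\mu}_w(z)^{s}\,dA(z)$ uses that $\widehat{\mu}_w$ is, up to constants, slowly varying on unit disks — again a consequence of the doubling property of $w(D(\cdot,1))$.

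For the two-sided estimates I would argue separately. The upper bound $\pi_r(I_d)\lesssim(\text{integral})^{1/s}$ follows by factoring $I_d$ through the block-diagonal majorant and applying the sequence-space computation together with the domination $\|I_d f\|_{L^p_\alpha(\mu)}^p\lesssim\sum_k\widehat{\mu}_w(a_k)\|f\|_{F^p_{\alpha,w}(D(a_k,C))}^p$, which is the pointwise estimate $|f(z)|^pe^{-p\alpha|z|^2/2}\lesssim \frac{1}{w(D(z,1))}\int_{D(z,1)}|f|^pe^{-p\alpha|u|^2/2}w\,dA$ from the theory of weighted Fock spaces. For the lower bound I would test $I_d$ on well-separated families of normalized reproducing kernels $\{k_{a_k}\}_{k\in S}$: their near-orthogonality makes $\sup_{\|c\|_{\ell^{r'}}\le1}\|\sum_k c_k k_{a_k}\|_{F^p_{\alpha,w}}$ comparable to the $\ell^{r'}\to\ell^p$ (or $\ell^p$-side) norm of the identity on the index set, while $\|I_d k_{a_k}\|_{L^p_\alpha(\mu)}^p\gtrsim\widehat{\mu}_w(a_k)$, and optimizing over finite subsets $S$ recovers the claimed $\ell^s$-norm. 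The main obstacle I anticipate is exactly this lower bound: one must control the cross terms $\sum_{k\ne j}c_k\bar c_j\langle k_{a_k},k_{a_j}\rangle$ in the $F^p_{\alpha,w}$-norm for $p\ne2$ (there is no inner product), which requires the off-diagonal decay of the kernel together with a Schur-test / block-diagonalization argument adapted to the $A^{\res}_\infty$ weight, and matching the discrete optimization to the continuous exponent $s$ uniformly in the configuration. The case analysis in part (2) also demands care in verifying that the ``collapsing'' phenomena ($\pi_r\asymp\pi_1$ for $r\le p'$, $\pi_r\asymp\pi_p$ for $r\ge p$) hold with constants independent of $\mu$, which I would get from the general theory (Pietsch factorization, cotype-$2$ / type-$2$ properties of $L^p$, and the Kwapie\'n--type inclusion theorems) rather than by hand.
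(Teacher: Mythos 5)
Your outline for $p\geq2$ is essentially the paper's argument: discretize over the lattice $\Z^2$, show via Rademacher averaging that the multiplication operator $M_{\lambda}$ with $\lambda_{\nu}=\widehat{\mu}_w(\nu)^{1/p}$ is dominated by $I_d$ (necessity), and for sufficiency factor $I_d$ through a block diagonal operator whose blocks $S_{\nu}:F^p_{\alpha,w}(D(\nu,2))\to L^p_{\alpha}(Q_1(\nu),\mu)$ are $1$-summing with $\pi_1(S_{\nu})\lesssim\widehat{\mu}_w(\nu)^{1/p}$, then invoke the known $\pi_r$-formulas for diagonal/block-diagonal operators on $\ell^p$-sums with $p\geq2$. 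One remark: for the necessity you do not need to fight the cross terms of the kernels that you flag as your main obstacle; the identity $\Delta=\int_0^1\Psi_{\tau}\circ I_d\circ\Phi_{\tau}\,d\tau$ with Rademacher signs only uses the one-sided estimate $\|\sum_{\nu}c_{\nu}K_{\nu}/\|K_{\nu}\|\|_{F^p_{\alpha,w}}\lesssim\|c\|_{\ell^p}$, so that difficulty evaporates.

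The genuine gap is in the case $1<p<2$, specifically the upper bound $\pi_2(I_d)\lesssim\big(\int_{\C}\widehat{\mu}_w^{2/p}dA\big)^{1/2}$. The block-diagonal machinery you propose to reuse is calibrated to $p\geq2$: the formulas $\pi_r(\oplus S_{\nu})\asymp(\sum_{\nu}\pi_r(S_{\nu})^r)^{1/r}$ (for $p'\leq r\leq p$) and the $\ell^{p'}$-aggregation for $\pi_1$ are proved in the sources you cite only for $\ell^p$-sums with $p\geq2$. For $1<p<2$ the relevant index is $r=2>p$, and the only aggregation one gets cheaply from order boundedness is the $\ell^p$-one, i.e. $\pi_2(I_d)\lesssim(\sum_{\nu}\widehat{\mu}_w(\nu))^{1/p}$ -- which, since $\ell^1\subset\ell^{2/p}$, is strictly weaker than the claimed $\ell^2$-norm of $(\widehat{\mu}_w(\nu)^{1/p})_{\nu}$. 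Even the scalar model (Garling's theorem that a diagonal of $\ell^p\to\ell^p$, $1\leq p\leq2$, is $r$-summing iff its diagonal lies in $\ell^2$) is a Grothendieck-type fact that does not extend by inspection to block diagonals with infinite-dimensional $1$-summing blocks, and you give no argument for it. The paper avoids this entirely by a different route for $1<p<2$: it identifies $(F^p_{\alpha,w})^*$ with $F^{p'}_{\alpha,w'}$, shows the kernel operator $R_{\alpha}:L^2_{\alpha}(\mu)\to L^{p'}_{\alpha,w'}$ is order bounded (hence $p'$-summing), uses trace duality to conclude that $P_{\alpha}:L^p_{\alpha,w}\to L^2_{\alpha}(\mu)$ is $1$-summing and factors $I_d$ through it, and then passes from target $L^2_{\alpha}(\mu)$ to target $L^p_{\alpha}(\mu)$ via a Ky Fan minimax argument that converts $\pi_2(I_d)^2$ into the norm of the Berezin transform $B_{\alpha}:L^{p/(2-p)}(\hat{w}^{2/(2-p)}dA)\to L^{p/2}(\mu)$. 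Without this (or some substitute supplying the sharp $\ell^2$-aggregation for $p<2$), your proof of part (1) is incomplete.
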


Generally speaking, there exist compact operators between Banach spaces which fail to be $r$-summing for all $1\leq r<\infty$, and there exist $1$-summing operators between Banach spaces which are not compact (see \cite[p. 38]{DJT}). However, it follows from Theorem \ref{main} that, for $1<p<\infty$ and $w\in A^{\res}_p$, if $I_d:F^p_{\alpha,w}\to L^p_{\alpha}(\mu)$ is $r$-summing for some $r\geq1$, then it is compact.

Our approach to Theorem \ref{main} is inspired by \cite{HJLL,LR18}. However, there are some new difficulties. The first one is that the Gaussian weight $e^{-\frac{p\alpha}{2}|z|^2}$ in the definitions of $F^p_{\alpha,w}$ and $L^p_{\alpha}(\mu)$ depends on the choice of $p$. The second one is that the weights $w\in A^{\res}_{\infty}$ are usually non-radial, and for the spaces $F^2_{\alpha,w}$, the reproducing kernels do not have explicit forms. Hence some new techniques are needed. In the case $1<p<2$, we first use a representation of the dual space of $F^p_{\alpha,w}$ to characterize the $2$-summability of $I_d:F^p_{\alpha,w}\to L^2_{\alpha}(\mu)$. Then we translate the general case $I_d:F^p_{\alpha,w}\to L^q_{\alpha}(\mu)$ ($1\leq q<2$) into the special case $q=2$, and connect the $2$-summability of $I_d:F^p_{\alpha,w}\to L^q_{\alpha}(\mu)$ with the boundedness of the Berezin transform $B_{\alpha}$ between some weighted Lebesgue spaces, which allows us to determine the $r$-summing embeddings $I_d:F^p_{\alpha,w}\to L^q_{\alpha}(\mu)$ for all $1<p<2$, $1\leq q\leq2$ and $r\geq1$ (see Theorem \ref{<}). In the case $p\geq2$, we characterize the $r$-summing embeddings $I_d:F^p_{\alpha,w}\to L^p_{\alpha}(\mu)$ via $r$-summing multipliers on the spaces $l^p$ and some block diagonal operators between the $l^p$-sums of sequences of some ``local'' spaces.

As an immediate application of Theorem \ref{main}, we can characterize the $r$-summing composition operators on weighted Fock spaces. Given $\varphi\in\h(\C)$, the composition operator $C_{\varphi}$ is defined by $C_{\varphi}f:=f\circ\varphi$, $f\in\h(\C)$. The boundedness and compactness of composition operators acting on $F^p_{\alpha}$ were determined in \cite{CMS}. Recently, Chen \cite{Ch23} investigated some properties, including the boundedness and compactness, of composition operators on weighted Fock spaces $F^p_{\alpha,w}$ induced by $w\in A^{\res}_{\infty}$. Based on Theorem \ref{main} and the following formula:
$$\|C_{\varphi}f\|^p_{F^p_{\alpha,w}}=\int_{\C}|f(\varphi(z))|^pe^{-\frac{p\alpha}{2}|z|^2}w(z)dA(z)
=\int_{\C}|f(z)|^pd\mu_{\varphi}^{p,\alpha,w}(z),$$
where the pull-back measure $\mu_{\varphi}^{p,\alpha,w}$ is defined for Borel sets $E\subset\C$ by $\mu^{p,\alpha,w}_{\varphi}(E):=\int_{\varphi^{-1}(E)}e^{-\frac{p\alpha}{2}|z|^2}w(z)dA(z)$, one can easily obtain the characterizations for the $r$-summability of composition operators acting on $F^p_{\alpha,w}$. However, in the standard
Fock space setting, we can obtain a more explicit result: for $p>1$ and $r\geq1$, $C_{\varphi}$ is $r$-summing on $F^p_{\alpha}$ if and only if $\varphi(z)=az+b$ for some $a,b\in\C$ with $|a|<1$. A striking consequence of this fact is that we can completely determine the boundedness of composition operators between some vector-valued Fock spaces. Given $p\geq1$, $\alpha>0$ and a complex Banach space $X$, the $X$-valued Fock space $F^p_{\alpha}(X)$ is defined to be the space of $X$-valued entire functions $f$ such that
$$\|f\|^p_{F^p_{\alpha}(X)}:=\int_{\C}\|f(z)\|^p_Xe^{-\frac{p\alpha}{2}|z|^2}dA(z)<\infty.$$
We also define the weak space $wF^p_{\alpha}(X)$, consisting of $X$-valued entire functions $f$ with
$$\|f\|^p_{wF^p_{\alpha}(X)}:=\sup_{x^*\in B_{X^*}}\|x^*\circ f\|_{F^p_{\alpha}}<\infty.$$
It is clear that $F^p_{\alpha}(X)\subset wF^p_{\alpha}(X)$. The boundedness of composition operators acting from weak to strong spaces of vector-valued analytic functions was first investigated in \cite{LTW} in the setting of Bergman and Hardy spaces. Recently, Chen and Wang \cite[Corollary 4.7]{CW21} shown that for $p\geq2$, $\alpha>0$ and any complex infinite-dimensional Banach space $X$, $C_{\varphi}:wF^p_{\alpha}(X)\to F^p_{\alpha}(X)$ is bounded if and only if $\varphi(z)=az+b$ for some $a,b\in\C$ with $|a|<1$. However, the method used in \cite{CW21,LTW} does not work for the case $1\leq p<2$. We can fortunately apply the characterization of $r$-summing composition operators on $F^p_{\alpha}$ to obtain the boundedness of $C_{\varphi}:wF^p_{\alpha}(X)\to F^p_{\alpha}(X)$ for the case $1<p<2$. Correspondingly, we also apply Theorem \ref{main} to characterize the $r$-summing Volterra-type operators on $F^p_{\alpha}$, and the bounded Volterra-type operators from weak spaces $wF^p_{\alpha}(X)$ to strong spaces $F^p_{\alpha}(X)$. 

The rest part of this paper is organized as follows. In Section \ref{pre}, we recall some preliminary results. Section \ref{proof} is devoted to the proof of Theorem \ref{main}. In Section \ref{app}, we apply Theorem \ref{main} to establish some results on the $r$-summability of differentiation and integration operators, Volterra-type operators and composition operators. Furthermore, we characterize the boundedness of Volterra-type and composition operators from weak to strong vector-valued Fock spaces, which complements the corresponding results in \cite{CW21}.

Throughout the paper, the notation $A\lesssim B$ (or $B\gtrsim A$) means that there exists a nonessential constant $C>0$ such that $A\leq CB$. If $A\lesssim B\lesssim A$, then we write $A\asymp B$. For a Banach space $X$, we use $B_X$ to denote the open unit ball of $X$, and if $X=L^p(\Omega,\Sigma,m)$ for some $p\geq1$ and some measure space $(\Omega,\Sigma,m)$, we write $B^+_X$ for the set of positive elements in $B_X$. For a subset $E\subset\C$, $\chi_E$ denotes its characteristic function.

\section{Preliminaries}\label{pre}

In this section, we give some preliminary results that will be used in the sequel.

\subsection{Absolutely summing operators}

We first recall some basic facts on $r$-summing operators, which can be found in \cite{DJT}.

It is well-known that for any $r\geq1$, the class $\Pi_r(X,Y)$ forms an operator ideal between Banach spaces: for any $T\in\Pi_r(X,Y)$, and for any two Banach spaces $X_0,Y_0$ such that both $S:X_0\to X$ and $U:Y\to Y_0$ are linear bounded operators, we have $UTS\in\Pi_r(X_0,Y_0)$ with
$$\pi_r(UTS)\leq\|U\|\pi_r(T)\|S\|.$$

We next recall that the class $\Pi_r(X,Y)$ is monotone with respect to $r$, that is, if $1\leq r_1\leq r_2<\infty$, then we have $\Pi_{r_1}(X,Y)\subset\Pi_{r_2}(X,Y)$, and for any $T\in\Pi_{r_1}(X,Y)$, $\pi_{r_2}(T)\leq\pi_{r_1}(T)$.

The following theorem gives an equivalent description of $r$-summing operators, which can be found in \cite[p. 56]{DJT}.

\begin{theory}\label{sum-T}
{\it
Let $X,Y$ be Banach spaces and $r\geq1$. Then a bounded operator $T:X\to Y$ is $r$-summing if and only if there exists $C\geq0$ such that for any measure space $(\Omega,\Sigma,m)$ and any measurable function $F:\Omega\to X$, one has
$$\int_{\Omega}\|T\circ F\|_Y^rdm\leq C\sup_{\xi\in B_{X^*}}\int_{\Omega}|\xi\circ F|^rdm.$$
Moreover, the best admissible $C$ is $\pi_r(T)^r$.
}
\end{theory}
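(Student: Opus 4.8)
The plan is to prove the two implications separately, keeping track of constants so that the best admissible $C$ comes out to be exactly $\pi_r(T)^r$. For the ``if'' part, suppose such a $C$ exists; given any finite sequence $x_1,\dots,x_n\in X$, I would apply the hypothesis with $\Omega=\{1,\dots,n\}$, $m$ the counting measure, $\Sigma$ the power set, and $F(j)=x_j$. Then $\int_\Omega\|T\circ F\|_Y^r\,dm=\sum_{j=1}^n\|Tx_j\|_Y^r$ while $\sup_{\xi\in B_{X^*}}\int_\Omega|\xi\circ F|^r\,dm=\sup_{\xi\in B_{X^*}}\sum_{j=1}^n|\xi(x_j)|^r$, so the assumed inequality becomes precisely the defining inequality for $r$-summability, showing $T\in\Pi_r(X,Y)$ with $\pi_r(T)^r\le C$.

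For the ``only if'' part, assume $T$ is $r$-summing; I must produce the integral inequality with $C=\pi_r(T)^r$. I would first treat a simple function $F=\sum_{j=1}^n x_j\chi_{A_j}$ with the $A_j$ pairwise disjoint, where we may assume $m(A_j)<\infty$ whenever $x_j\ne 0$ (otherwise the right-hand side is infinite by Hahn--Banach and there is nothing to prove). Then $\int_\Omega\|T\circ F\|_Y^r\,dm=\sum_{j=1}^n m(A_j)\|Tx_j\|_Y^r=\sum_{j=1}^n\|T\bigl(m(A_j)^{1/r}x_j\bigr)\|_Y^r$, and applying the defining $r$-summing inequality to the finite sequence $\{m(A_j)^{1/r}x_j\}_{j=1}^n$ bounds this by $\pi_r(T)^r\sup_{\xi\in B_{X^*}}\sum_{j=1}^n m(A_j)|\xi(x_j)|^r=\pi_r(T)^r\sup_{\xi\in B_{X^*}}\int_\Omega|\xi\circ F|^r\,dm$, which is the claim for simple $F$.

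Next I would pass to a general strongly measurable $F$ for which $K:=\sup_{\xi\in B_{X^*}}\int_\Omega|\xi\circ F|^r\,dm<\infty$ (if $K=\infty$ there is nothing to prove). Since $F$ is essentially separably valued and $\xi\circ F\in L^r(m)$ for each $\xi\in B_{X^*}$, the set $\{F\ne 0\}$ is $\sigma$-finite, so we may restrict to it and exhaust $\Omega$ by an increasing sequence of sets $E_k$ of finite measure on which $M_k:=\sup_{\omega\in E_k}\|F(\omega)\|_X<\infty$. On each $E_k$ the standard simple-function approximation for strongly measurable maps provides simple functions $s_n$, supported in $E_k$, with $\|s_n(\omega)\|_X\le 2M_k$ and $s_n\to F\chi_{E_k}$ $m$-a.e. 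Applying the simple-function case to $s_n$ and letting $n\to\infty$: on the left, dominated convergence on the finite-measure set $E_k$ (dominating constant $(2M_k\|T\|)^r$) gives $\int_\Omega\|T\circ s_n\|_Y^r\,dm\to\int_{E_k}\|T\circ F\|_Y^r\,dm$; on the right, Minkowski's inequality in $L^r(m)$ together with the pointwise bound $|\xi\circ(s_n-F\chi_{E_k})|\le\|s_n-F\chi_{E_k}\|_X$ gives, uniformly in $\xi\in B_{X^*}$, $\bigl(\int_\Omega|\xi\circ s_n|^r\,dm\bigr)^{1/r}\le K^{1/r}+\bigl(\int_{E_k}\|s_n-F\|_X^r\,dm\bigr)^{1/r}$, the last term tending to $0$ by dominated convergence. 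Hence $\int_{E_k}\|T\circ F\|_Y^r\,dm\le\pi_r(T)^r K$ for every $k$, and monotone convergence as $k\to\infty$ yields $\int_\Omega\|T\circ F\|_Y^r\,dm\le\pi_r(T)^r K$. Combined with the ``if'' part, this also shows $\pi_r(T)^r$ is the best admissible constant.

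The step I expect to be the main obstacle is this last limiting argument carried out without losing the sharp constant: a naive triangle inequality for $|\xi\circ s_n|$ would cost a spurious factor $2^{r-1}$, so one must route the estimate through Minkowski's inequality in $L^r$ and the fact that $|\xi\circ(s_n-F)|\le\|s_n-F\|_X$ holds uniformly over $\xi\in B_{X^*}$. A secondary point of care is that ``measurable'' must be read in the strong (Bochner) sense, so that $\omega\mapsto\|T\circ F(\omega)\|_Y$ and $\omega\mapsto\|F(\omega)\|_X$ are measurable and the controlled simple-function approximation with $\|s_n\|_X\le 2M_k$ is available; the reduction to the $\sigma$-finite set $\{F\ne 0\}$ and then to finite-measure pieces $E_k$ is what makes dominated convergence applicable.
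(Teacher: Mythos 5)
Your proof is correct. The paper does not prove this statement at all --- it is quoted as a known result with a pointer to \cite[p.~56]{DJT} --- and your argument is the standard one behind that citation: the ``if'' direction by specializing to counting measure on $\{1,\dots,n\}$, and the ``only if'' direction by absorbing the measures $m(A_j)$ into the vectors via $m(A_j)^{1/r}x_j$ for simple functions, then passing to general strongly measurable $F$ by truncation to finite-measure pieces and dominated convergence. The two points you flag --- reading ``measurable'' in the Bochner sense, and routing the limit through Minkowski's inequality with the uniform bound $|\xi\circ(s_n-F)|\le\|s_n-F\|_X$ so as not to lose the sharp constant $\pi_r(T)^r$ --- are exactly the right places to be careful, and you handle both correctly.
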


The following lemma reveals the relation between order bounded operators and $r$-summing operators, which is a direct consequence of \cite[Propositions 5.5 and 5.18]{DJT}. Recall that for a Banach space $X$ and a measure space $(\Omega,\Sigma,m)$, an operator $T:X\to L^p(\Omega,m)$ is said to be order bounded if there is a non-negative function $h\in L^p(\Omega,m)$ such that $|Tf|\leq h$ $m$-almost everywhere for each $f\in B_X$.

\begin{lem}\label{order}{\it
Let $X$ be a Banach space, $p\geq1$, and let $(\Omega,\Sigma,m)$ be a measure space. If the operator $T:X\to L^p(\Omega,m)$ is order bounded, then it is $p$-summing with
$$\pi_p(T)\leq\left\|\sup_{f\in B_X}|Tf|\right\|_{L^p(\Omega,m)}.$$}
\end{lem}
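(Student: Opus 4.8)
The plan is to verify the definition of a $p$-summing operator directly from the order bound. Fix a non-negative $h\in L^p(\Omega,m)$ with $|Tf|\le h$ $m$-a.e.\ for every $f\in B_X$; it suffices to show that $\pi_p(T)\le\|h\|_{L^p(\Omega,m)}$ for every such $h$, because the family $\{|Tf|:f\in B_X\}$ is order bounded by $h$, so its lattice supremum $\sup_{f\in B_X}|Tf|$ exists in $L^p(\Omega,m)$, is itself admissible, and has the smallest norm among all admissible choices; taking $h=\sup_{f\in B_X}|Tf|$ then yields the asserted estimate. So let $x_1,\dots,x_n\in X$ be arbitrary and not all zero; after rescaling we may assume $\sup_{\xi\in B_{X^*}}\big(\sum_{j=1}^n|\xi(x_j)|^p\big)^{1/p}=1$, which by the duality displayed in the definition of $r$-summing operators means $\big\|\sum_{j=1}^nc_jx_j\big\|_X\le1$ for every $(c_j)_{j=1}^n$ in the unit ball of the $n$-dimensional space $l^{p'}$.

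The core of the argument is a pointwise estimate on $\Omega$. For each $(c_j)$ in that ball we have $\sum_{j=1}^nc_jx_j\in B_X$, hence $\big|\sum_{j=1}^nc_j(Tx_j)(\omega)\big|=\big|T\big(\sum_jc_jx_j\big)(\omega)\big|\le h(\omega)$ for $m$-a.e.\ $\omega$, using linearity of $T$ and the fact that two elements of $L^p$ which coincide as vectors coincide pointwise a.e. Fixing representatives of $Tx_1,\dots,Tx_n$ and a countable dense subset $D$ of the unit ball of $l^{p'}_n$ (which is compact, hence separable), and discarding the countable union of the corresponding $m$-null sets, we obtain a single null set $N$ such that for $\omega\notin N$ the inequality $\big|\sum_jc_j(Tx_j)(\omega)\big|\le h(\omega)$ holds for all $(c_j)\in D$; by continuity of $(c_j)\mapsto\sum_jc_j(Tx_j)(\omega)$ it then holds for all $(c_j)$ in the whole ball. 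Taking the supremum over $(c_j)$ and invoking $l^{p'}$--$l^{p}$ duality in dimension $n$ gives
$$\Big(\sum_{j=1}^n|(Tx_j)(\omega)|^p\Big)^{1/p}\le h(\omega)\qquad\text{for }m\text{-a.e. }\omega.$$
Raising to the $p$-th power and integrating over $\Omega$ (the sum is finite, so the interchange with the integral is trivial) yields $\sum_{j=1}^n\|Tx_j\|_{L^p(\Omega,m)}^p\le\|h\|_{L^p(\Omega,m)}^p$, i.e.\ $\big(\sum_j\|Tx_j\|_{L^p}^p\big)^{1/p}\le\|h\|_{L^p}$. Undoing the normalization gives the $p$-summing inequality with constant $\|h\|_{L^p}$ for the original family $x_1,\dots,x_n$, and since that family was arbitrary, $T\in\Pi_p(X,L^p(\Omega,m))$ with $\pi_p(T)\le\|h\|_{L^p}$, as desired.

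The only delicate point is upgrading the uncountably many a.e.\ inequalities $|Tf|\le h$ ($f\in B_X$) to a bound valid outside a single null set and simultaneously over all relevant coefficients; reducing first to a finite family $x_1,\dots,x_n$, so that the parameter set shrinks to the separable ball of $l^{p'}_n$, is exactly what makes this step painless. Everything else — the rescaling, the $l^{p'}$--$l^p$ duality, the interchange of a finite sum with the integral, and the identification of $\sup_{f\in B_X}|Tf|$ as the optimal admissible majorant — is routine. One could alternatively deduce the result from Theorem~\ref{sum-T} by testing it on a simple $X$-valued function of the form $F=\sum_{j=1}^nx_j\chi_{A_j}$ over a suitable finite measure space, but the direct computation above is shorter and avoids introducing an auxiliary measure space.
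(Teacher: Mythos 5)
Your argument is correct. The paper itself does not prove this lemma; it simply records it as a direct consequence of Propositions 5.5 and 5.18 of Diestel--Jarchow--Tonge, so your self-contained verification is a genuinely different (more elementary) route. The two key moves you make are exactly the right ones: (i) reducing the uncountable family of a.e.\ inequalities $|Tf|\le h$, $f\in B_X$, to a single null set by restricting to the separable unit ball of $l^{p'}_n$ and a countable dense subset, and (ii) converting $\bigl|\sum_j c_j(Tx_j)(\omega)\bigr|\le h(\omega)$ for all $\|c\|_{p'}\le1$ into $\bigl(\sum_j|(Tx_j)(\omega)|^p\bigr)^{1/p}\le h(\omega)$ via finite-dimensional $l^{p'}$--$l^p$ duality, after which integration gives $\pi_p(T)\le\|h\|_{L^p}$. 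Your identification of the optimal majorant as the lattice (essential) supremum $\sup_{f\in B_X}|Tf|$ --- which exists by order completeness of $L^p$, is itself admissible, and minorizes every admissible $h$ in norm --- is also the correct reading of the constant in the statement, since a pointwise supremum over an uncountable family need not be measurable. What the citation buys the authors is brevity and a pointer to the general theory of order bounded operators; what your proof buys is transparency and independence from the Pietsch domination machinery, at the cost of half a page. The only cosmetic caveat is that $B_X$ is the open unit ball in this paper's conventions, so when you take $c$ on the boundary of the $l^{p'}_n$-ball you should either pass to the closed ball by a trivial scaling limit or restrict to $\|c\|_{p'}<1$ and take suprema at the end; this does not affect the argument.
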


We will also need a characterization for the $r$-summability of multiplication operators acting on sequence spaces. Given a sequence $\lambda=\{\lambda_{\nu}\}_{\nu\in\Z^2}$, the multiplication operator $M_{\lambda}$ is defined by
$$M_{\lambda}c:=\{\lambda_{\nu}c_{\nu}\}_{\nu\in\Z^2},\quad c=\{c_{\nu}\}_{\nu\in\Z^2}.$$
The following lemma gives the $r$-summing norm estimates for $M_{\lambda}$ acting on $l^p(\Z^2)$, which can be found in \cite[Proposition 4.1]{LR18}.

\begin{lem}\label{mul}{\it
Let $p\geq2$ and $r\geq1$. Consider the multiplication operator $M_{\lambda}:l^p(\Z^2)\to l^p(\Z^2)$. Then
\begin{enumerate}[(1)]
  \item for $r\leq p'$, $\pi_r(M_{\lambda})\asymp\|\lambda\|_{l^{p'}(\Z^2)}$.
  \item for $p'\leq r\leq p$, $\pi_r(M_{\lambda})\asymp\|\lambda\|_{l^r(\Z^2)}$.
  \item for $r\geq p$, $\pi_r(M_{\lambda})\asymp\|\lambda\|_{l^p(\Z^2)}$.
\end{enumerate}}
\end{lem}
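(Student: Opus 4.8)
The statement is a purely sequence-space fact, so the plan is to treat $l^p(\Z^2)$ as $l^p:=l^p(\N)$, use $(l^p)^*=l^{p'}$, and reduce to finite index sets: embedding $l^p(S)$ isometrically into $l^p$ and projecting back onto $l^p(S)$ shows $\pi_r(M_{\lambda|_S})\le\pi_r(M_\lambda)$ for finite $S$, while $\|\lambda\|_{l^s}=\sup_S\|\lambda|_S\|_{l^s}$; since the defining inequality for $\pi_r$ only involves finite families, it suffices to establish matching one-sided estimates, organized by the range of $r$.

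\textbf{Lower bounds.} For $r\ge p'$ (this handles (2)(ii), and gives a non-sharp bound in (2)(iii)) I would apply the definition of $\pi_r$ to the family $\{e_\nu\}_{\nu\in S}$: since $p'\le r$, one has $\|\xi|_S\|_{l^r}\le\|\xi\|_{l^{p'}}$, so $\sup_{\xi\in B_{l^{p'}}}\big(\sum_{\nu\in S}|\xi_\nu|^r\big)^{1/r}\le1$, whereas $\big(\sum_{\nu\in S}\|M_\lambda e_\nu\|_{l^p}^r\big)^{1/r}=\|\lambda|_S\|_{l^r}$; letting $S$ exhaust $\Z^2$ yields $\pi_r(M_\lambda)\ge\|\lambda\|_{l^r}$. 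For $r\le p'$ (case (2)(i)) I combine this at $r=p'$ with the monotonicity $\pi_r\ge\pi_{p'}$ to get $\pi_r(M_\lambda)\ge\|\lambda\|_{l^{p'}}$. For $r\ge p$ (case (2)(iii)) the test $\{e_\nu\}$ only detects $\|\lambda\|_{l^r}\le\|\lambda\|_{l^p}$, the wrong direction, so here I would invoke Theorem~\ref{sum-T} together with Khintchine's inequality: on a probability space carrying independent Rademacher variables $\{\epsilon_\nu\}_{\nu\in S}$, set $F:=\sum_{\nu\in S}\epsilon_\nu e_\nu$, an $l^p$-valued simple function; then $\|M_\lambda F(\omega)\|_{l^p}=\|\lambda|_S\|_{l^p}$ for every $\omega$, while for $\xi\in B_{l^{p'}}$ Khintchine and the inclusion $l^{p'}\subset l^2$ (valid as $p\ge2$) give $\int|\xi\circ F|^r\le\kappa_r^r\|\xi|_S\|_{l^2}^r\le\kappa_r^r$, where $\kappa_r$ is the Khintchine constant. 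Theorem~\ref{sum-T} then gives $\|\lambda|_S\|_{l^p}\le\kappa_r\,\pi_r(M_\lambda)$, and exhausting $\Z^2$ yields $\pi_r(M_\lambda)\gtrsim\|\lambda\|_{l^p}$.

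\textbf{Upper bounds.} For $p'\le r\le p$ I would note that $M_\lambda$ maps $l^p$ into $l^r$ (Hölder with exponents $p/r$ and $(p/r)'$) and is order bounded as an operator into $l^r$, since $|M_\lambda c|_\nu=|\lambda_\nu||c_\nu|\le|\lambda_\nu|$ for $c\in B_{l^p}$ and $|\lambda|\in l^r$ (we may assume $\lambda\in l^r$, else the claim is vacuous); Lemma~\ref{order} then gives $\pi_r(M_\lambda:l^p\to l^r)\le\|\lambda\|_{l^r}$, and composing with the contractive inclusion $l^r\hookrightarrow l^p$ ($r\le p$) and using the ideal property yields $\pi_r(M_\lambda)\le\|\lambda\|_{l^r}$, which is case (2)(ii). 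For $r\ge p$ the same argument with $r$ replaced by $p$ gives $\pi_p(M_\lambda)\le\|\lambda\|_{l^p}$, whence $\pi_r(M_\lambda)\le\pi_p(M_\lambda)\le\|\lambda\|_{l^p}$ by monotonicity — case (2)(iii). Finally, for $r\le p'$ it suffices, again by monotonicity, to bound $\pi_1(M_\lambda)$; I would factor $M_\lambda:l^p\to l^p$ through $l^p\to l^1\to l^2\to l^p$, the three maps being $M_\lambda$ (of norm $\|\lambda\|_{l^{p'}}$ by Hölder), the inclusion $j:l^1\to l^2$ (which is $1$-summing with $\pi_1(j)\le K_G$ by Grothendieck's theorem), and the contraction $l^2\hookrightarrow l^p$ ($p\ge2$); the ideal property gives $\pi_1(M_\lambda)\le K_G\|\lambda\|_{l^{p'}}$, hence $\pi_r(M_\lambda)\lesssim\|\lambda\|_{l^{p'}}$. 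Matching the bounds in each range gives the three equivalences.

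\textbf{Expected main obstacle.} The Hölder and order-boundedness computations are routine; the two steps needing a real idea are the lower bound in case (2)(iii) and the upper bound in case (2)(i). In the former, elementary test vectors only see $\|\lambda\|_{l^r}$, which for $r\ge p$ is dominated by $\|\lambda\|_{l^p}$, so one is forced to randomize and bring in the cotype-$p$ geometry of $l^p$ through Khintchine's inequality. In the latter, the only non-elementary input is Grothendieck's theorem — equivalently, the fact that a diagonal operator on $l^p$ ($p\ge2$) is absolutely summing precisely when its symbol lies in $l^{p'}$. All remaining steps are formal consequences of the operator-ideal axioms, the monotonicity of $\pi_r$ in $r$, Lemma~\ref{order} and Theorem~\ref{sum-T}.
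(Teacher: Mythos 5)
Your proof is correct. Note that the paper does not actually prove this lemma: it is quoted verbatim from \cite[Proposition 4.1]{LR18}, so there is no in-text argument to compare against; what you have written is a valid self-contained proof using exactly the standard toolkit for diagonal operators on $l^p$. All the individual steps check out: the reduction to finite index sets via the isometric embedding/projection and the ideal property; the lower bound $\pi_r(M_\lambda)\ge\|\lambda\|_{l^r}$ from testing on $\{e_\nu\}$ when $r\ge p'$ (using $\|\xi\|_{l^r}\le\|\xi\|_{l^{p'}}$), upgraded to $\|\lambda\|_{l^{p'}}$ for $r\le p'$ by monotonicity of $\pi_r$; the Rademacher/Khintchine lower bound via Theorem \ref{sum-T} for $r\ge p$, which correctly exploits $l^{p'}\subset l^2$ for $p\ge2$; the order-boundedness upper bound $\pi_r(M_\lambda\colon l^p\to l^r)\le\|\lambda\|_{l^r}$ from Lemma \ref{order} followed by the contractive inclusion $l^r\hookrightarrow l^p$; and the Grothendieck factorization $l^p\to l^1\to l^2\to l^p$ giving $\pi_1(M_\lambda)\le K_G\|\lambda\|_{l^{p'}}$. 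Your closing diagnosis of where the real content lies (Grothendieck's theorem for $r\le p'$, randomization/cotype for the lower bound when $r\ge p$) is also accurate; these are the same ideas underlying the proof in \cite{LR18}.
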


\subsection{Estimates involving weights}

We now recall some estimates involving weights in $A^{\res}_{\infty}$.

Note first that if $w\in A^{\res}_p$ for some $p>1$, then $w':=w^{-p'/p}\in A^{\res}_{p'}$ and
\begin{equation}\label{w'}
w(Q_1(z))\asymp w'(Q_1(z))^{-p/p'},\quad z\in\C.
\end{equation}
Consequently, we also have that $\hat{w}(z):=\int_{Q_1(z)}wdA$ belongs to $A^{\res}_p$.

We will need the following estimates, which can be found in \cite[Remark 2.3]{CFP} and \cite[Lemma 3.4]{Is}. We here treat $\mathbb{Z}^2$ as a subset of $\C$ in the natural way.

\begin{lem}\label{esti}{\it
Let $w\in A^{\mathrm{restricted}}_{\infty}$.
\begin{enumerate}[(1)]
  \item There exists $C>0$ such that for any $\nu,\nu'\in\mathbb{Z}^2$,
  $$\frac{w(Q_1(\nu))}{w(Q_1(\nu'))}\leq C^{|\nu-\nu'|}.$$
  \item For any $z,u\in\C$ with $|z-u|<3$ and any $1/4\leq t_1,t_2\leq1$,
  $$w(Q_{t_1}(z))\asymp w(Q_{t_1}(u))\asymp w(D(z,t_1))\asymp w(D(u,t_1))\asymp w(D(u,t_2)).$$
\end{enumerate}}
\end{lem}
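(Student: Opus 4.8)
The plan is to deduce both estimates from a single \emph{local doubling} property of weights in $A^{\res}_{\infty}$, obtained by marrying Hölder's inequality to the scale-independence of the $A^{\res}_p$ condition. Fix $p>1$ with $w\in A^{\res}_p$. For any square $Q$, writing $1=w^{1/p}w^{-1/p}$ and applying Hölder with exponents $p,p'$ gives
$$A(Q)=\int_Q w^{1/p}w^{-1/p}\,dA\leq w(Q)^{1/p}\left(\int_Q w^{-p'/p}\,dA\right)^{1/p'},$$
so $A(Q)^p\leq w(Q)\big(\int_Q w^{-p'/p}\,dA\big)^{p/p'}$. Now suppose $Q\subset Q'$ with the side lengths $\ell(Q),\ell(Q')$ ranging over a fixed finite set of scales. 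Since $\int_Q w^{-p'/p}\leq\int_{Q'}w^{-p'/p}$, and since \eqref{Cpr} at the scale $\ell(Q')$ (which is finite for each such scale by the stated scale-independence of \eqref{Cpr}) rearranges to $w(Q')\big(\int_{Q'}w^{-p'/p}\big)^{p/p'}\leq\mathcal{C}_{p,\ell(Q')}(w)\,A(Q')^p$, combining the two displays yields
$$w(Q')\leq\mathcal{C}_{p,\ell(Q')}(w)\,\frac{A(Q')^p}{A(Q)^p}\,w(Q)=\mathcal{C}_{p,\ell(Q')}(w)\left(\frac{\ell(Q')}{\ell(Q)}\right)^{2p}w(Q)\lesssim w(Q),$$
the implied constant depending only on $w$ and the chosen finite set of scales. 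This is the key lemma: within bounded scales, enlarging a square increases its $w$-mass by at most a bounded factor (and it is trivially nondecreasing).

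Granting this, part (2) becomes a chaining argument. For $1/4\leq t_1,t_2\leq1$ and $|z-u|<3$, each of $Q_{t_1}(z),\,Q_{t_1}(u),\,D(z,t_1),\,D(u,t_1),\,D(u,t_2)$ contains the square $Q_{1/4}$ centered at its own center, and is contained in $Q_8(u)$; here one uses the elementary inclusions $Q_{1/4}(x)\subset D(x,1/4)$ and $D(x,t)\subset Q_{2t}(x)$ together with the triangle inequality. Since every such center $c$ satisfies $|c-u|<3$, the local doubling lemma (applied across scales $1/4$, $8$, and, say, $20$) gives $w(Q_{1/4}(c))\asymp w(Q_8(u))$, whence each of the five quantities lies between constant multiples of $w(Q_8(u))$ and is therefore $\asymp w(Q_8(u))$, hence $\asymp$ one another. (The same reasoning shows they are all $\asymp w(Q_1(u))\asymp w(Q_1(z))$.)

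Part (1) then follows by telescoping along a lattice path. For $\mu,\mu'\in\Z^2$ adjacent or diagonally adjacent we have $|\mu-\mu'|\leq\sqrt2<3$, so part (2) with $t_1=t_2=1$ gives $w(Q_1(\mu))\asymp w(Q_1(\mu'))$ with a uniform constant $C_0$. Joining $\nu$ to $\nu'$ by a king-move path in $\Z^2$ of length $\|\nu-\nu'\|_{\infty}\leq|\nu-\nu'|$ and multiplying the consecutive ratios gives
$$\frac{w(Q_1(\nu))}{w(Q_1(\nu'))}\leq C_0^{\,\|\nu-\nu'\|_{\infty}}\leq C_0^{\,|\nu-\nu'|},$$
so $C=C_0$ works.

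There is no deep obstacle here; the only point requiring care is the bookkeeping of scales — one must ensure that every square for which \eqref{Cpr} is invoked has side length in a fixed finite set (the set $\{1/4,1,2,8,20\}$ is more than enough), so that the finitely many constants $\mathcal{C}_{p,t}(w)$ involved are all finite and may be absorbed into one nonessential constant. One should also note that $w\in A^{\res}_{\infty}$ only furnishes membership in $A^{\res}_p$ for \emph{some} $p$, so all constants above depend on $w$ through $p$ and the relevant $\mathcal{C}_{p,t}(w)$; this is harmless for the $\asymp$-assertions claimed.
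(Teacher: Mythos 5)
Your proof is correct. The paper does not actually prove this lemma---it only cites \cite[Remark 2.3]{CFP} and \cite[Lemma 3.4]{Is}---so your self-contained derivation is a legitimate substitute, and it follows the standard route underlying those references: H\"older's inequality together with the $A^{\res}_p$ condition at the scale $\ell(Q')$ gives the local doubling bound $w(Q')\leq \mathcal{C}_{p,\ell(Q')}(w)\,(\ell(Q')/\ell(Q))^{2p}\,w(Q)$ for nested squares, and the two assertions then follow by sandwiching each set between $Q_{1/4}$ of its own center and $Q_8(u)$, respectively by chaining along a king-move path of length $\|\nu-\nu'\|_{\infty}\leq|\nu-\nu'|$. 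All the inclusions you invoke check out ($Q_{1/4}(c)\subset D(c,1/4)$, $D(c,t)\subset Q_{2t}(c)$, and $Q_{1/4}(c)\subset Q_8(u)$ whenever $|c-u|<3$). The one step you should make explicit is the opening sentence: $A^{\res}_{\infty}$ is the union over $1\leq p<\infty$, so to ``fix $p>1$ with $w\in A^{\res}_p$'' you need the elementary nesting $A^{\res}_q\subset A^{\res}_p$ for $q\leq p$ (immediate from H\"older applied to $w^{-1/(p-1)}$, and from the definition of $A^{\res}_1$ in the case $q=1$); otherwise a weight known only to lie in $A^{\res}_1$ would not be covered. With that remark added, the argument is complete.
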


The following lemma establishes some pointwise estimates for entire functions, which was proved in \cite[Lemma 3.1]{CFP}.

\begin{lem}\label{pointwise}{\it
Let $\alpha,p,t>0$ and $w\in A^{\res}_{\infty}$. Then for $f\in\h(\C)$ and $z\in\C$,
$$|f(z)|^pe^{-\frac{p\alpha}{2}|z|^2}\lesssim
\frac{1}{w(D(z,t))}\int_{D(z,t)}|f(u)|^pe^{-\frac{p\alpha}{2}|u|^2}w(u)dA(u),$$
where the implicit constant is independent of $f$ and $z$.
}
\end{lem}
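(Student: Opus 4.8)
The plan is to reduce the statement to the classical unweighted sub-mean value inequality for entire functions, applied with a fractional exponent adapted to the $A^{\res}$-index of $w$, and then to reinstate the weight via Hölder's inequality.

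\emph{Step 1 (unweighted sub-mean value with a free exponent).} First I will prove that for every $s>0$, every $h\in\h(\C)$ and every $z\in\C$,
\[
|h(z)|^{s}e^{-\frac{s\alpha}{2}|z|^{2}}\lesssim\frac{1}{A(D(z,t))}\int_{D(z,t)}|h(u)|^{s}e^{-\frac{s\alpha}{2}|u|^{2}}\,dA(u),
\]
with a constant depending only on $s,\alpha,t$. Indeed, fixing $z$ and setting $g(u):=h(u)e^{-\alpha\bar{z}u}$, the function $g$ is entire (as $u\mapsto\bar zu$ is holomorphic), so $|g|^{s}$ is subharmonic and $|g(z)|^{s}\le A(D(z,t))^{-1}\int_{D(z,t)}|g|^{s}\,dA$. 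The identity $\mathrm{Re}(\bar{z}u)=\tfrac12(|u|^{2}+|z|^{2}-|u-z|^{2})$ gives $|g(u)|^{s}=|h(u)|^{s}e^{-\frac{s\alpha}{2}|u|^{2}}e^{-\frac{s\alpha}{2}|z|^{2}}e^{\frac{s\alpha}{2}|u-z|^{2}}$ and $|g(z)|^{s}=|h(z)|^{s}e^{-\frac{s\alpha}{2}|z|^{2}}e^{-\frac{s\alpha}{2}|z|^{2}}$; cancelling one factor $e^{-\frac{s\alpha}{2}|z|^{2}}$ and bounding $e^{\frac{s\alpha}{2}|u-z|^{2}}\le e^{s\alpha t^{2}/2}$ on $D(z,t)$ yields the claim.

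\emph{Step 2 (reinserting the weight).} Since $w\in A^{\res}_{\infty}=\bigcup_{1\le q<\infty}A^{\res}_{q}$, fix $q\ge1$ with $w\in A^{\res}_{q}$, write $G:=|f|^{p}e^{-\frac{p\alpha}{2}|\cdot|^{2}}$, apply Step 1 with $h=f$ and $s=p/q$, and raise the result to the power $q$:
\[
G(z)\lesssim\left(\frac{1}{A(D(z,t))}\int_{D(z,t)}G(u)^{1/q}\,dA(u)\right)^{q}.
\]
Then, by Hölder's inequality with exponents $q,q'$ (writing $G^{1/q}=(Gw)^{1/q}w^{-1/q}$, legitimate since $w>0$ a.e.) together with the $A^{\res}_{q}$-condition \eqref{Cpr} — which holds at the scale $t$ by scale-independence and transfers from the square $Q_{2t}(z)$ to $D(z,t)$ using Lemma \ref{esti} — one obtains
\[
\int_{D(z,t)}G^{1/q}\,dA\le\left(\int_{D(z,t)}Gw\,dA\right)^{1/q}\left(\int_{D(z,t)}w^{-q'/q}\,dA\right)^{1/q'}\lesssim\left(\int_{D(z,t)}Gw\,dA\right)^{1/q}\frac{A(D(z,t))}{w(D(z,t))^{1/q}}.
\]
Combining the two displays, the powers of $A(D(z,t))$ cancel and we get $G(z)\lesssim w(D(z,t))^{-1}\int_{D(z,t)}Gw\,dA$, which is the asserted inequality. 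When $q=1$ the Hölder step is replaced by $\int_{D(z,t)}G\,dA\le(\mathrm{ess}\inf_{D(z,t)}w)^{-1}\int_{D(z,t)}Gw\,dA$ together with the $A^{\res}_{1}$-estimate $A(D(z,t))\,\mathrm{ess}\inf_{D(z,t)}w\gtrsim w(D(z,t))$.

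The only genuinely nontrivial point is Step 2: a pointwise comparison of $G$ on $D(z,t)$ is unavailable — $G(z)=0$ whenever $f(z)=0$ — so the weight has to be reintroduced at the level of the mean value inequality itself, and it is precisely the matching of the Hölder exponent $q$ with the $A^{\res}_{q}$-index of $w$ that makes the factors $A(D(z,t))$ and $w(D(z,t))$ balance. Step 1 is the familiar Fock-space pointwise estimate, the only adjustment being the use of the exponent $s=p/q\le p$ in place of $s=p$.
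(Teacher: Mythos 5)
Your proof is correct. The paper does not actually prove this lemma — it is quoted from \cite[Lemma 3.1]{CFP} — and your argument (the sub-mean-value inequality for $|f(u)e^{-\alpha\bar z u}|^{p/q}$, followed by H\"older's inequality with the exponent $q$ matched to the $A^{\res}_q$ class containing $w$) is the standard route and, to the extent a comparison is meaningful, the same one. One small caveat: Lemma \ref{esti}(2) is stated only for scales in $[1/4,1]$, so for general $t>0$ the comparison $w(Q_{2t}(z))\asymp w(D(z,t))$ should instead be read off directly from the $A^{\res}_q$ condition at scale $2t$ (apply H\"older to $A(E)=\int_E w^{1/q}w^{-1/q}dA$ with $E=D(z,t)\subset Q_{2t}(z)$ to get $w(D(z,t))\gtrsim w(Q_{2t}(z))$); the resulting constant depends on $t$, which the statement of the lemma permits.
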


For $u\in\C$, we write $K_u(z):=e^{\alpha\bar{u}z}$ for the reproducing kernels of the standard Fock space $F^2_{\alpha}$. The following lemma gives some test functions in $F^p_{\alpha,w}$; see \cite[Propositions 4.1 and 4.2]{CFP}. 

\begin{lem}\label{test}{\it
Let $\alpha,p>0$ and $w\in A^{\res}_{\infty}$.
\begin{enumerate}[(1)]
  \item For any $u\in\C$,
  $$\|K_u\|^p_{F^p_{\alpha,w}}\asymp e^{\frac{p\alpha}{2}|u|^2}w(D(u,1)).$$
  \item For any sequence $c=\{c_{\nu}\}\in l^p(\Z^2)$, the function
  $$f:=\sum_{\nu\in\Z^2}c_{\nu}\frac{K_{\nu}}{\|K_{\nu}\|_{F^p_{\alpha,w}}}$$
  belongs to $F^p_{\alpha,w}$ with $\|f\|_{F^p_{\alpha,w}}\lesssim\|c\|_{l^p(\Z^2)}$.
\end{enumerate}
}
\end{lem}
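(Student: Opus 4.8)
The plan is to reduce both parts to one elementary weighted Gaussian estimate: for every fixed $c>0$,
$$\int_{\C}e^{-c|z-u|^2}w(z)\,dA(z)\asymp w(D(u,1)),\qquad u\in\C,$$
with implicit constants depending only on $c$ and $w$. The lower bound is immediate by restricting the integral to $D(u,1)$, where the exponential is bounded below by $e^{-c}$. For the upper bound I would tile $\C$ by the unit squares $Q_1(u+\nu)$, $\nu\in\Z^2$; on $Q_1(u+\nu)$ the Gaussian is $\lesssim e^{-c'|\nu|^2}$ for a suitable $c'>0$, while a chaining argument through overlapping unit squares based on Lemma \ref{esti} (parts (1) and (2)) gives $w(Q_1(u+\nu))\lesssim C^{|\nu|}w(D(u,1))$. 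Summing $\sum_{\nu\in\Z^2}e^{-c'|\nu|^2}C^{|\nu|}<\infty$, since Gaussian decay dominates geometric growth, yields the claim. This step — controlling the interplay between the at-most-geometric growth of $w$ on distant squares and the Gaussian decay, uniformly in $u$ — is the only real content; once it is in hand, both parts are short.

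For part (1), completing the square in the exponent gives
$$\|K_u\|^p_{F^p_{\alpha,w}}=\int_{\C}e^{p\alpha\,\mathrm{Re}(\bar uz)}e^{-\frac{p\alpha}{2}|z|^2}w(z)\,dA(z)=e^{\frac{p\alpha}{2}|u|^2}\int_{\C}e^{-\frac{p\alpha}{2}|z-u|^2}w(z)\,dA(z),$$
and the Gaussian estimate with $c=p\alpha/2$ finishes it.

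For part (2), I would first observe that the series converges locally uniformly: Lemma \ref{esti}(1) gives $w(D(\nu,1))^{-1/p}\lesssim C^{|\nu|/p}$, which is again dominated by the Gaussian factor of $K_{\nu}$, so $f\in\h(\C)$. To bound the norm, part (1) yields the pointwise estimate
$$\Big|\sum_{\nu\in\Z^2}c_{\nu}\frac{K_{\nu}(z)}{\|K_{\nu}\|_{F^p_{\alpha,w}}}\Big|e^{-\frac{\alpha}{2}|z|^2}\lesssim\sum_{\nu\in\Z^2}|c_{\nu}|\frac{e^{-\frac{\alpha}{2}|z-\nu|^2}}{w(D(\nu,1))^{1/p}}.$$
Writing $e^{-\frac{\alpha}{2}|z-\nu|^2}=e^{-\frac{\alpha\theta}{2}|z-\nu|^2}\cdot e^{-\frac{\alpha(1-\theta)}{2}|z-\nu|^2}$ for a fixed $\theta\in(0,1)$ and applying Hölder's inequality in $\nu$ with exponents $p,p'$, the factor $\big(\sum_{\nu}e^{-\frac{p'\alpha(1-\theta)}{2}|z-\nu|^2}\big)^{1/p'}$ is bounded uniformly in $z$, so raising to the $p$-th power and integrating against $w\,dA$ gives
$$\|f\|^p_{F^p_{\alpha,w}}\lesssim\sum_{\nu\in\Z^2}\frac{|c_{\nu}|^p}{w(D(\nu,1))}\int_{\C}e^{-\frac{p\alpha\theta}{2}|z-\nu|^2}w(z)\,dA(z)\lesssim\sum_{\nu\in\Z^2}|c_{\nu}|^p,$$
the last step being the Gaussian estimate with $c=p\alpha\theta/2$. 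Applying this bound to the partial sums $\sum_{|\nu|\leq N}$ and passing to the limit via Fatou's lemma completes the argument.
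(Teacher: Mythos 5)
The paper itself gives no proof of this lemma; it is quoted directly from Propositions 4.1 and 4.2 of \cite{CFP}, so there is nothing internal to compare against. Your self-contained argument is the standard route and is essentially correct: the reduction of both parts to the single estimate $\int_{\C}e^{-c|z-u|^2}w\,dA\asymp w(D(u,1))$ is exactly the right organizing principle, the lower bound and the tiling/chaining upper bound via Lemma \ref{esti} are sound (note that part (1) of that lemma is stated only for lattice centers, so your appeal to part (2) to shift $Q_1(u+\nu)$ to the nearest lattice square before invoking the geometric growth is the necessary extra step, and you correctly flag it), and the completion of the square in part (1) and the H\"older-with-split-Gaussian argument in part (2) both check out, including the uniform boundedness of $\sum_{\nu}e^{-\beta|z-\nu|^2}$ in $z$.

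The one caveat is the range of $p$: the lemma is stated for all $p>0$, but your H\"older step with conjugate exponents $p,p'$ only makes sense for $p>1$ (with $p=1$ as the degenerate case $p'=\infty$). For $0<p<1$ you should instead use the subadditivity $\bigl(\sum_{\nu}a_{\nu}\bigr)^p\leq\sum_{\nu}a_{\nu}^p$ for nonnegative $a_{\nu}$, which in fact shortens the argument — no splitting of the Gaussian factor is needed, and one lands directly on $\sum_{\nu}|c_{\nu}|^p e^{-\frac{p\alpha}{2}|z-\nu|^2}w(D(\nu,1))^{-1}$ before integrating. This is a trivial repair rather than a genuine gap, but as written your part (2) does not cover the full stated range. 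The final Fatou step is also dispensable, since your pointwise bound already applies to the full (absolutely convergent) series.
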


We end this section by the following estimate.

\begin{lemma}\label{dis}
Let $\gamma,\eta>0$, $w\in A^{\res}_{\infty}$, and let $\mu$ be a positive Borel measure on $\C$. Then
$$\int_{\C}\frac{\mu(D(z,1))^{\gamma}}{w(D(z,1))^{\eta}}dA(z)\asymp
\sum_{\nu\in\Z^2}\frac{\mu(Q_1(\nu))^{\gamma}}{w(Q_1(\nu))^{\eta}}.$$
\begin{proof}
Note that for $\nu\in\Z^2$, $Q_1(\nu)\subset D(z,1)$ for any $z\in Q_{1/3}(\nu)$. Therefore, Lemma \ref{esti} implies that
\begin{align*}
\sum_{\nu\in\Z^2}\frac{\mu(Q_1(\nu))^{\gamma}}{w(Q_1(\nu))^{\eta}}
\lesssim\sum_{\nu\in\Z^2}\int_{Q_{1/3}(\nu)}\frac{\mu(D(z,1))^{\gamma}}{w(D(z,1))^{\eta}}dA(z)
\leq\int_{\C}\frac{\mu(D(z,1))^{\gamma}}{w(D(z,1))^{\eta}}dA(z).
\end{align*}
To establish the converse estimate, write
$$\Gamma_z:=\{\nu\in\Z^2:D(z,1)\cap Q_1(\nu)\neq\emptyset\}$$
for $z\in\C$. Then it is easy to see $|\Gamma_z|\leq36$ for any $z\in\C$. In fact, choose mutually distinct elements $\nu_1,\dots,\nu_n\in\Gamma_z$, and choose $u_j\in D(z,1)\cap Q_1(\nu_j)$, $1\leq j\leq n$. Then for $u\in D(\nu_j,1/2)$,
$$|u-z|\leq|u-\nu_j|+|\nu_j-u_j|+|u_j-z|<\frac{1}{2}+\frac{\sqrt{2}}{2}+1<3,$$
which implies $\cup_{1\leq j\leq n}D(\nu_j,1/2)\subset D(z,3)$. Since the sets $D(\nu_j,1/2)$ are mutually disjoint, we obtain $n\leq36$. Therefore,
$$\mu(D(z,1))^{\gamma}=\left(\sum_{\nu\in\Gamma_z}\mu\big(D(z,1)\cap Q_1(\nu)\big)\right)^{\gamma}\lesssim
\sum_{\nu\in\Gamma_z}\mu(Q_1(\nu))^{\gamma}.$$
Consequently,
\begin{align*}
\int_{\C}\frac{\mu(D(z,1))^{\gamma}}{w(D(z,1))^{\eta}}dA(z)
&\lesssim\int_{\C}\sum_{\nu\in\Gamma_z}\frac{\mu(Q_1(\nu))^{\gamma}}{w(D(z,1))^{\eta}}dA(z)\\
&=\sum_{\nu\in\Z^2}\int_{\{z\in\C:D(z,1)\cap Q_1(\nu)\neq\emptyset\}}
  \frac{\mu(Q_1(\nu))^{\gamma}}{w(D(z,1))^{\eta}}dA(z)\\
&\leq\sum_{\nu\in\Z^2}\int_{D(\nu,2)}\frac{\mu(Q_1(\nu))^{\gamma}}{w(D(z,1))^{\eta}}dA(z)\\
&\asymp\sum_{\nu\in\Z^2}\frac{\mu(Q_1(\nu))^{\gamma}}{w(Q_1(\nu))^{\eta}},
\end{align*}
which finishes the proof.
\end{proof}
\end{lemma}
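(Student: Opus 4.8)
The plan is to prove the equivalence
$$\int_{\C}\frac{\mu(D(z,1))^{\gamma}}{w(D(z,1))^{\eta}}\,dA(z)\asymp
\sum_{\nu\in\Z^2}\frac{\mu(Q_1(\nu))^{\gamma}}{w(Q_1(\nu))^{\eta}}$$
by splitting $\C$ into the unit squares $\{Q_1(\nu):\nu\in\Z^2\}$ and comparing, on each square, the integrand $z\mapsto\mu(D(z,1))^{\gamma}w(D(z,1))^{-\eta}$ with the lattice term $\mu(Q_1(\nu))^{\gamma}w(Q_1(\nu))^{-\eta}$, using the finite-overlap and comparability properties of $A^{\res}_{\infty}$ weights recorded in Lemma \ref{esti}.

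For the lower bound ``$\gtrsim$'' on the sum side, I would observe that for each $\nu$ the square $Q_1(\nu)$ is contained in $D(z,1)$ whenever $z\in Q_{1/3}(\nu)$ (a small enough sub-square of $Q_1(\nu)$), so on that sub-square $\mu(D(z,1))^{\gamma}\geq\mu(Q_1(\nu))^{\gamma}$ and, by part (2) of Lemma \ref{esti}, $w(D(z,1))\asymp w(Q_1(\nu))$; integrating over the disjoint sub-squares $Q_{1/3}(\nu)$ (each of fixed area $1/9$) and summing gives $\sum_\nu\mu(Q_1(\nu))^{\gamma}w(Q_1(\nu))^{-\eta}\lesssim\int_\C\cdots\,dA$.

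For the reverse bound I would control, for a fixed $z$, the ball-quantity $\mu(D(z,1))$ by a finite sum of the $\mu(Q_1(\nu))$: only those $\nu$ with $D(z,1)\cap Q_1(\nu)\neq\emptyset$ contribute, and there are at most a fixed number (a simple packing argument with the disjoint disks $D(\nu,1/2)\subset D(z,3)$ bounds this by $36$). Hence $\mu(D(z,1))^{\gamma}\lesssim\sum_{\nu:\,D(z,1)\cap Q_1(\nu)\neq\emptyset}\mu(Q_1(\nu))^{\gamma}$, the implicit constant depending only on $\gamma$. Interchanging sum and integral, the region $\{z:D(z,1)\cap Q_1(\nu)\neq\emptyset\}$ is contained in a disk $D(\nu,2)$ of fixed area, and on it $w(D(z,1))\asymp w(Q_1(\nu))$ again by Lemma \ref{esti}(2); this yields $\int_\C\cdots\,dA\lesssim\sum_\nu\mu(Q_1(\nu))^{\gamma}w(Q_1(\nu))^{-\eta}$, completing the proof.

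The only mildly delicate points are bookkeeping: making sure the distance estimates actually put the relevant points within distance $3$ so that Lemma \ref{esti}(2) applies (hence the specific radii $1/3$, $2$, and $3$), and confirming that the constants in $\mu(D(z,1))^{\gamma}\lesssim\sum\mu(Q_1(\nu))^{\gamma}$ depend only on $\gamma$ (via the uniform bound on the number of overlapping squares) and not on $\mu$. There is no real obstacle; once the overlap is bounded uniformly, both inequalities are immediate term-by-term comparisons, so I expect this to be a short and routine argument.
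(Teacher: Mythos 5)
Your proposal is correct and follows essentially the same route as the paper's proof: the same sub-squares $Q_{1/3}(\nu)$ for the lower bound, the same packing argument with the disjoint disks $D(\nu,1/2)\subset D(z,3)$ to bound the overlap by $36$, and the same containment of $\{z:D(z,1)\cap Q_1(\nu)\neq\emptyset\}$ in $D(\nu,2)$ combined with Lemma \ref{esti}(2) for the upper bound. No issues.
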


\section{Proof of Theorem \ref{main}}\label{proof}

In this section, we give the proof of Theorem \ref{main}.

\subsection{The case $1<p<2$}
Before proceeding, we establish several preliminary lemmas.

For $\alpha>0$ and two measurable functions $f,g$ on $\C$, define the pairing $\langle f,g\rangle_{\alpha}$ by
$$\langle f,g\rangle_{\alpha}:=\int_{\C}f(z)\overline{g(z)}e^{-\alpha|z|^2}dA(z).$$
The following lemma characterizes the dual space of $F^p_{\alpha,w}$ for $p>1$, which has its own interest.

\begin{lemma}\label{dual}
Let $\alpha>0$, $p>1$ and $w\in A^{\res}_p$. Then the dual space of $F^p_{\alpha,w}$ can be identified with $F^{p'}_{\alpha,w'}$ under the pairing $\langle \cdot,\cdot\rangle_{\alpha}$, where $w'=w^{-p'/p}$.
\end{lemma}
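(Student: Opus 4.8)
The plan is to show the two inclusions $F^{p'}_{\alpha,w'}\hookrightarrow (F^p_{\alpha,w})^*$ and $(F^p_{\alpha,w})^*\hookrightarrow F^{p'}_{\alpha,w'}$ under the pairing $\langle\cdot,\cdot\rangle_\alpha$, with norm control in both directions. For the first (easy) direction, given $g\in F^{p'}_{\alpha,w'}$, I would estimate $|\langle f,g\rangle_\alpha|$ for $f\in F^p_{\alpha,w}$ by inserting the weight: writing $e^{-\alpha|z|^2}=\bigl(e^{-\frac{p\alpha}{2}|z|^2}w(z)^{1/p}\bigr)\bigl(e^{-\frac{p'\alpha}{2}|z|^2}w'(z)^{1/p'}\bigr)$ — noting that $w^{1/p}\cdot w'^{1/p'}=w^{1/p}w^{-1/p}=1$ — and applying H\"older's inequality with exponents $p,p'$ to get $|\langle f,g\rangle_\alpha|\le\|f\|_{F^p_{\alpha,w}}\|g\|_{F^{p'}_{\alpha,w'}}$. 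So every $g\in F^{p'}_{\alpha,w'}$ induces a bounded functional of norm at most $\|g\|_{F^{p'}_{\alpha,w'}}$; injectivity of $g\mapsto\langle\cdot,g\rangle_\alpha$ follows by testing against reproducing kernels $K_u$, which lie in $F^p_{\alpha,w}$ by Lemma~\ref{test}(1), since $\langle g,K_u\rangle_\alpha$ reproduces the (entire) function $g$ up to the standard Fock normalization.

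For the converse, the key point is to identify the space and use the boundedness of the Fock projection. Given $\Lambda\in(F^p_{\alpha,w})^*$, I would first extend $\Lambda$ by Hahn--Banach to a functional on $L^p_{\alpha,w}$. Since $L^p_{\alpha,w}$ is isometrically a weighted $L^p$ space, its dual is the corresponding weighted $L^{p'}$ space: concretely, there is $h$ with $\Lambda(f)=\langle f,h\rangle_\alpha$ for all $f\in L^p_{\alpha,w}$, where the natural norm of $h$ — after absorbing the Gaussian and the weight correctly — is comparable to $\|h\|_{F^{p'}_{\alpha,w'}}$-type quantity; precisely one checks that $h\, e^{-\alpha|z|^2}$ paired against $L^p_{\alpha,w}$ forces $h\in L^{p'}_{\alpha,w'}$. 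Restricting to $f\in F^p_{\alpha,w}$ and using the reproducing property $f=c_\alpha P_\alpha f$ for entire $f$ (with $c_\alpha=\pi/\alpha$ the normalization), I would write $\Lambda(f)=\langle P_\alpha f\cdot(\text{const}),h\rangle_\alpha=\langle f, P_\alpha h\rangle_\alpha$ by self-adjointness of $P_\alpha$ with respect to $\langle\cdot,\cdot\rangle_\alpha$. Then $g:=P_\alpha h$ is entire, represents $\Lambda$, and the crucial estimate $\|g\|_{F^{p'}_{\alpha,w'}}=\|P_\alpha h\|_{F^{p'}_{\alpha,w'}}\lesssim\|h\|_{L^{p'}_{\alpha,w'}}\lesssim\|\Lambda\|$ is exactly the statement that $P_\alpha$ is bounded on $L^{p'}_{\alpha,w'}$ — which holds because $w\in A^{\res}_p$ implies $w'=w^{-p'/p}\in A^{\res}_{p'}$, by Isralowitz's theorem (quoted in the introduction). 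This last boundedness fact, together with the uniqueness/consistency check that the $g$ so produced really pairs back to $\Lambda$ on all of $F^p_{\alpha,w}$ (not just on kernels), is where the real content lies; the duality for $p=1$ versus $p>1$ is avoided since we only claim $p>1$.

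I expect the main obstacle to be the bookkeeping around the self-adjointness and reproducing identity for $P_\alpha$: one must verify that for entire $f\in F^p_{\alpha,w}$ one indeed has $\frac{\alpha}{\pi}P_\alpha f=f$ in the a.e. sense (this uses that $F^p_{\alpha,w}\subset F^1_{\alpha,\text{loc}}$ pointwise-controlled via Lemma~\ref{pointwise}, so the integral defining $P_\alpha f$ converges and reproduces $f$), and that Fubini is justified in $\langle P_\alpha f,h\rangle_\alpha=\langle f,P_\alpha h\rangle_\alpha$. A clean way to sidestep convergence worries is to first prove the identity on the dense subspace spanned by $\{K_\nu\}$ (using Lemma~\ref{test}(2) for density, or polynomials), establish the norm equivalence there, and then pass to the closure. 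Once the representation $\Lambda(f)=\langle f,g\rangle_\alpha$ with $g=\frac{\alpha}{\pi}P_\alpha h\in F^{p'}_{\alpha,w'}$ and $\|g\|_{F^{p'}_{\alpha,w'}}\asymp\|\Lambda\|$ is in hand, combining with the first direction gives the claimed identification, with the equivalence of norms (not isometry) as stated.
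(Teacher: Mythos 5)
Your argument is correct and follows essentially the same route as the paper: H\"older's inequality for the inclusion $F^{p'}_{\alpha,w'}\subset(F^p_{\alpha,w})^*$, then Hahn--Banach extension to $L^p_{\alpha,w}$, the identification $(L^p_{\alpha,w})^*=L^{p'}_{\alpha,w'}$, the reproducing identity together with self-adjointness of $P_\alpha$ to replace the representing function by $P_\alpha h$, and finally Isralowitz's theorem applied to $w'\in A^{\res}_{p'}$. The only slip is the normalization: with the paper's definition of $P_\alpha$ (which already carries the factor $\alpha/\pi$) the reproducing identity is $P_\alpha f=f$ with no extra constant $\pi/\alpha$, but this does not affect the argument.
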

\begin{proof}
For $f\in F^p_{\alpha,w}$ and $g\in F^{p'}_{\alpha,w'}$, H\"{o}lder's inequality clearly implies
$$|\langle f,g\rangle_{\alpha}|\leq\|f\|_{F^p_{\alpha,w}}\|g\|_{F^{p'}_{\alpha,w'}},$$
which gives $F^{p'}_{\alpha,w'}\subset\left(F^p_{\alpha,w}\right)^*$. Conversely, assume that $\xi\in\left(F^p_{\alpha,w}\right)^*$. By the Hahn--Banach theorem, there exists $\hat{\xi}\in\left(L^p_{\alpha,w}\right)^*$ such that $\hat{\xi}|_{F^p_{\alpha,w}}=\xi$ and $\|\hat{\xi}\|=\|\xi\|$. Noting that $\left(L^p_{\alpha,w}\right)^*=L^{p'}_{\alpha,w'}$ under the pairing $\langle\cdot,\cdot\rangle_{\alpha}$, we may find some $g\in L^{p'}_{\alpha,w'}$ such that $\|g\|_{L^{p'}_{\alpha,w'}}\leq\|\hat{\xi}\|$ and for any $f\in F^p_{\alpha,w}$,
$$\xi(f)=\hat{\xi}(f)=\langle f,g\rangle_{\alpha}.$$
By the proof of \cite[Lemma 3.5]{CFP}, we know that $P_{\alpha}f=f$. Therefore,
$$\xi(f)=\langle f,g\rangle_{\alpha}=\langle P_{\alpha}f,g\rangle_{\alpha}
=\langle f,P_{\alpha}g\rangle_{\alpha},\quad f\in F^p_{\alpha,w}.$$
Since $w'\in A^{\res}_{p'}$ and consequently, $P_{\alpha}:L^{p'}_{\alpha,w'}\to F^{p'}_{\alpha,w'}$ is bounded (see \cite[Theorem 3.1]{Is}), we obtain that $P_{\alpha}g\in F^{p'}_{\alpha,w'}$ with $\|P_{\alpha}g\|_{F^{p'}_{\alpha,w'}}\lesssim\|g\|_{L^{p'}_{\alpha,w'}}\leq\|\xi\|$, which gives $\left(F^p_{\alpha,w}\right)^*\subset F^{p'}_{\alpha,w'}$ and finishes the proof.
\end{proof}

The following lemma gives an equivalent norm for the space $F^p_{\alpha,w}$ via the weight $\hat{w}$.

\begin{lemma}\label{hat}
Let $p,\alpha>0$ and $w\in A^{\res}_{\infty}$. Then $F^p_{\alpha,w}=F^p_{\alpha,\hat{w}}$ with equivalent norms.
\end{lemma}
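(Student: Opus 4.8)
The plan is to establish the two-sided norm equivalence $\|f\|_{F^p_{\alpha,w}}\asymp\|f\|_{F^p_{\alpha,\hat w}}$ for entire $f$, where $\hat w(z)=\int_{Q_1(z)}w\,dA$. Recall that $\hat w\in A^{\res}_\infty$ as well (noted right after \eqref{w'} together with the analogous elementary fact for general $A^{\res}_\infty$ weights), so both quantities are genuine quasi-norms on the respective weighted Fock spaces, and it suffices to compare them on $\h(\C)$.

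First I would unwind the definitions by Fubini. For any measurable nonnegative $g$ on $\C$ one has
$$\int_{\C}g(u)\hat w(u)\,dA(u)=\int_{\C}g(u)\left(\int_{Q_1(u)}w(v)\,dA(v)\right)dA(u)
=\int_{\C}w(v)\left(\int_{Q_1(v)}g(u)\,dA(u)\right)dA(v),$$
using that $v\in Q_1(u)\iff u\in Q_1(v)$. Applying this with $g(u)=|f(u)|^p e^{-\frac{p\alpha}{2}|u|^2}$ reduces everything to comparing, for each fixed $v$, the local average $\int_{Q_1(v)}|f(u)|^p e^{-\frac{p\alpha}{2}|u|^2}\,dA(u)$ against the pointwise value $|f(v)|^p e^{-\frac{p\alpha}{2}|v|^2}$.

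The upper bound $\|f\|^p_{F^p_{\alpha,\hat w}}\lesssim\|f\|^p_{F^p_{\alpha,w}}$ then follows from the submean-value-type estimate of Lemma \ref{pointwise}: for $u\in Q_1(v)$ we have $|u-v|<\sqrt2<3$, so $|f(u)|^p e^{-\frac{p\alpha}{2}|u|^2}\lesssim \frac{1}{w(D(u,2))}\int_{D(u,2)}|f|^p e^{-\frac{p\alpha}{2}|\cdot|^2}w\,dA$, and by Lemma \ref{esti}(2) $w(D(u,2))\asymp w(D(v,1))$ while $D(u,2)\subset D(v,4)$; integrating over $Q_1(v)$ and noting $A(Q_1(v))=1$ gives $\int_{Q_1(v)}|f(u)|^p e^{-\frac{p\alpha}{2}|u|^2}\,dA(u)\lesssim \frac{1}{w(D(v,1))}\int_{D(v,4)}|f|^p e^{-\frac{p\alpha}{2}|\cdot|^2}w\,dA$. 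Plugging into the Fubini identity, interchanging the order of integration back, and using that each point lies in $D(v,4)$ for $v$ in a bounded region together with Lemma \ref{esti}(2) to replace $w(D(v,1))$ by $w$ restricted near the relevant point yields the claim; the bounded-overlap argument here is exactly the kind already carried out in the proof of Lemma \ref{dis}. For the reverse inequality $\|f\|^p_{F^p_{\alpha,w}}\lesssim\|f\|^p_{F^p_{\alpha,\hat w}}$, I would instead apply Lemma \ref{pointwise} with the weight $\hat w$ (legitimate since $\hat w\in A^{\res}_\infty$) to get $|f(v)|^p e^{-\frac{p\alpha}{2}|v|^2}\lesssim \frac{1}{\hat w(D(v,2))}\int_{D(v,2)}|f|^p e^{-\frac{p\alpha}{2}|\cdot|^2}\hat w\,dA$, multiply by $w(v)$, integrate, and use $\hat w(D(v,2))\asymp \hat w(D(v,1))\asymp w(D(v,2))$ from Lemma \ref{esti}(2) (applied to both $w$ and $\hat w$) to absorb the factors, again controlling the overlap of the disks $D(v,2)$.

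The main obstacle is purely bookkeeping rather than conceptual: keeping the Fubini swaps, the radius inflations ($Q_1\subset D(\cdot,1)\subset D(\cdot,2)\subset\cdots$), and the applications of Lemma \ref{esti}(2) consistent so that the weight evaluated at a moving center can always be replaced, up to a constant, by the weight evaluated at a fixed center, and ensuring the resulting integral operators have the bounded-overlap property so that the double integrals collapse back to single ones. No new idea beyond Lemmas \ref{esti} and \ref{pointwise} is needed.
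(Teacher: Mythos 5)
Your proposal is correct and follows essentially the same route as the paper: apply Lemma \ref{pointwise} to the appropriate weight ($w$ or $\hat w$), then use Lemma \ref{esti} and a Fubini/bounded-overlap argument to collapse the resulting double integral, your continuous identity $\int g\hat w\,dA=\int w(v)\bigl(\int_{Q_1(v)}g\bigr)dA(v)$ being just the continuous analogue of the paper's decomposition over unit squares $Q_1(\nu)$. The only bookkeeping caveat is that Lemma \ref{esti}(2) is stated for radii in $[1/4,1]$, so comparisons like $\hat w(D(v,2))\asymp w(D(v,1))$ at radius $2$ need either a trivial finite-covering step or, more simply, only the monotonicity bound $w(D(u,2))\geq w(D(u,1))$, which is all your inequalities actually require.
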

\begin{proof}
We only prove the inclusion $F^p_{\alpha,\hat{w}}\subset F^p_{\alpha,w}$ and the inequality
$$\|f\|_{F^p_{\alpha,w}}\lesssim\|f\|_{F^p_{\alpha,\hat{w}}}.$$
The other direction is similar.

Suppose $f\in F^p_{\alpha,\hat{w}}$. For any $z\in\C$, using Lemma \ref{pointwise} to the weight $\hat{w}$ yields
$$|f(z)|^pe^{-\frac{p\alpha}{2}|z|^2}\lesssim
\frac{1}{\hat{w}(D(z,1))}\int_{D(z,1)}|f(u)|^pe^{-\frac{p\alpha}{2}|u|^2}\hat{w}(u)dA(u).$$
Consequently, by Lemma \ref{esti},
\begin{align*}
\|f\|^p_{F^p_{\alpha,w}}&=\sum_{\nu\in\Z^2}\int_{Q_1(\nu)}|f(z)|^pe^{-\frac{p\alpha}{2}|z|^2}w(z)dA(z)\\
&\lesssim\sum_{\nu\in\Z^2}\int_{Q_1(\nu)}\frac{w(z)}{\hat{w}(D(z,1))}
  \int_{D(z,1)}|f(u)|^pe^{-\frac{p\alpha}{2}|u|^2}\hat{w}(u)dA(u)dA(z)\\
&\leq\sum_{\nu\in\Z^2}\int_{Q_1(\nu)}\frac{w(z)}{\hat{w}(D(z,1))}dA(z)
  \int_{D(\nu,2)}|f(u)|^pe^{-\frac{p\alpha}{2}|u|^2}\hat{w}(u)dA(u)\\
&\lesssim\int_{\C}|f(u)|^pe^{-\frac{p\alpha}{2}|u|^2}\hat{w}(u)dA(u)=\|f\|^p_{F^p_{\alpha,\hat{w}}}.
\end{align*}
The proof is complete.
\end{proof}

We are now ready to characterize the $r$-summability for the Carleson embeddings. The first result concerns the case that the target space is a Hilbert space.

\begin{proposition}\label{2sum}
Let $\alpha>0$, $1<p<2$, $w\in A^{\res}_p$, and let $\mu$ be a positive Borel measure on $\C$. The following assertions are equivalent.
\begin{enumerate}[(a)]
  \item $I_d:F^p_{\alpha,w}\to L^2_{\alpha}(\mu)$ is $2$-summing.
  \item $\int_{\C}\left(\int_{\C}|e^{\alpha \bar{u}z}|^2e^{-\alpha|z|^2}d\mu(z)\right)^{p'/2}
      e^{-\frac{p'\alpha}{2}|u|^2}w(u)^{-p'/p}dA(u)<\infty.$
  \item $P_{\alpha}:L^p_{\alpha,w}\to L^2_{\alpha}(\mu)$ is $1$-summing.
  \item The sequence
  $$\lambda:=\left\{\frac{\mu(Q_1(\nu))}{w(Q_1(\nu))^{2/p}}\right\}_{\nu\in\Z^2}$$
  belongs to $l^{p'/2}(\Z^2)$.
\end{enumerate}

Moreover,
\begin{align*}
\pi_2\big(I_d:F^p_{\alpha,w}&\to L^2_{\alpha}(\mu)\big)\asymp\|\lambda\|^{1/2}_{l^{p'/2}(\Z^2)}\asymp\\
&\left(\int_{\C}\left(\int_{\C}|e^{\alpha \bar{u}z}|^2e^{-\alpha|z|^2}d\mu(z)\right)^{p'/2}
      e^{-\frac{p'\alpha}{2}|u|^2}w(u)^{-p'/p}dA(u)\right)^{1/p'}.
\end{align*}
\end{proposition}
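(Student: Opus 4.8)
The plan is to prove the chain of equivalences (a)$\Rightarrow$(b)$\Rightarrow$(c)$\Rightarrow$(a) together with (b)$\Leftrightarrow$(d), and to extract the norm estimates along the way. Since the target is a Hilbert space, the key tool is the classical fact (special case of Theorem~\ref{sum-T} / Pietsch domination, or the Hilbert-space identity $\pi_2(T)^2 = \int \|Te_j\|^2$ in terms of an orthonormal-type expansion) that $2$-summing operators into a Hilbert space admit a very concrete description. Concretely, for $T\colon X\to H$ with $H$ a Hilbert space, $\pi_2(T)$ is governed by $\sup$ over finite families of the quantity $\sum_j\|Tx_j\|_H^2$ against $\sup_{x^*\in B_{X^*}}\sum_j|x^*(x_j)|^2$; dualizing via Lemma~\ref{dual}, which identifies $(F^p_{\alpha,w})^*$ with $F^{p'}_{\alpha,w'}$ under $\langle\cdot,\cdot\rangle_\alpha$, turns the right-hand side into an $l^{p'}$-type norm on the coefficient side, while the left-hand side, since $I_d$ maps into $L^2_\alpha(\mu)$, becomes an $L^2(\mu)$ integral.

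First I would make the identification $(F^p_{\alpha,w})^* = F^{p'}_{\alpha,w'}$ explicit on reproducing kernels: under $\langle\cdot,\cdot\rangle_\alpha$, point evaluation at $u$ corresponds (up to the kernel normalization) to $K_u = e^{\alpha\bar u z}$, and $\langle f, K_u\rangle_\alpha = f(u)$ for $f\in F^2_\alpha$, with the weighted pairing extending this. The crucial computation is that, for the identity embedding into the Hilbert space $L^2_\alpha(\mu)$, one has the representation
$$\pi_2(I_d)^2 \asymp \int_{\C}\Big\|I_d\big(K_u/\|K_u\|_{(F^p_{\alpha,w})^*}\big)\Big\|^2_{L^2_\alpha(\mu)}\, d\sigma(u)$$
for the appropriate reference measure; more precisely, the standard lower bound for $\pi_2$ of a Hilbert-valued map tested against the ``continuous orthonormal family'' of normalized kernels, combined with the matching upper bound coming from writing $I_d$ as a composition through $P_\alpha$, yields that $\pi_2(I_d)^2$ is comparable to $\int_{\C}\big(\int_{\C}|e^{\alpha\bar u z}|^2 e^{-\alpha|z|^2}\,d\mu(z)\big)^{p'/2} e^{-\frac{p'\alpha}{2}|u|^2} w(u)^{-p'/p}\,dA(u)$. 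That is precisely condition (b), with the stated norm equivalence. For (b)$\Leftrightarrow$(d), I would discretize: the inner integral $\int_{\C}|e^{\alpha\bar u z}|^2 e^{-\alpha|z|^2}\,d\mu(z) = \int_{\C} e^{-\alpha|z-u|^2} e^{\alpha|u|^2}\,d\mu(z)$ is, after removing the factor $e^{\alpha|u|^2}$, essentially the Berezin-type average of $\mu$, comparable to $\mu(D(u,1))\,e^{\alpha|u|^2}$ up to a Gaussian tail that is summable; feeding this into the outer integral and using Lemma~\ref{test}(1) (which gives $\|K_u\|^p_{F^p_{\alpha,w}}\asymp e^{\frac{p\alpha}{2}|u|^2}w(D(u,1))$, hence $\|K_u\|^{p'}_{(F^p_{\alpha,w})^*}\asymp e^{\frac{p'\alpha}{2}|u|^2}w'(D(u,1)) \asymp e^{\frac{p'\alpha}{2}|u|^2}w(D(u,1))^{-p'/p}$ by \eqref{w'}) reduces (b) to $\int_{\C}\big(\mu(D(u,1))/w(D(u,1))^{2/p}\big)^{p'/2}\,dA(u)<\infty$, which by Lemma~\ref{dis} (with $\gamma=p'/2$, $\eta=p'/p$) is exactly $\|\lambda\|_{l^{p'/2}(\Z^2)}<\infty$, and the norms match.

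For (c)$\Rightarrow$(a): since $P_\alpha f = f$ for $f\in F^p_{\alpha,w}$ by \cite[Lemma 3.5]{CFP} (as used in Lemma~\ref{dual}), the embedding $I_d\colon F^p_{\alpha,w}\to L^2_\alpha(\mu)$ factors as $I_d = P_\alpha\circ \iota$ where $\iota\colon F^p_{\alpha,w}\hookrightarrow L^p_{\alpha,w}$ is the inclusion with norm one; by the ideal property $\pi_2(I_d)\le \pi_2(P_\alpha\colon L^p_{\alpha,w}\to L^2_\alpha(\mu)) \le \pi_1(P_\alpha)$ using monotonicity of $\pi_r$ in $r$. For (a)$\Rightarrow$(b): use the lower-bound half of the Hilbert-valued $\pi_2$ representation, testing $I_d$ on the normalized kernels $K_u/\|K_u\|$; because these form (after normalization by the dual norm) a family for which $\sup_{x^*\in B_{X^*}}\int |x^*(K_u/\|K_u\|)|^2\,d\sigma(u)$ stays bounded — this is where one invokes the fact that the Fock kernels behave like an orthonormal system under the weighted pairing, made rigorous via Lemma~\ref{esti} and the off-diagonal decay $|\langle K_u,K_v\rangle_\alpha| e^{-\frac{\alpha}{2}(|u|^2+|v|^2)} = e^{-\frac{\alpha}{2}|u-v|^2}$ — Theorem~\ref{sum-T} forces $\int_{\C}\|I_d(K_u/\|K_u\|)\|^2_{L^2_\alpha(\mu)}\,d\sigma(u)\lesssim \pi_2(I_d)^2$, which unwinds to (b). Finally (b)$\Rightarrow$(c) follows by running the same factorization/order-boundedness estimate with $P_\alpha$ in place of $I_d$ and recognizing the integral in (b) as $\big(\pi_1(P_\alpha\colon L^p_{\alpha,w}\to L^2_\alpha(\mu))\big)^{p'}$ up to constants, e.g.\ via Lemma~\ref{order} applied to an order-bounding function built from the kernel integral.

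The main obstacle I anticipate is making the continuous ``orthonormal family'' argument rigorous: passing from the discrete definition of $2$-summing (finite sequences) to the integral representation $\pi_2(I_d)^2\asymp\int_\C\|I_d(\widetilde K_u)\|^2\,d\sigma(u)$ requires either a careful Riemann-sum/lattice approximation (replacing the integral over $\C$ by sums over $\Z^2$, controlling the cross terms via the Gaussian decay in Lemma~\ref{esti}) or invoking Theorem~\ref{sum-T} with the measure space $(\C, dA)$ and the vector-valued function $u\mapsto \widetilde K_u$, and then checking that $\sup_{\xi\in B_{(F^p_{\alpha,w})^*}}\int_\C|\xi(\widetilde K_u)|^2\,dA(u)$ is finite and bounded — the latter is essentially a reproducing-kernel/Schur-test estimate but must be handled with the non-radial weight $w\in A^{\res}_p$, for which one leans on Lemma~\ref{esti}(2), Lemma~\ref{hat}, and \eqref{w'} to replace $w$ locally by its averages. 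Everything else is bookkeeping with Hölder's inequality, the ideal and monotonicity properties of $\pi_r$, and the discretization Lemma~\ref{dis}.
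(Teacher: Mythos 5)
Your overall architecture --- the cycle (a)$\Rightarrow$(b)$\Rightarrow$(c)$\Rightarrow$(a) together with (b)$\Leftrightarrow$(d), with (c)$\Rightarrow$(a) obtained from $P_\alpha f=f$ and the ideal property, and (b)$\Leftrightarrow$(d) obtained by Gaussian discretization, Lemma \ref{esti} and Lemma \ref{dis} --- is the same as the paper's, and those two pieces of your sketch are sound. The two central implications, however, have genuine gaps. For (a)$\Rightarrow$(b), the exponent-$2$ test against normalized kernels cannot produce condition (b). Your claimed relation $\pi_2(I_d)^2\asymp\int_{\C}\bigl(\int_{\C}|e^{\alpha\bar uz}|^2e^{-\alpha|z|^2}d\mu(z)\bigr)^{p'/2}e^{-\frac{p'\alpha}{2}|u|^2}w(u)^{-p'/p}dA(u)$ already fails on homogeneity grounds: replacing $\mu$ by $t\mu$ scales the left side by $t$ and the right side by $t^{p'/2}$, so the only possible relation is $\pi_2(I_d)^{p'}\asymp(\cdots)$, which no quadratic test can give. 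More seriously, the hypothesis you need in Theorem \ref{sum-T} with $r=2$, namely $\sup_{\xi\in B_{(F^p_{\alpha,w})^*}}\int_{\C}|\xi(K_u/\|K_u\|)|^2d\sigma(u)<\infty$, becomes (via Lemma \ref{dual}, which turns $\xi(K_u)$ into $\frac{\pi}{\alpha}\overline{g_\xi(u)}$) an embedding $F^{p'}_{\alpha,w'}\hookrightarrow L^2(\tilde\sigma)$ with $p'>2$, and for any reference measure that would make the left-hand side match (b) this embedding is false (and the left-hand side is linear in $\mu$ while (b) is of degree $p'/2$, so no fixed $\sigma$ can reconcile them). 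The paper's fix is short but essential: since $p'>2$, monotonicity gives $\pi_{p'}(I_d)\le\pi_2(I_d)$, and Theorem \ref{sum-T} is applied with exponent $p'$ and the measure $e^{-\frac{p'\alpha}{2}|u|^2}w'(u)dA(u)$, for which the supremum is exactly $\sup_\xi\|g_\xi\|^{p'}_{F^{p'}_{\alpha,w'}}\lesssim1$ and the left side is exactly the integral in (b).

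For (b)$\Rightarrow$(c), running ``the same order-boundedness estimate with $P_\alpha$ in place of $I_d$'' does not reach the stated conclusion. Order boundedness of $P_\alpha:L^p_{\alpha,w}\to L^2_\alpha(\mu)$ would, by Lemma \ref{order}, only give $2$-summability (the target is an $L^2$-space), and computing $\sup_{f\in B_{L^p_{\alpha,w}}}|P_\alpha f|$ shows that this order boundedness is equivalent to $\lambda\in l^1(\Z^2)$, strictly stronger than (d). The paper instead introduces $R_\alpha f(u)=\frac{\alpha}{\pi}\int_{\C}f(z)e^{\alpha u\bar z}e^{-\alpha|z|^2}d\mu(z)$, notes by Cauchy--Schwarz that $\sup_{f\in B_{L^2_\alpha(\mu)}}|R_\alpha f(u)|$ is dominated by the square root of the inner integral in (b), so that (b) says precisely that $R_\alpha:L^2_\alpha(\mu)\to L^{p'}_{\alpha,w'}$ is order bounded, hence $p'$-summing by Lemma \ref{order} (here the target is an $L^{p'}$-space, which is why the exponent comes out right); then $R_\alpha=P_\alpha^*$ and the duality theorem \cite[Theorem 2.21]{DJT} converts $p'$-summability of the adjoint into $1$-summability of $P_\alpha$. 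Without the adjoint operator $R_\alpha$ and that duality theorem you cannot obtain the $1$-summing assertion in (c), and hence cannot close the cycle as stated.
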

\begin{proof}
We first establish the chain of implications:
$$\mathrm{(a)}\Longrightarrow\mathrm{(b)}\Longrightarrow\mathrm{(c)}\Longrightarrow\mathrm{(a)}.$$
To this end, assume that (a) holds. Then $I_d:F^p_{\alpha,w}\to L^2_{\alpha}(\mu)$ is $p'$-summing since $1<p<2$. For any $u\in\C$, consider the reproducing kernel $K_u(z)=e^{\alpha\bar{u}z}$, which belongs to $F^p_{\alpha,w}$ thanks to Lemma \ref{test}. By Lemma \ref{dual}, for any $\xi\in B_{(F^p_{\alpha,w})^*}$, there exists $g_{\xi}\in F^{p'}_{\alpha,w'}$ with $\|g_{\xi}\|_{F^{p'}_{\alpha,w'}}\lesssim1$ such that
$$\xi(K_u)=\langle K_u,g_{\xi}\rangle_{\alpha}=\frac{\pi}{\alpha}\overline{P_{\alpha}(g_{\xi})(u)}
=\frac{\pi}{\alpha}\overline{g_{\xi}(u)},\quad u\in\C.$$
Therefore, Theorem \ref{sum-T} implies that
\begin{align*}
&\left(\int_{\C}\|K_u\|^{p'}_{L^2_{\alpha}(\mu)}e^{-\frac{p'\alpha}{2}|u|^2}w'(u)dA(u)\right)^{1/p'}\\
&\ \leq\pi_{p'}\left(I_d:F^p_{\alpha,w}\to L^2_{\alpha}(\mu)\right)\cdot
  \sup_{\xi\in B_{(F^p_{\alpha,w})^*}}
  \left(\int_{\C}|\xi(K_u)|^{p'}e^{-\frac{p'\alpha}{2}|u|^2}w'(u)dA(u)\right)^{1/p'}\\
&\ \lesssim\pi_2\left(I_d:F^p_{\alpha,w}\to L^2_{\alpha}(\mu)\right)\cdot
  \sup_{\xi\in B_{(F^p_{\alpha,w})^*}}
  \left(\int_{\C}|g_{\xi}(u)|^{p'}e^{-\frac{p'\alpha}{2}|u|^2}w'(u)dA(u)\right)^{1/p'}\\
&\ \lesssim\pi_2\left(I_d:F^p_{\alpha,w}\to L^2_{\alpha}(\mu)\right).
\end{align*}
Therefore, we obtain
\begin{align*}
&\left(\int_{\C}\left(\int_{\C}|e^{\alpha \bar{u}z}|^2e^{-\alpha|z|^2}d\mu(z)\right)^{p'/2}
      e^{-\frac{p'\alpha}{2}|u|^2}w(u)^{-p'/p}dA(u)\right)^{1/p'}\\
      &\qquad\qquad\qquad\qquad\qquad\qquad\lesssim\pi_2\left(I_d:F^p_{\alpha,w}\to L^2_{\alpha}(\mu)\right),
\end{align*}
that is, (b) holds.

Assume (b) holds. For any $f\in L^2_{\alpha}(\mu)$, define
$$R_{\alpha}f(u):=\frac{\alpha}{\pi}\int_{\C}f(z)e^{\alpha u\bar{z}}e^{-\alpha|z|^2}d\mu(z),\quad u\in\C.$$
We claim that $R_{\alpha}:L^2_{\alpha}(\mu)\to L^{p'}_{\alpha,w'}$ is order bounded. In fact, for any $f\in B_{L^2_{\alpha}(\mu)}$, the Cauchy--Schwarz inequality yields
$$|R_{\alpha}f(u)|\lesssim\left(\int_{\C}|e^{\alpha\bar{u}z}|^2e^{-\alpha|z|^2}d\mu(z)\right)^{1/2},\quad u\in\C.$$
Hence by (b), $\sup_{f\in B_{L^2_{\alpha}(\mu)}}|R_{\alpha}f|\in L^{p'}_{\alpha,w'}$, which implies that $R_{\alpha}:L^2_{\alpha}(\mu)\to L^{p'}_{\alpha,w'}$ is order bounded. Consequently, $R_{\alpha}:L^2_{\alpha}(\mu)\to L^{p'}_{\alpha,w'}$ is $p'$-summing due to Lemma \ref{order}. Moreover,
\begin{align*}
\pi_{p'}\big(R_{\alpha}:L^2_{\alpha}(\mu)&\to L^{p'}_{\alpha,w'}\big)\lesssim\\
&\left(\int_{\C}\left(\int_{\C}|e^{\alpha \bar{u}z}|^2e^{-\alpha|z|^2}d\mu(z)\right)^{p'/2}
      e^{-\frac{p'\alpha}{2}|u|^2}w(u)^{-p'/p}dA(u)\right)^{1/p'}.
\end{align*}
On the other hand, it is easy to verify that $R^*_{\alpha}=P_{\alpha}$ under the pairings $\langle\cdot,\cdot\rangle_{\alpha}$ and $\langle\cdot,\cdot\rangle_{L^2_{\alpha}(\mu)}$. Hence we know that $P^*_{\alpha}=R^{**}_{\alpha}=R_{\alpha}$ is $p'$-summing, which, in conjunction with \cite[Theorem 2.21]{DJT}, implies that $P_{\alpha}:L^p_{\alpha,w}\to L^2_{\alpha}(\mu)$ is $1$-summing with
\begin{align*}
\pi_1\big(P_{\alpha}:L^p_{\alpha,w}&\to L^2_{\alpha}(\mu)\big)\lesssim
 \pi_{p'}\big(R_{\alpha}:L^2_{\alpha}(\mu)\to L^{p'}_{\alpha,w'}\big)\lesssim\\
&\left(\int_{\C}\left(\int_{\C}|e^{\alpha \bar{u}z}|^2e^{-\alpha|z|^2}d\mu(z)\right)^{p'/2}
      e^{-\frac{p'\alpha}{2}|u|^2}w(u)^{-p'/p}dA(u)\right)^{1/p'}.
\end{align*}
Therefore, (c) holds.

Assume that (c) holds. It is known that for any $f\in F^p_{\alpha,w}$, $P_{\alpha}f=f$ (see the proof of \cite[Lemma 3.5]{CFP}). Consequently, we have the following commutative diagram:
\begin{equation*}
\xymatrix{
  F^p_{\alpha,w} \ar[d]_{I_d} \ar[dr]^{I_d}        \\
  L^p_{\alpha,w} \ar[r]_{P_{\alpha}}  & L^2_{\alpha}(\mu).    }
\end{equation*}
Therefore, using the ideal property of $1$-summing operators and (c), we obtain that $I_d:F^p_{\alpha,w}\to L^2_{\alpha}(\mu)$ is $1$-summing, and
\begin{align*}
\pi_1\big(I_d:F^p_{\alpha,w}\to L^2_{\alpha}(\mu)\big)\leq
  \pi_1\big(P_{\alpha}:L^p_{\alpha,w}\to L^2_{\alpha}(\mu)\big),
\end{align*}
which gives (a).

We next establish the equivalence (b)$\Longleftrightarrow$(d). Before proceeding, we apply Lemma \ref{esti} to find a constant $C>0$ such that for any $\nu,\nu'\in\mathbb{Z}^2$,
\begin{equation}\label{wQ}
C^{-|\nu-\nu'|}\leq\left(\frac{w(Q_1(\nu'))}{w(Q_1(\nu))}\right)^{2/p}\leq C^{|\nu-\nu'|}.
\end{equation}
Then, using \eqref{w'}, \eqref{wQ} and H\"{o}lder's inequality, we obtain that
\begin{align*}
&\int_{\C}\left(\int_{\C}|e^{\alpha \bar{u}z}|^2e^{-\alpha|z|^2}d\mu(z)\right)^{p'/2}
  e^{-\frac{p'\alpha}{2}|u|^2}w(u)^{-p'/p}dA(u)\\
&\ =\int_{\C}\left(\int_{\C}e^{-\alpha|z-u|^2}d\mu(z)\right)^{p'/2}w(u)^{-p'/p}dA(u)\\
&\ \lesssim\sum_{\nu\in\mathbb{Z}^2}\left(\sum_{\nu'\in\mathbb{Z}^2}e^{-\frac{\alpha}{2}|\nu-\nu'|^2}\mu(Q_1(\nu'))\right)^{p'/2}
  \cdot w'(Q_1(\nu))\\
&\ \asymp\sum_{\nu\in\mathbb{Z}^2}\left(\sum_{\nu'\in\mathbb{Z}^2}e^{-\frac{\alpha}{2}|\nu-\nu'|^2}\mu(Q_1(\nu'))\right)^{p'/2}
  \cdot w(Q_1(\nu))^{-p'/p}\\
&\ \leq\sum_{\nu\in\mathbb{Z}^2}\left(\sum_{\nu'\in\mathbb{Z}^2}e^{-\frac{\alpha}{2}|\nu-\nu'|^2}C^{|\nu-\nu'|}
  \frac{\mu(Q_1(\nu'))}{w(Q_1(\nu'))^{2/p}}\right)^{p'/2}\\
&\ \lesssim\sum_{\nu\in\mathbb{Z}^2}\sum_{\nu'\in\mathbb{Z}^2}e^{-\frac{p'\alpha}{8}|\nu-\nu'|^2}
  \left(\frac{\mu(Q_1(\nu'))}{w(Q_1(\nu'))^{2/p}}\right)^{p'/2}\\
&\ \asymp\sum_{\nu'\in\mathbb{Z}^2}\left(\frac{\mu(Q_1(\nu'))}{w(Q_1(\nu'))^{2/p}}\right)^{p'/2}.
\end{align*}
Conversely, noting that $p'/2>1$, we have that
\begin{align*}
&\int_{\C}\left(\int_{\C}|e^{\alpha \bar{u}z}|^2e^{-\alpha|z|^2}d\mu(z)\right)^{p'/2}
e^{-\frac{p'\alpha}{2}|u|^2}w(u)^{-p'/p}dA(u)\\
&\ \gtrsim\sum_{\nu\in\mathbb{Z}^2}\left(\sum_{\nu'\in\mathbb{Z}^2}e^{-2\alpha|\nu-\nu'|^2}\mu(Q_1(\nu'))\right)^{p'/2}
  \cdot w'(Q_1(\nu))\\
&\ \gtrsim\sum_{\nu\in\mathbb{Z}^2}\left(\sum_{\nu'\in\mathbb{Z}^2}e^{-2\alpha|\nu-\nu'|^2}C^{-|\nu-\nu'|}
  \frac{\mu(Q_1(\nu'))}{w(Q_1(\nu'))^{2/p}}\right)^{p'/2}\\
&\ \geq\sum_{\nu\in\mathbb{Z}^2}\sum_{\nu'\in\mathbb{Z}^2}e^{-p'\alpha|\nu-\nu'|^2}C^{-\frac{p'}{2}|\nu-\nu'|}
  \left(\frac{\mu(Q_1(\nu'))}{w(Q_1(\nu'))^{2/p}}\right)^{p'/2}\\
&\ \asymp\sum_{\nu'\in\mathbb{Z}^2}\left(\frac{\mu(Q_1(\nu'))}{w(Q_1(\nu'))^{2/p}}\right)^{p'/2}.
\end{align*}
Therefore, the equivalence (b)$\Longleftrightarrow$(d) follows, and
$$\int_{\C}\left(\int_{\C}|e^{\alpha \bar{u}z}|^2e^{-\alpha|z|^2}d\mu(z)\right)^{p'/2}
e^{-\frac{p'\alpha}{2}|u|^2}w(u)^{-p'/p}dA(u)\asymp\|\lambda\|^{p'/2}_{l^{p'/2}(\mathbb{Z}^2)}.$$
The proof is finished.
\end{proof}

For $\alpha>0$, the Berezin transform $B_{\alpha}$ is defined for $f\in L^1_{\mathrm{loc}}(\C,dA)$ by
$$B_{\alpha}f(z):=\int_{\C}f(u)e^{-\alpha|z-u|^2}dA(u),\quad z\in\C.$$
The main result of this subsection is as follows, which characterizes the $r$-summing embeddings $I_d:F^p_{\alpha,w}\to L^q_{\alpha}(\mu)$ for all $1<p<2$, $1\leq q\leq2$ and $r\geq1$.

\begin{theorem}\label{<}
Let $\alpha>0$, $1<p<2$, $1\leq q\leq2$, $w\in A^{\res}_p$, and let $\mu$ be a positive Borel measure on $\C$. The following assertions are equivalent.
\begin{enumerate}[(a)]
  \item $I_d:F^p_{\alpha,w}\to L^q_{\alpha}(\mu)$ is $2$-summing.
  \item $I_d:F^p_{\alpha,\hat{w}}\to L^q_{\alpha}(\mu)$ is $2$-summing.
  \item $I_d:F^p_{\alpha,w}\to L^q_{\alpha}(\mu)$ is $r$-summing for any $r\geq1$.
  \item $B_{\alpha}:L^{\frac{p}{2-p}}(\hat{w}^{2/(2-p)}dA)\to L^{q/2}(\mu)$ is bounded.
  \item The sequence
  $$\lambda:=
  \left\{\frac{\mu(Q_1(\nu))}{w(Q_1(\nu))^{q/p}}\right\}_{\nu\in\Z^2}$$
  belongs to $l^s(\Z^2)$, where $s=2p/(2p-2q+pq)$.
\end{enumerate}

Moreover, if one of these conditions holds, then
\begin{equation}\label{pi2}
\pi_2\big(I_d:F^p_{\alpha,w}\to L^q_{\alpha}(\mu)\big)\asymp\|\lambda\|^{1/q}_{l^s(\Z^2)}.
\end{equation}
\end{theorem}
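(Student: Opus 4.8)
The plan is to establish the cycle of implications (a)$\Rightarrow$(e)$\Rightarrow$(d)$\Rightarrow$(c)$\Rightarrow$(b)$\Rightarrow$(a), with the quantitative estimate \eqref{pi2} extracted along the way. The key observation making everything work is the \emph{reproducing/translation trick}: although the target space $L^q_\alpha(\mu)$ is not a Hilbert space when $q<2$, one can factor $I_d$ through a Hilbert-space-valued embedding. Concretely, I would introduce for each $z\in\C$ the ``point evaluation'' of the kernel and observe that $I_d:F^p_{\alpha,w}\to L^q_\alpha(\mu)$ being $2$-summing is, by Theorem \ref{sum-T}, equivalent to an integral inequality $\int\|I_d f_u\|^2_{L^q_\alpha(\mu)}\,d\sigma(u)\lesssim\sup_\xi\int|\xi(f_u)|^2\,d\sigma(u)$. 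Testing this on normalized kernels $K_u/\|K_u\|_{F^p_{\alpha,w}}$ and using Lemma \ref{dual} to identify $\xi(K_u)$ with $\overline{g_\xi(u)}$ (exactly as in Proposition \ref{2sum}) produces an order-boundedness condition for the adjoint-type operator, which after discretizing via Lemma \ref{dis} and the weight estimates \eqref{w'}, \eqref{wQ}, Lemma \ref{esti} becomes the $l^s$-summability of $\lambda$ in (e). The bookkeeping with the exponent $s=2p/(2p-2q+pq)$ is just a matter of tracking how the $\mu(Q_1(\nu))$ and $w(Q_1(\nu))$ powers combine under Hölder; I would note that when $q=2$ this reduces to $s=p'/2$, recovering Proposition \ref{2sum}.

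For (e)$\Rightarrow$(d) I would unwind the Berezin transform: the boundedness of $B_\alpha:L^{p/(2-p)}(\hat w^{2/(2-p)}dA)\to L^{q/2}(\mu)$ is, by duality between $L^{p/(2-p)}$ and its conjugate together with the self-adjointness of the Gaussian kernel $e^{-\alpha|z-u|^2}$, equivalent to a testing condition against the dual weight; discretizing the convolution with the rapidly decaying Gaussian (the same $\sum_{\nu'}e^{-c|\nu-\nu'|^2}$ manipulation used twice already in the proof of Proposition \ref{2sum}) collapses it to precisely the statement $\lambda\in l^s(\Z^2)$, with matching norms. For (d)$\Rightarrow$(c) I would use (d) to produce order-boundedness of a dominating operator and invoke Lemma \ref{order} to get $2$-summability first; then, because $I_d$ factors through the Fock projection $P_\alpha$ on $F^p_{\alpha,w}$ (the identity $P_\alpha f=f$ for $f\in F^p_{\alpha,w}$ from \cite[Lemma 3.5]{CFP}), and $P_\alpha$ is a \emph{positive} operator on the lattice $L^p_{\alpha,w}$, an order-boundedness argument upgrades this to $r$-summability for \emph{all} $r\ge1$ via Lemma \ref{order} applied at the relevant exponent, exactly mirroring the (b)$\Rightarrow$(c)$\Rightarrow$(a) chain of Proposition \ref{2sum}. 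The implication (c)$\Rightarrow$(b) is immediate from $F^p_{\alpha,w}=F^p_{\alpha,\hat w}$ with equivalent norms (Lemma \ref{hat}) and the ideal property, and (b)$\Rightarrow$(a) is the same equivalence of spaces read backwards, using that $\widehat{\hat w}\asymp\hat w$ so the sequence $\lambda$ is unchanged up to constants.

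I expect the main obstacle to be the case $1\le q<2$ strictly, i.e.\ getting a grip on the non-Hilbertian target. The device to handle it is to reduce to $q=2$ by a ``translation'' argument: writing $I_d:F^p_{\alpha,w}\to L^q_\alpha(\mu)$ as a composition of $I_d:F^p_{\alpha,w}\to L^2_\alpha(\widetilde\mu)$ for a suitably modified measure $\widetilde\mu$ (absorbing a factor that makes the exponents match) with a pointwise multiplier/inclusion $L^2_\alpha(\widetilde\mu)\hookrightarrow L^q_\alpha(\mu)$, one can apply Proposition \ref{2sum} to the $q=2$ piece and handle the inclusion by Hölder. Keeping the constants tracked through this reduction, so that the final $\asymp$ in \eqref{pi2} comes out with the correct power $1/q$ rather than $1/p'$ or $1/2$, is the delicate part; the identity $\frac{1}{s}=\frac{1}{q}\cdot\frac{2p-2q+pq}{2p}$ together with $\frac{1}{s}\cdot q = \frac{p'}{2}$ when $q=2$ is the consistency check I would run. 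Everything else is a recombination of the three tools already on the table — Theorem \ref{sum-T}, Lemma \ref{order}, and the discretization Lemma \ref{dis} — applied in the pattern already rehearsed in Proposition \ref{2sum}.
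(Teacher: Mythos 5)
Your skeleton is right in outline, but two of its load-bearing steps fail for $q<2$, which is exactly the hard case. First, the implication (a)$\Rightarrow$(e) by testing the $2$-summing inequality of Theorem \ref{sum-T} on normalized kernels does not produce the exponent $s$. Running your test carefully: with $F(\nu)=c_\nu K_\nu/\|K_\nu\|_{F^p_{\alpha,w}}$ one has $\|K_\nu\|^2_{L^q_\alpha(\mu)}\gtrsim e^{\alpha|\nu|^2}\mu(Q_1(\nu))^{2/q}$, and after the Lemma \ref{dual} identification and H\"older on the dual side the best you can extract is $\{\mu(Q_1(\nu))^{2/q}w(Q_1(\nu))^{-2/p}\}\in l^{p'/2}$, i.e.\ $\lambda\in l^{p'/q}$. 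A short computation shows $p'/q=s$ only when $q=2$; for $q<2$ one has $s<p'/q$, so $l^s\subsetneq l^{p'/q}$ and the kernel test yields a strictly weaker necessary condition than (e). This is precisely why the paper does not argue this way: for $q<2$ it invokes the change-of-density identity $\pi_2(I_d:F^p_{\alpha,\hat w}\to L^q(\mu_{\alpha,q}))^2\asymp\inf_{F\in B^+_{L^{2t}(\mu_{\alpha,q})}}\pi_2(I_d:F^p_{\alpha,\hat w}\to L^2(F^{-2}d\mu_{\alpha,q}))^2$ from \cite[Lemma 7.6]{LR18} (whose ``$\gtrsim$'' direction is a nontrivial Pietsch/Maurey factorization, not the H\"older inclusion you describe), then exchanges the resulting $\inf_f\sup_g$ via Ky Fan's lemma and evaluates the inner infimum with \cite[Lemma 7.5]{LR18} to land exactly on $\|B_\alpha\|_{L^{p/(2-p)}(\hat w^{2/(2-p)}dA)\to L^{q/2}(\mu)}$. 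Your ``single modified measure $\widetilde\mu$'' reduction only gives the easy upper bound for $\pi_2$; without the minimax machinery you cannot close the cycle back from (d)/(e) to (a) with the sharp exponent.

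Second, your step (d)$\Rightarrow$(c) via order boundedness and Lemma \ref{order} cannot work: order boundedness of $I_d:F^p_{\alpha,w}\to L^q_\alpha(\mu)$ is equivalent (as in Proposition \ref{order-bdd}) to $\sum_\nu\mu(Q_1(\nu))/w(Q_1(\nu))^{q/p}<\infty$, i.e.\ $\lambda\in l^1$, which is strictly stronger than $\lambda\in l^s$ since $s>1$. The paper gets (a)$\Leftrightarrow$(c) for free from a different source that your proposal never mentions: both $F^p_{\alpha,w}$ and $L^q_\alpha(\mu)$ have cotype $2$ because $p,q\le2$, so $\Pi_r=\Pi_1$ between them for all $r>1$ by \cite[Corollary 11.16]{DJT}. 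You should replace your (d)$\Rightarrow$(c) leg by this cotype argument (making (c) equivalent to (a) directly) and route (d)$\Leftrightarrow$(b) through the quantitative identity $\pi_2^2\asymp\|B_\alpha\|$ above. Your (d)$\Leftrightarrow$(e) discretization of the Berezin transform (Khinchine/Rademacher in one direction, H\"older plus Gaussian decay in the other) and the (a)$\Leftrightarrow$(b) step via Lemma \ref{hat} do match the paper and are fine, as is your consistency check $s=p'/2$ at $q=2$.
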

\begin{proof}
It follows from Lemma \ref{hat} that the conditions (a) and (b) are equivalent, and
\begin{equation}\label{2hat}
\pi_2\big(I_d:F^p_{\alpha,w}\to L^q_{\alpha}(\mu)\big)\asymp
\pi_2\big(I_d:F^p_{\alpha,\hat{w}}\to L^q_{\alpha}(\mu)\big).
\end{equation}
Since $p,q\leq2$ and consequently, $F^p_{\alpha,w}$ and $L^q_{\alpha}(\mu)$ both have cotype $2$, we know that $\Pi_r\big(F^p_{\alpha,w},L^q_{\alpha}(\mu)\big)=\Pi_1\big(F^p_{\alpha,w},L^q_{\alpha}(\mu)\big)$ for any $r>1$ (see \cite[Corollary 11.16]{DJT}). Hence (a) and (c) are equivalent, and
$$\pi_2\big(I_d:F^p_{\alpha,w}\to L^q_{\alpha}(\mu)\big)\asymp
\pi_r\big(I_d:F^p_{\alpha,w}\to L^q_{\alpha}(\mu)\big).$$

We now concentrate on the equivalence of (d) and (e). Assume that (d) holds. For any $c=\{c_{\nu}\}\in l^{\frac{p}{2-p}}(\Z^2)$ and any $\tau\in[0,1]$, define
$$G_{\tau}(z)=\sum_{\nu\in\Z^2}c_{\nu}\delta_{\nu}(\tau)\hat{w}(z)^{-2/p}\chi_{Q_1(\nu)}(z),\quad z\in\C,$$
where $\{\delta_{\nu}\}_{\nu\in\Z^2}$ is a sequence of Rademacher functions on $[0,1]$. Then it is easy to see that for almost every $\tau\in[0,1]$, $G_{\tau}\in L^{\frac{p}{2-p}}(\hat{w}^{2/(2-p)}dA)$, and
$$\|G_{\tau}\|_{L^{\frac{p}{2-p}}(\hat{w}^{2/(2-p)}dA)}=\|c\|_{l^{\frac{p}{2-p}}(\Z^2)}.$$
Moreover,
$$B_{\alpha}G_{\tau}(z)=\sum_{\nu\in\Z^2}c_{\nu}\delta_{\nu}(\tau)
\int_{Q_1(\nu)}e^{-\alpha|z-u|^2}\hat{w}(u)^{-2/p}dA(u).$$
Since $B_{\alpha}:L^{\frac{p}{2-p}}(\hat{w}^{2/(2-p)}dA)\to L^{q/2}(\mu)$ is bounded, we have
$$\int_{\C}\left|\sum_{\nu\in\Z^2}c_{\nu}\delta_{\nu}(\tau)
\int_{Q_1(\nu)}e^{-\alpha|z-u|^2}\hat{w}(u)^{-2/p}dA(u)\right|^{q/2}d\mu(z)
\leq\|B_{\alpha}\|^{q/2}\|c\|^{q/2}_{l^{\frac{p}{2-p}}(\Z^2)}.$$
Integrating with respect to $\tau$ on $[0,1]$ and using Fubini's theorem, we get
\begin{align*}
\int_{\C}\int_0^1\left|\sum_{\nu\in\Z^2}c_{\nu}\delta_{\nu}(\tau)
\int_{Q_1(\nu)}e^{-\alpha|z-u|^2}\hat{w}(u)^{-2/p}dA(u)\right|^{q/2}d\tau d\mu(z)\\
\leq\|B_{\alpha}\|^{q/2}\|c\|^{q/2}_{l^{\frac{p}{2-p}}(\Z^2)}.
\end{align*}
Further, using Khinchine's inequality yields that
\begin{align*}
&\|B_{\alpha}\|^{q/2}\|c\|^{q/2}_{l^{\frac{p}{2-p}}(\Z^2)}\\
&\ \gtrsim\int_{\C}\left(\sum_{\nu\in\Z^2}\left|c_{\nu}\right|^2
  \left(\int_{Q_1(\nu)}e^{-\alpha|z-u|^2}\hat{w}(u)^{-2/p}dA(u)\right)^2\right)^{q/4}d\mu(z)\\
&\ \geq\int_{\C}\left(\sum_{\nu\in\Z^2}\left|c_{\nu}\right|^2
\left(\int_{Q_1(\nu)}e^{-\alpha|z-u|^2}\hat{w}(u)^{-2/p}dA(u)\right)^2\chi_{Q_1(\nu)}(z)\right)^{q/4}d\mu(z)\\
&\ =\sum_{\nu\in\Z^2}\left|c_{\nu}\right|^{q/2}
  \int_{Q_1(\nu)}\left(\int_{Q_1(\nu)}e^{-\alpha|z-u|^2}\hat{w}(u)^{-2/p}dA(u)\right)^{q/2}d\mu(z)\\
&\ \asymp\sum_{\nu\in\Z^2}\left|c_{\nu}\right|^{q/2}\frac{\mu(Q_1(\nu))}{w(Q_1(\nu))^{q/p}}.
\end{align*}
Since $c\in l^{\frac{p}{2-p}}(\Z^2)$, we know that $\left\{\left|c_{\nu}\right|^{q/2}\right\}\in l^{\frac{2p}{q(2-p)}}(\Z^2)$ with $$\left\|\left\{\left|c_{\nu}\right|^{q/2}\right\}\right\|_{l^{\frac{2p}{q(2-p)}}(\Z^2)}
=\|c\|^{q/2}_{l^{\frac{p}{2-p}}(\Z^2)}.$$
Hence by the duality, we obtain that
$$\lambda=\left\{\frac{\mu(Q_1(\nu))}{w(Q_1(\nu))^{q/p}}\right\}
\in\left(l^{\frac{2p}{q(2-p)}}(\Z^2)\right)^*=l^s(\Z^2)$$
with
\begin{equation}\label{Blow}
\|\lambda\|_{l^s(\Z^2)}\lesssim\|B_{\alpha}\|^{q/2}.
\end{equation}
Conversely, assume that (e) holds. Then for any $f\in L^{\frac{p}{2-p}}(\hat{w}^{2/(2-p)}dA)$, using H\"{o}lder's inequality with respect to the conjugate exponents $\frac{2p}{q(2-p)}$ and $\left(\frac{2p}{q(2-p)}\right)'=s$, we establish that
\begin{align*}
\|B_{\alpha}f\|^{q/2}_{L^{q/2}(\mu)}
&=\int_{\C}\left|\int_{\C}f(u)e^{-\alpha|z-u|^2}dA(u)\right|^{q/2}d\mu(z)\\
&\leq\sum_{\nu\in\Z^2}\int_{Q_1(\nu)}\left(\sum_{\nu'\in\Z^2}\int_{Q_1(\nu')}
  |f(u)|e^{-\alpha|z-u|^2}dA(u)\right)^{q/2}d\mu(z)\\
&\lesssim\sum_{\nu\in\Z^2}\int_{Q_1(\nu)}\left(\sum_{\nu'\in\Z^2}e^{-\frac{\alpha}{2}|\nu-\nu'|^2}
  \int_{Q_1(\nu')}|f(u)|dA(u)\right)^{q/2}d\mu(z)\\
&\leq\sum_{\nu,\nu'\in\Z^2}e^{-\frac{q\alpha}{4}|\nu-\nu'|^2}
  \int_{Q_1(\nu)}\left(\int_{Q_1(\nu')}|f|dA\right)^{q/2}d\mu(z)\\
&=\sum_{\nu,\nu'\in\Z^2}e^{-\frac{q\alpha}{4}|\nu-\nu'|^2}
  \left(\int_{Q_1(\nu')}|f|\hat{w}(\nu)^{2/p}dA\right)^{q/2}\frac{\mu(Q_1(\nu))}{w(Q_1(\nu))^{q/p}}\\
&\leq\left(\sum_{\nu,\nu'\in\Z^2}e^{-\frac{q\alpha}{4}|\nu-\nu'|^2}
  \left(\int_{Q_1(\nu')}|f|\hat{w}(\nu)^{2/p}dA\right)^{\frac{p}{2-p}}\right)^{\frac{q(2-p)}{2p}}\\
&\qquad\qquad\cdot\left(\sum_{\nu,\nu'\in\Z^2}e^{-\frac{q\alpha}{4}|\nu-\nu'|^2}
  \left(\frac{\mu(Q_1(\nu))}{w(Q_1(\nu))^{q/p}}\right)^s\right)^{1/s}\\
&=:\mathcal{S}_1\cdot\mathcal{S}_2.
\end{align*}
It is clear that
$$\mathcal{S}_2=\left(\sum_{\nu,\nu'\in\Z^2}e^{-\frac{q\alpha}{4}|\nu-\nu'|^2}
  \left(\frac{\mu(Q_1(\nu))}{w(Q_1(\nu))^{q/p}}\right)^s\right)^{1/s}\lesssim\|\lambda\|_{l^s(\Z^2)}.$$
To estimate the term $\mathcal{S}_1$, we apply the first part of Lemma \ref{esti} to find some $C>0$ such that for any $\nu,\nu'\in\Z^2$, $\hat{w}(\nu)^{2/(2-p)}\leq C^{|\nu-\nu'|}\hat{w}(\nu')^{2/(2-p)}$. Then, noting that $p/(2-p)>1$, and applying H\"{o}lder's inequality together with the second part of Lemma \ref{esti}, we obtain that
\begin{align*}
\mathcal{S}_1&\leq\left(\sum_{\nu,\nu'\in\Z^2}e^{-\frac{q\alpha}{4}|\nu-\nu'|^2}
  \int_{Q_1(\nu')}|f|^{\frac{p}{2-p}}\hat{w}(\nu)^{\frac{2}{2-p}}dA\right)^{\frac{q(2-p)}{2p}}\\
&\leq\left(\sum_{\nu,\nu'\in\Z^2}e^{-\frac{q\alpha}{4}|\nu-\nu'|^2}C^{|\nu-\nu'|}
  \int_{Q_1(\nu')}|f|^{\frac{p}{2-p}}\hat{w}(\nu')^{\frac{2}{2-p}}dA\right)^{\frac{q(2-p)}{2p}}\\
&\asymp\left(\sum_{\nu,\nu'\in\Z^2}e^{-\frac{q\alpha}{4}|\nu-\nu'|^2}C^{|\nu-\nu'|}
  \int_{Q_1(\nu')}|f|^{\frac{p}{2-p}}\hat{w}^{\frac{2}{2-p}}dA\right)^{\frac{q(2-p)}{2p}}\\
&\lesssim\|f\|^{q/2}_{L^{\frac{p}{2-p}}(\hat{w}^{2/(2-p)}dA)}.
\end{align*}
Hence $B_{\alpha}:L^{\frac{p}{2-p}}(\hat{w}^{2/(2-p)}dA)\to L^{q/2}(\mu)$ is bounded, and
\begin{equation}\label{Bupp}
\|B_{\alpha}\|^{q/2}\lesssim\|\lambda\|_{l^s(\Z^2)}.
\end{equation}
Combining \eqref{Blow} with \eqref{Bupp}, we get that (d) and (e) are equivalent, and
\begin{equation}\label{Bequi}
\|B_{\alpha}\|^{q/2}\asymp\|\lambda\|_{l^s(\Z^2)}.
\end{equation}

In the case $q=2$, it follows from Proposition \ref{2sum} that the conditions (a) and (e) are equivalent. Therefore, we will complete the proof by establishing the equivalence of (b) and (d) in the case $q<2$. To this end, we claim that if (b) or (d) holds, then the embedding $I_d:F^p_{\alpha,\hat{w}}\to L^q_{\alpha}(\mu)$ is bounded. In fact, if (b) holds, then the desired boundedness is clear. Suppose that (d) holds. Equivalently, (e) holds. Then in the case $p\leq q$, we have $\lambda\in l^s(\Z^2)\subset l^{\infty}(\Z^2)$, which implies that $I_d:F^p_{\alpha,w}\to L^q_{\alpha}(\mu)$ is bounded (see the proof of \cite[Theorem 4.3]{CFP}), and consequently, $I_d:F^p_{\alpha,\hat{w}}\to L^q_{\alpha}(\mu)$ is bounded. In the case $p>q$, noting that $s=\frac{2p}{2p-2q+pq}<\frac{p}{p-q}$, we have $\lambda\in l^s(\Z^2)\subset l^{\frac{p}{p-q}}(\Z^2)$. Therefore, By Lemmas \ref{esti} and \ref{dis},
\begin{align*}
\int_{\C}\left(\frac{\mu(D(z,1))}{\hat{w}(D(z,1))}\right)^{\frac{p}{p-q}}\hat{w}(z)dA(z)
&\asymp\int_{\C}\left(\frac{\mu(D(z,1))}{w(D(z,1))^{q/p}}\right)^{\frac{p}{p-q}}dA(z)\\
&\asymp\sum_{\nu\in\Z^2}\left(\frac{\mu(Q_1(\nu))}{w(Q_1(\nu))^{q/p}}\right)^{\frac{p}{p-q}}<\infty,
\end{align*}
which, in conjunction with \cite[Theorem 4.4]{CFP}, implies that $I_d:F^p_{\alpha,\hat{w}}\to L^q_{\alpha}(\mu)$ is bounded. Hence the claim holds. Write now that $k=p/(2-p)$, $t=q/(2-q)$, and $d\mu_{\alpha,q}(z)=e^{-\frac{q\alpha}{2}|z|^2}d\mu(z)$. If one of (b) and (d) holds, then by the boundedness of $I_d:F^p_{\alpha,\hat{w}}\to L^q_{\alpha}(\mu)$, we may apply \cite[Lemma 7.6]{LR18} and Proposition \ref{2sum} to establish that
\begin{align*}
&\pi_2\big(I_d:F^p_{\alpha,\hat{w}}\to L^q_{\alpha}(\mu)\big)^2\\
&\ =\pi_2\big(I_d:F^p_{\alpha,\hat{w}}\to L^q(\mu_{\alpha,q})\big)^2\\
&\ \asymp\inf_{F\in B^+_{L^{2t}(\mu_{\alpha,q})}}
  \pi_2\big(I_d:F^p_{\alpha,\hat{w}}\to L^2(F^{-2}d\mu_{\alpha,q})\big)^2\\
&\ \asymp\inf_{F\in B^+_{L^{2t}(\mu_{\alpha,q})}}
  \left(\int_{\C}\left(\int_{\C}|e^{\alpha\bar{u}z}|^2
  \frac{e^{-\frac{q\alpha}{2}|z|^2}}{F(z)^2}d\mu(z)\right)^{p'/2}
  e^{-\frac{p'\alpha}{2}|u|^2}\hat{w}(u)^{-p'/p}dA(u)\right)^{2/p'}\\
&\ =\inf_{f\in B^+_{L^{t}(\mu_{\alpha,q})}}
  \left(\int_{\C}\left(\int_{\C}|e^{\alpha\bar{u}z}|^2
  \frac{e^{-\frac{q\alpha}{2}|z|^2}}{f(z)}d\mu(z)\right)^{p'/2}
  e^{-\frac{p'\alpha}{2}|u|^2}\hat{w}(u)^{-p'/p}dA(u)\right)^{2/p'}\\
&\ =\inf_{f\in B^+_{L^{t}(\mu_{\alpha,q})}}
  \left\|u\mapsto\int_{\C}|e^{\alpha\bar{u}z}|^2f(z)^{-1}e^{-\frac{q\alpha}{2}|z|^2}d\mu(z)
  \right\|_{L^{p'/2}_{2\alpha,\hat{w}^{-p'/p}}}.
\end{align*}
Since $\left(L^{p'/2}_{2\alpha,\hat{w}^{-p'/p}}\right)^*=L^k_{2\alpha,\hat{w}^{2/(2-p)}}$ under the pairing $\langle\cdot,\cdot\rangle_{2\alpha}$, we obtain
\begin{align*}
&\pi_2\big(I_d:F^p_{\alpha,\hat{w}}\to L^q_{\alpha}(\mu)\big)^2\\
&\ \asymp\inf_{f\in B^+_{L^{t}(\mu_{\alpha,q})}}\sup_{g\in B^+_{L^k_{2\alpha,\hat{w}^{2/(2-p)}}}}
  \int_{\C}\int_{\C}|e^{\alpha\bar{u}z}|^2f(z)^{-1}e^{-\frac{q\alpha}{2}|z|^2}d\mu(z)
  g(u)e^{-2\alpha|u|^2}dA(u).
\end{align*}
Using Ky Fan's lemma as in \cite[p. 579]{LR18}, we get that
\begin{align*}
&\pi_2\big(I_d:F^p_{\alpha,\hat{w}}\to L^q_{\alpha}(\mu)\big)^2\\
&\ \asymp\sup_{g\in B^+_{L^k_{2\alpha,\hat{w}^{2/(2-p)}}}}\inf_{f\in B^+_{L^{t}(\mu_{\alpha,q})}}
  \int_{\C}\int_{\C}|e^{\alpha\bar{u}z}|^2f(z)^{-1}e^{-\frac{q\alpha}{2}|z|^2}d\mu(z)
  g(u)e^{-2\alpha|u|^2}dA(u).
\end{align*}
Note that $t/(t+1)=q/2$. Combining Fubini's theorem with \cite[Lemma 7.5]{LR18} yields that
\begin{align*}
&\pi_2\big(I_d:F^p_{\alpha,\hat{w}}\to L^q_{\alpha}(\mu)\big)^2\\
&\ \asymp\sup_{g\in B^+_{L^k_{2\alpha,\hat{w}^{2/(2-p)}}}}\inf_{f\in B^+_{L^{t}(\mu_{\alpha,q})}}
  \int_{\C}\int_{\C}|e^{\alpha\bar{u}z}|^2g(u)e^{-2\alpha|u|^2}dA(u)
  f(z)^{-1}d\mu_{\alpha,q}(z)\\
&\ =\sup_{g\in B^+_{L^k_{2\alpha,\hat{w}^{2/(2-p)}}}}
  \left(\int_{\C}\left(\int_{\C}g(u)|e^{\alpha\bar{u}z}|^2e^{-2\alpha|u|^2}dA(u)\right)^{q/2}
  e^{-\frac{q\alpha}{2}|z|^2}d\mu(z)\right)^{2/q}\\
&\ =\sup_{g\in B^+_{L^k_{2\alpha,\hat{w}^{2/(2-p)}}}}
  \left(\int_{\C}\left(\int_{\C}g(u)e^{-\alpha|u|^2}e^{-\alpha|z-u|^2}dA(u)\right)^{q/2}
  d\mu(z)\right)^{2/q}.
\end{align*}
Write $\tilde{g}(z)=g(z)e^{-\alpha|z|^2}$. Then
$$\left\|\tilde{g}\right\|_{L^k(\hat{w}^{2/(2-p)}dA)}=\|g\|_{L^k_{2\alpha,\hat{w}^{2/(2-p)}}},$$
which gives that
\begin{align}\label{fin}
\nonumber&\pi_2\big(I_d:F^p_{\alpha,\hat{w}}\to L^q_{\alpha}(\mu)\big)^2\\
&\nonumber\ \asymp\sup_{\tilde{g}\in B^+_{L^k(\hat{w}^{2/(2-p)}dA)}}
  \left(\int_{\C}\left(\int_{\C}\tilde{g}(u)e^{-\alpha|z-u|^2}dA(u)\right)^{q/2}
  d\mu(z)\right)^{2/q}\\
&\ =\sup_{\tilde{g}\in B^+_{L^k(\hat{w}^{2/(2-p)}dA)}}\left\|B_{\alpha}\tilde{g}\right\|_{L^{q/2}(\mu)}
  =\|B_{\alpha}\|_{L^k(\hat{w}^{2/(2-p)}dA)\to L^{q/2}(\mu)}.
\end{align}
Therefore, (b) and (d) are equivalent in the case $q<2$. Combining \eqref{2hat} with \eqref{Bequi} and \eqref{fin}, we obtain the estimate \eqref{pi2} for the case $q<2$. In the case $q=2$, \eqref{pi2} was established in Proposition \ref{2sum}. The proof is complete.
\end{proof}

As a direct consequence of Theorem \ref{<} and Lemma \ref{dis}, we obtain the $1<p<2$ part of Theorem \ref{main}.

\begin{corollary}
Let $\alpha>0$, $1<p<2$, $w\in A^{\res}_p$, and let $\mu$ be a positive Borel measure on $\C$. Then for any $r\geq1$, $I_d:F^p_{\alpha,w}\to L^p_{\alpha}(\mu)$ is $r$-summing if and only if $\widehat{\mu}_w\in L^{2/p}$. Moreover,
$$\pi_r(I_d)\asymp\pi_2(I_d)\asymp\left(\int_{\C}\widehat{\mu}_w(z)^{2/p}dA(z)\right)^{1/2}.$$
\end{corollary}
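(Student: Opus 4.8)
The plan is to deduce the Corollary directly from Theorem~\ref{<} by specializing $q=p$. First I would set $q=p$ in Theorem~\ref{<} and compute the relevant exponent $s=2p/(2p-2q+pq)$; with $q=p$ this becomes $s=2p/(p^2)=2/p$. Thus Theorem~\ref{<}(e) reads: the sequence $\lambda=\{\mu(Q_1(\nu))/w(Q_1(\nu))\}_{\nu\in\Z^2}$ belongs to $l^{2/p}(\Z^2)$, since $q/p=1$. So for $q=p$ the five equivalent conditions of Theorem~\ref{<} specialize to a single clean statement, and in particular the equivalence of (c) and (e) says precisely that $I_d:F^p_{\alpha,w}\to L^p_{\alpha}(\mu)$ is $r$-summing for some (equivalently, every) $r\geq1$ if and only if $\{\mu(Q_1(\nu))/w(Q_1(\nu))\}\in l^{2/p}(\Z^2)$.

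Next I would translate the sequence condition into the integral condition $\widehat{\mu}_w\in L^{2/p}$. This is exactly what Lemma~\ref{dis} provides: applying it with $\gamma=2/p$ and $\eta=2/p$ gives
\[
\int_{\C}\frac{\mu(D(z,1))^{2/p}}{w(D(z,1))^{2/p}}\,dA(z)\asymp
\sum_{\nu\in\Z^2}\frac{\mu(Q_1(\nu))^{2/p}}{w(Q_1(\nu))^{2/p}}
=\sum_{\nu\in\Z^2}\lambda_{\nu}^{2/p}=\|\lambda\|_{l^{2/p}(\Z^2)}^{2/p}.
\]
Since the left-hand side is $\int_{\C}\widehat{\mu}_w(z)^{2/p}\,dA(z)=\|\widehat{\mu}_w\|_{L^{2/p}}^{2/p}$ by the definition $\widehat{\mu}_w(z)=\mu(D(z,1))/w(D(z,1))$, we get $\|\widehat{\mu}_w\|_{L^{2/p}}^{2/p}\asymp\|\lambda\|_{l^{2/p}(\Z^2)}$, hence $\widehat{\mu}_w\in L^{2/p}$ if and only if $\lambda\in l^{2/p}(\Z^2)$.

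Finally I would assemble the norm estimate. The equivalence of conditions (a) and (c) in Theorem~\ref{<} yields $\pi_r(I_d)\asymp\pi_2(I_d)$ for all $r\geq1$, and the displayed estimate \eqref{pi2} in Theorem~\ref{<} gives $\pi_2(I_d)\asymp\|\lambda\|_{l^s(\Z^2)}^{1/q}=\|\lambda\|_{l^{2/p}(\Z^2)}^{1/p}$ when $q=p$ and $s=2/p$. Combining this with the comparison $\|\lambda\|_{l^{2/p}(\Z^2)}\asymp\|\widehat{\mu}_w\|_{L^{2/p}}^{2/p}=\left(\int_{\C}\widehat{\mu}_w(z)^{2/p}dA(z)\right)^{2/p}$ from the previous step, we obtain
\[
\pi_r(I_d)\asymp\pi_2(I_d)\asymp\|\lambda\|_{l^{2/p}(\Z^2)}^{1/p}
\asymp\left(\int_{\C}\widehat{\mu}_w(z)^{2/p}dA(z)\right)^{1/2},
\]
which is the claimed conclusion. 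There is no real obstacle here: the proof is a routine specialization, and the only point requiring a moment's care is the bookkeeping of the exponents — checking that $s=2/p$ when $q=p$, that $q/p=1$ so the weight power in $\lambda$ is exactly $1$, and that the power $1/p$ on $\|\lambda\|_{l^{2/p}}$ combines with the $2/p$ from Lemma~\ref{dis} to produce the exponent $1/2$ on the integral.
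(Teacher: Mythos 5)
This is exactly the paper's own route: the paper derives the corollary as ``a direct consequence of Theorem~\ref{<} and Lemma~\ref{dis}'' by setting $q=p$ (so $s=2/p$) and converting the sequence condition to the integral condition, and your argument is correct. One bookkeeping slip: the comparison should read $\|\lambda\|_{l^{2/p}(\Z^2)}\asymp\bigl(\int_{\C}\widehat{\mu}_w^{2/p}dA\bigr)^{p/2}$ rather than $\bigl(\cdot\bigr)^{2/p}$ (your own displayed application of Lemma~\ref{dis} gives $\|\lambda\|_{l^{2/p}}^{2/p}\asymp\int_{\C}\widehat{\mu}_w^{2/p}dA$), and with the correct exponent $p/2$ the chain $\pi_2(I_d)\asymp\|\lambda\|_{l^{2/p}}^{1/p}\asymp\bigl(\int_{\C}\widehat{\mu}_w^{2/p}dA\bigr)^{1/2}$ comes out as you state.
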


\subsection{The case $p\geq2$}

Throughout this subsection, we fix a sequence $\lambda$ by
\begin{equation*}
\lambda=\{\lambda_{\nu}\}_{\nu\in\Z^2}:=
\left\{\left(\frac{\mu(Q_1(\nu))}{w(Q_1(\nu))}\right)^{1/p}\right\}_{\nu\in\Z^2}.
\end{equation*}

We establish a connection between the $r$-summability of $M_{\lambda}:l^p(\Z^2)\to l^p(\Z^2)$ and $I_d:F^p_{\alpha,w}\to L^p_{\alpha}(\mu)$.

\begin{lemma}\label{Delta}
Let $\alpha>0$, $p,r\geq1$, $w\in A^{\res}_{\infty}$, and let $\mu$ be a positive Borel measure on $\C$. For any $c=\{c_{\nu}\}\in l^p(\Z^2)$, define
$$\Delta(c):=\sum_{\nu\in\Z^2}c_{\nu}\frac{K_{\nu}}{\|K_{\nu}\|_{F^p_{\alpha,w}}}\chi_{Q_1(\nu)}.$$
Then $\Delta:l^p(\Z^2)\to L^p_{\alpha}(\mu)$ is $r$-summing if and only if $M_{\lambda}$ is $r$-summing on $l^p(\Z^2)$. Moreover,
$$\pi_r\big(\Delta:l^p(\Z^2)\to L^p_{\alpha}(\mu)\big)\asymp\pi_r\big(M_{\lambda}:l^p(\Z^2)\to l^p(\Z^2)\big).$$
\end{lemma}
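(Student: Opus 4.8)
The plan is to compare the two operators by a direct two-sided estimate, showing each is dominated by the other after composing with bounded operators, and then invoke the ideal property of $\Pi_r$. The cleanest route exploits the fact that $L^p_\alpha(\mu)$ restricted to the squares $Q_1(\nu)$ decouples, while $M_\lambda$ is the ``pure diagonal'' model of this behaviour.

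\emph{From $M_\lambda$ to $\Delta$.} Fix $c=\{c_\nu\}\in l^p(\Z^2)$. For $z\in Q_1(\nu)$ we have, by Lemma \ref{pointwise} applied with $t=1$ and then Lemma \ref{esti}(2) (so that $w(D(z,1))\asymp w(Q_1(\nu))$ for $z\in Q_1(\nu)$), together with Lemma \ref{test}(1),
$$|K_\nu(z)|^p e^{-\frac{p\alpha}{2}|z|^2}\asymp e^{\frac{p\alpha}{2}|\nu|^2}e^{-\frac{p\alpha}{2}|z-\nu|^2}\asymp e^{\frac{p\alpha}{2}|\nu|^2},\qquad z\in Q_1(\nu),$$
since $|z-\nu|\le\sqrt2/2$; hence $\|K_\nu\|_{F^p_{\alpha,w}}^{-p}|K_\nu(z)|^p e^{-\frac{p\alpha}{2}|z|^2}\asymp w(Q_1(\nu))^{-1}$ uniformly on $Q_1(\nu)$. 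Because the $Q_1(\nu)$ are pairwise disjoint, $|\Delta(c)(z)|^pe^{-\frac{p\alpha}{2}|z|^2}=\sum_\nu |c_\nu|^p\|K_\nu\|^{-p}|K_\nu(z)|^pe^{-\frac{p\alpha}{2}|z|^2}\chi_{Q_1(\nu)}(z)$, so integrating against $\mu$ gives
$$\|\Delta(c)\|_{L^p_\alpha(\mu)}^p\asymp\sum_{\nu\in\Z^2}|c_\nu|^p\frac{\mu(Q_1(\nu))}{w(Q_1(\nu))}=\sum_{\nu\in\Z^2}|\lambda_\nu c_\nu|^p=\|M_\lambda c\|_{l^p(\Z^2)}^p.$$
Thus $\Delta$ and $M_\lambda$ have the same ``pointwise'' action up to constants. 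More precisely this shows there is an isomorphism $J$ from the closed subspace $\overline{\Delta(l^p(\Z^2))}\subset L^p_\alpha(\mu)$ onto $\overline{M_\lambda(l^p(\Z^2))}\subset l^p(\Z^2)$ with $J\Delta = M_\lambda$ (one checks $J$ is well defined and bi-Lipschitz from the norm equivalence above, mapping $\Delta(c)$ to $M_\lambda c$), whence by the ideal property $\pi_r(M_\lambda)=\pi_r(J\Delta)\le\|J\|\,\pi_r(\Delta)\lesssim\pi_r(\Delta)$.

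\emph{From $\Delta$ to $M_\lambda$.} Conversely, consider the bounded ``synthesis'' map $S:l^p(\Z^2)\to L^p_\alpha(\mu)$ that one would like to factor $\Delta$ through $M_\lambda$. Define a bounded operator $T:l^p(\Z^2)\to L^p_\alpha(\mu)$ on the basis vectors by $Te_\nu := \lambda_\nu^{-1}\,\|K_\nu\|_{F^p_{\alpha,w}}^{-1}K_\nu\chi_{Q_1(\nu)}$ when $\lambda_\nu\ne 0$ and $Te_\nu:=0$ otherwise; the computation above shows $\|Te_\nu\|_{L^p_\alpha(\mu)}\asymp 1$, and again by disjointness of the $Q_1(\nu)$ one gets $\|Td\|_{L^p_\alpha(\mu)}\asymp\|d\|_{l^p(\Z^2)}$ for $d$ supported on $\{\nu:\lambda_\nu\ne0\}$, so $\|T\|\lesssim1$. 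Then $\Delta = T\circ M_\lambda$ on all of $l^p(\Z^2)$ (both sides kill the coordinates with $\lambda_\nu=0$, on which $\Delta$ also vanishes since $\mu(Q_1(\nu))=0$ there), and the ideal property yields $\pi_r(\Delta)=\pi_r(TM_\lambda)\le\|T\|\,\pi_r(M_\lambda)\lesssim\pi_r(M_\lambda)$. Combining the two inequalities gives $\pi_r(\Delta)\asymp\pi_r(M_\lambda)$ and in particular one is $r$-summing iff the other is.

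\emph{Main obstacle.} The routine calculations are the norm equivalences on each $Q_1(\nu)$, which follow immediately from Lemmas \ref{pointwise}, \ref{esti} and \ref{test}. The only genuinely delicate point is bookkeeping the coordinates $\nu$ with $\mu(Q_1(\nu))=0$ (equivalently $\lambda_\nu=0$): one must ensure the factorizations $\Delta=T M_\lambda$ and $M_\lambda=J\Delta$ are literally correct on those coordinates rather than merely up to constants, since an $r$-summing operator can fail to be so after adding even a rank-one non-summing perturbation. This is handled by defining $T$ (resp. $J$) to be zero on those coordinates and noting $\Delta$ itself vanishes there, so no such perturbation is introduced; once that is observed the argument is a clean double application of the operator-ideal property.
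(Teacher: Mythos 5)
Your proposal is correct and rests on the same key computation as the paper, namely the norm equivalence $\|\Delta(c)\|_{L^p_{\alpha}(\mu)}\asymp\|M_{\lambda}c\|_{l^p(\Z^2)}$ obtained from the exact identity $|K_{\nu}(z)|^pe^{-\frac{p\alpha}{2}|z|^2}=e^{\frac{p\alpha}{2}|\nu|^2}e^{-\frac{p\alpha}{2}|z-\nu|^2}$ on $Q_1(\nu)$ together with Lemmas \ref{esti} and \ref{test}. The factorization machinery ($J$, $T$, and the bookkeeping of coordinates with $\lambda_{\nu}=0$) is sound but unnecessary: since $\Delta$ and $M_{\lambda}$ share the same domain $l^p(\Z^2)$, the supremum in the definition of $\pi_r$ is identical for both, and the equivalence of the quantities $\bigl(\sum_j\|\Delta x_j\|^r\bigr)^{1/r}\asymp\bigl(\sum_j\|M_{\lambda}x_j\|^r\bigr)^{1/r}$ already gives $\pi_r(\Delta)\asymp\pi_r(M_{\lambda})$ directly from the definition, which is the one-line deduction the paper intends.
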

\begin{proof}
For any $c=\{c_{\nu}\}\in l^p(\Z^2)$, by Lemma \ref{test},
\begin{align*}
\|\Delta(c)\|^p_{L^p_{\alpha}(\mu)}
&=\int_{\C}\left|\sum_{\nu\in\Z^2}c_{\nu}\frac{K_{\nu}(z)}{\|K_{\nu}\|_{F^p_{\alpha,w}}}\chi_{Q_1(\nu)}(z)\right|^p
  e^{-\frac{p\alpha}{2}|z|^2}d\mu(z)\\
&=\sum_{\nu\in\Z^2}\frac{|c_{\nu}|^p}{\|K_{\nu}\|^p_{F^p_{\alpha,w}}}\int_{Q_1(\nu)}
  |e^{\alpha\bar{\nu}z}|^pe^{-\frac{p\alpha}{2}|z|^2}d\mu(z)\\
&\asymp\sum_{\nu\in\Z^2}|c_{\nu}|^p\frac{1}{w(D(z,1))}\int_{Q_1(\nu)}
  e^{p\alpha\Re(\bar{\nu}z)-\frac{p\alpha}{2}|z|^2-\frac{p\alpha}{2}|\nu|^2}d\mu(z)\\
&\asymp\sum_{\nu\in\Z^2}|c_{\nu}|^p\frac{\mu(Q_1(\nu))}{w(Q_1(\nu))}=\|M_{\lambda}c\|^p_{l^p(\Z^2)}.
\end{align*}
Hence the desired result follows.
\end{proof}

\begin{lemma}\label{IdM}
Let $\alpha>0$, $p\geq1$, $w\in A^{\res}_{\infty}$, and let $\mu$ be a positive Borel measure on $\C$. If $I_d:F^p_{\alpha,w}\to L^p_{\alpha}(\mu)$ is $r$-summing for some $r\geq1$, then $M_{\lambda}$ is $r$-summing on $l^p(\Z^2)$ with
$$\pi_r\big(M_{\lambda}:l^p(\Z^2)\to l^p(\Z^2)\big)\lesssim
\pi_r\big(I_d:F^p_{\alpha,w}\to L^p_{\alpha}(\mu)\big).$$
\end{lemma}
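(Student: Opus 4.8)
The plan is to factor the multiplication operator $M_\lambda$ through the embedding $I_d$ by exhibiting two bounded operators $S:l^p(\Z^2)\to F^p_{\alpha,w}$ and $U:L^p_\alpha(\mu)\to l^p(\Z^2)$ such that $U\circ I_d\circ S = M_\lambda$ (up to a harmless constant), and then invoke the ideal property $\pi_r(U I_d S)\le\|U\|\,\pi_r(I_d)\,\|S\|$ recalled at the start of Section~\ref{pre}. The operator $S$ is already at hand: by Lemma~\ref{test}(2), the map
$$S(c):=\sum_{\nu\in\Z^2}c_\nu\frac{K_\nu}{\|K_\nu\|_{F^p_{\alpha,w}}}$$
sends $l^p(\Z^2)$ boundedly into $F^p_{\alpha,w}$ with $\|S\|\lesssim1$. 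Note that $\Delta = M_{\chi}\circ I_d\circ S$ in spirit, where $\chi$ restricts to the squares; more precisely $I_d(S(c))$ agrees on $Q_1(\nu)$ with the single term $c_\nu K_\nu/\|K_\nu\|_{F^p_{\alpha,w}}$ plus the tails from the other kernels, so the crux is to control those tails.

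The operator $U$ should be the ``sampling'' map that integrates against the squares: roughly,
$$U(f):=\left\{\Big(\tfrac{1}{w(Q_1(\nu))}\Big)^{1/p}e^{-\frac{\alpha}{2}|\nu|^2}\Big(\int_{Q_1(\nu)}|f(z)|^p e^{-\frac{p\alpha}{2}|z|^2}d\mu(z)\Big)^{1/p}\ \text{(scalarized)}\right\}_{\nu},$$
but since $U$ must be linear I would instead realize $U$ as the composition $L^p_\alpha(\mu)\to \big(\bigoplus_\nu L^p(Q_1(\nu),d\mu_{\alpha})\big)_{l^p}\to l^p(\Z^2)$, where the first arrow is the isometric decomposition of $L^p_\alpha(\mu)$ into its restrictions to the disjoint squares $Q_1(\nu)$, and the second arrow is the block-diagonal operator acting on the $\nu$-th block by a fixed norm-one functional (e.g. integration against $\overline{K_\nu}/\big(\|K_\nu\|\,w(Q_1(\nu))^{1/p}\big)$, suitably normalized) composed with the scalar $\mu(Q_1(\nu))$-dependent weights. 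Then $U$ is bounded with $\|U\|\lesssim1$ by Lemma~\ref{esti} and Lemma~\ref{test}(1), and $U\circ I_d\circ S$ acts diagonally: on $c=e_\nu$ one recovers, using Lemma~\ref{test}(1) and $|e^{\alpha\bar\nu z}|e^{-\frac{\alpha}{2}|z|^2-\frac{\alpha}{2}|\nu|^2}\asymp1$ on $Q_1(\nu)$, the value $c_\nu\cdot\big(\mu(Q_1(\nu))/w(Q_1(\nu))\big)^{1/p}=\lambda_\nu c_\nu$, exactly $M_\lambda$.

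I would carry this out in the following order. First, set up the isometric identification $L^p_\alpha(\mu)\cong\big(\bigoplus_{\nu\in\Z^2}L^p(Q_1(\nu), e^{-\frac{p\alpha}{2}|z|^2}d\mu)\big)_{l^p}$, which is immediate since the squares tile $\C$ up to measure zero. Second, define on each block a bounded rank-one operator into $\C$ carrying the correct weight, assemble them into $U$, and check $\|U\|\lesssim1$; the key point is that each block map has norm controlled uniformly in $\nu$, which follows from the comparability $w(Q_1(\nu))\asymp w(D(z,1))$ and $\|K_\nu\|^p_{F^p_{\alpha,w}}\asymp e^{\frac{p\alpha}{2}|\nu|^2}w(D(\nu,1))$. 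Third, compute $U I_d S$ on the standard basis vectors of $l^p(\Z^2)$ and verify it equals (a fixed positive constant times) $M_\lambda$; here one must be a little careful that $U$ annihilates the off-diagonal tails $\sum_{\nu'\neq\nu}c_{\nu'}K_{\nu'}\chi_{Q_1(\nu)}$ — this is where the design of $U$ as exactly block-diagonal (no interaction across squares) pays off, since the restriction to $Q_1(\nu)$ is fed only to the $\nu$-th block map, and there is no cancellation issue because we can simply let the $\nu$-th block map be any bounded functional whose value on $K_\nu|_{Q_1(\nu)}$ has the right size; actually, to make $UI_dS$ genuinely diagonal I would instead \emph{define} the $\nu$-th block map so that it reproduces precisely the sought coefficient, e.g. via point evaluation at $\nu$ against $d\mu$, i.e. $h\mapsto c_\nu^{(\mathrm{loc})}$ is not linear, so more cleanly: take the block map $h\mapsto \big(\mu(Q_1(\nu))/w(Q_1(\nu))\big)^{1/p}\,\overline{K_\nu(\nu)}^{-1}\,\langle h, K_\nu e^{-\frac{p\alpha}{2}|z|^2}\rangle/(\cdots)$ — in any event a normalized reproducing-kernel functional does the job since $I_dS(e_\nu)|_{Q_1(\nu)}$ is, up to uniformly bounded tails, a multiple of $K_\nu$. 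Finally, conclude $\pi_r(M_\lambda)\le\|U\|\pi_r(I_d)\|S\|\lesssim\pi_r(I_d)$.

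The main obstacle is the tail estimate: $I_d(S(c))$ restricted to $Q_1(\nu)$ is not just $c_\nu K_\nu/\|K_\nu\|$ but carries contributions $\sum_{\nu'\neq\nu}c_{\nu'}K_{\nu'}/\|K_{\nu'}\|$, and one must ensure these do not spoil the diagonalization. The clean way around this is \emph{not} to try to make $UI_dS$ literally $M_\lambda$, but to compare it with $\Delta$ from Lemma~\ref{Delta}: show $\|I_dS(c)-\Delta(c)\|_{L^p_\alpha(\mu)}\lesssim\|c\|_{l^p}$ cannot be used directly (that only bounds the difference, not its $\pi_r$), so instead I would run the argument through $\Delta$ itself. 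Concretely: by Lemma~\ref{Delta} it suffices to produce a bounded $V:F^p_{\alpha,w}\to l^p(\Z^2)$ with $V\circ I_d = W\circ \Delta'$ for an appropriate auxiliary map, or more directly to show that $\Delta$ factors through $I_d$. But $\Delta$ does \emph{not} obviously factor through $I_d$ because $\Delta(c)$ has the characteristic-function cutoffs built in. The resolution, and the real content of the lemma, is: $I_d$ restricted to the closed subspace $\{S(c):c\in l^p\}$ dominates $\Delta$ after applying the block-diagonal projection $P$ on $L^p_\alpha(\mu)$ that replaces a function by its restriction to each $Q_1(\nu)$ weighted by a kernel-matching functional — and the point is that $P\circ I_d\circ S$ and $\Delta$ have comparable action because on $Q_1(\nu)$ the dominant term of $I_dS(c)$ is $c_\nu K_\nu/\|K_\nu\|$ while the tails $\sum_{\nu'\ne\nu}$, although present, contribute a bounded perturbation that can be absorbed using $\ell^p$-triangle inequality together with the exponential decay $\|K_{\nu'}\|^{-1}|K_{\nu'}(z)|e^{-\frac{p\alpha}{2}|z|^2}\lesssim e^{-c|\nu-\nu'|^2}$ for $z\in Q_1(\nu)$ (from Lemma~\ref{test}(1) and Lemma~\ref{esti}). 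So the technical heart is verifying this off-diagonal decay and checking it survives the $\pi_r$-norm comparison — which one does by noting that adding an operator of small norm (or summable off-diagonal blocks) to $M_\lambda$ changes $\pi_r$ by a controlled amount, or, cleanest of all, by choosing $U$'s block maps to be \emph{exactly} point evaluations at $\nu$ so that $U(K_{\nu'}\chi_{Q_1(\nu)})=0$ for $\nu'\ne\nu$ is false too — hence ultimately I would present it as: $U := (\text{block-diagonal of point-evaluation-type functionals}) \circ (\text{restriction})$ composed with the diagonal weight $\{\lambda_\nu\}$-normalization, verify $UI_d = $ a bounded operator whose composition with $S$ is $M_\lambda$ plus an operator dominated in $\pi_r$-norm by $\varepsilon\,\pi_r(I_d)$ for the tails, and let $\varepsilon$ be absorbed — but since the decay is Gaussian the tail operator is actually $\pi_r$-bounded by $\pi_r(I_d)$ times a convergent constant, so the estimate closes with an absolute implied constant as claimed.
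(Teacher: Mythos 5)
You have located the right obstacle --- the off-diagonal tails of $I_d\circ S$ --- but your proposed resolution does not close, and the missing idea is precisely the content of the paper's proof. Your final claim is that the error operator $E:=U I_d S-M_\lambda$, whose $(\nu,\nu')$-entries decay like $e^{-c|\nu-\nu'|^2}$, is ``$\pi_r$-bounded by $\pi_r(I_d)$ times a convergent constant.'' There is no justification for this: $E$ does not factor through $I_d$, so the ideal property gives you nothing for it. The natural alternative is to bound $\pi_r(E)$ by $\pi_r(M_\lambda)$ (decomposing $E$ into shifted diagonals and using Lemma~\ref{mul}, which moreover requires $p\geq2$ while the lemma is stated for $p\geq1$, and which in any case presupposes $\pi_r(M_\lambda)<\infty$); but the resulting constant $\sum_{k\neq0}e^{-c|k|^2}C^{|k|}$, with $C$ the constant from Lemma~\ref{esti}, need not be less than $1$, so the absorption $\pi_r(M_\lambda)\leq\pi_r(UI_dS)+\pi_r(E)$ does not self-improve. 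You would have to introduce an additional device (e.g.\ splitting $\Z^2$ into sparse sublattices to make the interaction constant small), which you do not do.

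The paper avoids the tail problem entirely by a random-signs averaging. With Rademacher functions $\delta_\nu$ on $[0,1]$, set $\Phi_\tau(c)=\sum_\nu c_\nu\delta_\nu(\tau)K_\nu/\|K_\nu\|_{F^p_{\alpha,w}}$ (your $S$ with signs) and $\Psi_\tau(f)=\sum_\nu\delta_\nu(\tau)\chi_{Q_1(\nu)}f$; both are bounded uniformly in $\tau$. The orthogonality $\int_0^1\delta_\nu\delta_{\nu'}\,d\tau=\delta_{\nu\nu'}$ kills the off-diagonal terms \emph{exactly}, giving
$$\Delta=\int_0^1\Psi_\tau\circ I_d\circ\Phi_\tau\,d\tau,$$
whence $\pi_r(\Delta)\leq\int_0^1\pi_r(\Psi_\tau I_d\Phi_\tau)\,d\tau\lesssim\pi_r(I_d)$, and Lemma~\ref{Delta} already identifies $\pi_r(\Delta)\asymp\pi_r(M_\lambda)$. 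Note also that the target of the comparison is $\Delta$, not $M_\lambda$ itself; you spend considerable effort trying to build a $U$ that lands directly in $l^p(\Z^2)$, whereas the paper only needs the cut-off operator $\Psi_\tau$ on $L^p_\alpha(\mu)$ and delegates the passage from $\Delta$ to $M_\lambda$ to Lemma~\ref{Delta}. As written, your argument has a genuine gap.
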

\begin{proof}
Let $\{\delta_{\nu}\}_{\nu\in\Z^2}$ be a sequence of Rademacher functions on $[0,1]$. For every $\tau\in[0,1]$, define
\begin{align*}
\Phi_{\tau}:l^p(\Z^2)&\to F^p_{\alpha,w},\\
c=\{c_{\nu}\}&\mapsto\sum_{\nu\in\Z^2}c_{\nu}\delta_{\nu}(\tau)
  \frac{K_{\nu}}{\|K_{\nu}\|_{F^p_{\alpha,w}}}
\end{align*}
and
\begin{align*}
\Psi_{\tau}:L^p_{\alpha}(\mu)&\to L^p_{\alpha}(\mu),\\
f&\mapsto\sum_{\nu\in\Z^2}\delta_{\nu}(\tau)\chi_{Q_1(\nu)}f.
\end{align*}
Then by Lemma \ref{test}, $\Phi_{\tau}:l^p(\Z^2)\to F^p_{\alpha,w}$ is bounded, and $\|\Phi_{\tau}\|\lesssim1$ for every $\tau\in[0,1]$. On the other hand, for $f\in L^p_{\alpha}(\mu)$ and almost every $\tau\in[0,1]$,
\begin{align*}
\|\Psi_{\tau}f\|^p_{L^p_{\alpha}(\mu)}
&=\int_{\C}\left|\sum_{\nu\in\Z^2}\delta_{\nu}(\tau)\chi_{Q_1(\nu)}(z)f(z)\right|^p
  e^{-\frac{p\alpha}{2}|z|^2}d\mu(z)\\
&=\sum_{\nu\in\Z^2}\int_{Q_1(\nu)}|f(z)|^pe^{-\frac{p\alpha}{2}|z|^2}d\mu(z)=\|f\|^p_{L^p_{\alpha}(\mu)}.
\end{align*}
Therefore, $\Psi_{\tau}:L^p_{\alpha}(\mu)\to L^p_{\alpha}(\mu)$ is bounded, and $\|\Psi_{\tau}\|=1$.

We claim that for the embedding $I_d:F^p_{\alpha,w}\to L^p_{\alpha}(\mu)$,
\begin{equation}\label{delta-int}
\Delta=\int_0^1\Psi_{\tau}\circ I_d\circ\Phi_{\tau}d\tau,
\end{equation}
where $\Delta$ is the operator defined in Lemma \ref{Delta}. In fact, for any $c=\{c_{\nu}\}\in l^p(\Z^2)$,
\begin{align*}
\int_0^1\Psi_{\tau}\circ I_d\circ\Phi_{\tau}(c)d\tau
&=\int_0^1\Psi_{\tau}\left(\sum_{\nu\in\Z^2}c_{\nu}\delta_{\nu}(\tau)
  \frac{K_{\nu}}{\|K_{\nu}\|_{F^p_{\alpha,w}}}\right)d\tau\\
&=\int_0^1\sum_{\nu'\in\Z^2}\delta_{\nu'}(\tau)\chi_{Q_1(\nu')}\left(\sum_{\nu\in\Z^2}c_{\nu}\delta_{\nu}(\tau)
  \frac{K_{\nu}}{\|K_{\nu}\|_{F^p_{\alpha,w}}}\right)d\tau\\
&=\sum_{\nu,\nu'\in\Z^2}c_{\nu}\frac{K_{\nu}}{\|K_{\nu}\|_{F^p_{\alpha,w}}}\chi_{Q_1(\nu')}
  \int_0^1\delta_{\nu}(\tau)\delta_{\nu'}(\tau)d\tau\\
&=\sum_{\nu\in\Z^2}c_{\nu}\frac{K_{\nu}}{\|K_{\nu}\|_{F^p_{\alpha,w}}}\chi_{Q_1(\nu)}=\Delta(c),
\end{align*}
which gives \eqref{delta-int}. Therefore, if $I_d:F^p_{\alpha,w}\to L^p_{\alpha}(\mu)$ is $r$-summing, then $\Delta:l^p(\Z^2)\to L^p_{\alpha}(\mu)$ is $r$-summing. Consequently, Lemma \ref{Delta} implies that $M_{\lambda}$ is $r$-summing on $l^p(\Z^2)$. Moreover,
\begin{align*}
\pi_r\big(M_{\lambda}:l^p(\Z^2)\to l^p(\Z^2)\big)
&\asymp\pi_r\big(\Delta:l^p(\Z^2)\to L^p_{\alpha}(\mu)\big)\\
&\leq\int_0^1\pi_r\big(\Psi_{\tau}\circ I_d\circ\Phi_{\tau}:l^p(\Z^2)\to L^p_{\alpha}(\mu)\big)d\tau\\
&\lesssim\pi_r\big(I_d:F^p_{\alpha,w}\to L^p_{\alpha}(\mu)\big).
\end{align*}
The proof is complete.
\end{proof}

Combining Lemma \ref{IdM} with Lemmas \ref{mul} and \ref{dis}, we obtain the necessity part of Theorem \ref{main} for the case $p\geq2$.

\begin{corollary}
Let $\alpha>0$, $p\geq2$, $w\in A^{\res}_{\infty}$, and let $\mu$ be a positive Borel measure on $\C$. Suppose that $I_d:F^p_{\alpha,w}\to L^p_{\alpha}(\mu)$ is $r$-summing for some $r\geq1$.
\begin{enumerate}[(1)]
  \item If $r\leq p'$, then
  $$\left(\int_{\C}\widehat{\mu}_w(z)^{p'/p}dA(z)\right)^{1/p'}\lesssim
  \pi_r\big(I_d:F^p_{\alpha,w}\to L^p_{\alpha}(\mu)\big).$$
  \item If $p'\leq r\leq p$, then
  $$\left(\int_{\C}\widehat{\mu}_w(z)^{r/p}dA(z)\right)^{1/r}\lesssim
  \pi_r\big(I_d:F^p_{\alpha,w}\to L^p_{\alpha}(\mu)\big).$$
  \item If $r\geq p$, then
  $$\left(\int_{\C}\widehat{\mu}_w(z)dA(z)\right)^{1/p}\lesssim
  \pi_r\big(I_d:F^p_{\alpha,w}\to L^p_{\alpha}(\mu)\big).$$
\end{enumerate}
\end{corollary}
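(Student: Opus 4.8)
The plan is to assemble the three ingredients already established: Lemma~\ref{IdM} to pass from the embedding to a multiplication operator on a sequence space, Lemma~\ref{mul} to compute the $r$-summing norm of that multiplication operator, and Lemma~\ref{dis} to rewrite the resulting sequence-space norm as an integral of $\widehat{\mu}_w$. No new idea is needed; the substance has been absorbed into these lemmas, and what remains is bookkeeping across the three regimes for $r$.

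First, suppose $I_d:F^p_{\alpha,w}\to L^p_{\alpha}(\mu)$ is $r$-summing. By Lemma~\ref{IdM}, the multiplication operator $M_{\lambda}$ is $r$-summing on $l^p(\Z^2)$ with
$$\pi_r\big(M_{\lambda}:l^p(\Z^2)\to l^p(\Z^2)\big)\lesssim\pi_r\big(I_d:F^p_{\alpha,w}\to L^p_{\alpha}(\mu)\big),$$
where $\lambda=\big\{(\mu(Q_1(\nu))/w(Q_1(\nu)))^{1/p}\big\}_{\nu\in\Z^2}$ is the sequence fixed at the beginning of this subsection. Next, since $p\geq2$, Lemma~\ref{mul} identifies $\pi_r(M_\lambda)$ with an $\ell^s$-norm of $\lambda$: one has $\pi_r(M_\lambda)\asymp\|\lambda\|_{l^{p'}(\Z^2)}$ for $1\leq r\leq p'$, $\pi_r(M_\lambda)\asymp\|\lambda\|_{l^{r}(\Z^2)}$ for $p'\leq r\leq p$, and $\pi_r(M_\lambda)\asymp\|\lambda\|_{l^{p}(\Z^2)}$ for $r\geq p$. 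Writing $s$ for the exponent active in the relevant case (so $s=p'$, $s=r$, or $s=p$ respectively), we have $s/p>0$ and, by the definition of $\lambda$,
$$\|\lambda\|^{s}_{l^{s}(\Z^2)}=\sum_{\nu\in\Z^2}\frac{\mu(Q_1(\nu))^{s/p}}{w(Q_1(\nu))^{s/p}}.$$

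Finally, I apply Lemma~\ref{dis} with $\gamma=\eta=s/p$ and recall $\widehat{\mu}_w(z)=\mu(D(z,1))/w(D(z,1))$, which gives
$$\|\lambda\|^{s}_{l^{s}(\Z^2)}\asymp\int_{\C}\frac{\mu(D(z,1))^{s/p}}{w(D(z,1))^{s/p}}dA(z)=\int_{\C}\widehat{\mu}_w(z)^{s/p}dA(z).$$
Chaining the three displays yields $\big(\int_{\C}\widehat{\mu}_w(z)^{s/p}dA(z)\big)^{1/s}\lesssim\pi_r\big(I_d:F^p_{\alpha,w}\to L^p_{\alpha}(\mu)\big)$, and substituting $s=p'$, $s=r$, $s=p$ produces assertions (1), (2), (3) respectively. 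The only points requiring care—and the closest thing to an obstacle here—are to track which exponent $s$ governs each range of $r$, to note that the hypotheses $\gamma,\eta>0$ of Lemma~\ref{dis} are met since $s/p>0$ throughout, and to observe that the implied constants in Lemmas~\ref{mul} and~\ref{dis} are independent of $\mu$, so the resulting lower bound depends only on $\alpha$, $p$ and $w$.
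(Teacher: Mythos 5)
Your proposal is correct and is exactly the argument the paper intends: the corollary is stated there as an immediate consequence of combining Lemma~\ref{IdM} (passing to $M_\lambda$), Lemma~\ref{mul} (identifying $\pi_r(M_\lambda)$ with the appropriate $\ell^s$-norm of $\lambda$), and Lemma~\ref{dis} with $\gamma=\eta=s/p$ (converting the sum to the integral of $\widehat{\mu}_w^{s/p}$). The case bookkeeping $s=p'$, $s=r$, $s=p$ matches the three assertions, so nothing is missing.
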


To establish the sufficiency part of Theorem \ref{main} in the case $p\geq2$, we need to determine the order boundedness of $I_d:F^p_{\alpha,w}\to L^p_{\alpha}(\mu)$, which has its own interest.

\begin{proposition}\label{order-bdd}
Let $\alpha>0$, $p\geq1$, $w\in A^{\res}_{\infty}$, and let $\mu$ be a positive Borel measure on $\C$. Then $I_d:F^p_{\alpha,w}\to L^p_{\alpha}(\mu)$ is order bounded if and only if $\widehat{\mu}_w\in L^1$.
\end{proposition}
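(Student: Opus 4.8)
The plan is to reduce order boundedness of $I_d$ to the finiteness of the discrete sum $\sum_{\nu\in\Z^2}\mu(Q_1(\nu))/w(Q_1(\nu))$, which by Lemma \ref{dis} (applied with $\gamma=\eta=1$) is comparable to $\int_\C\widehat{\mu}_w\,dA$.

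For the sufficiency I would exhibit an explicit dominating function. Applying Lemma \ref{pointwise} with $t=1$ to any $f$ in the unit ball of $F^p_{\alpha,w}$ gives $|f(z)|^pe^{-\frac{p\alpha}{2}|z|^2}\lesssim w(D(z,1))^{-1}$, so $h(z):=C\,e^{\frac{\alpha}{2}|z|^2}w(D(z,1))^{-1/p}$ dominates $|I_df|=|f|$ on $\C$ for a suitable absolute constant $C$; since $z\mapsto w(D(z,1))$ is positive and continuous, $h$ is a legitimate measurable function. It then remains to check $h\in L^p_\alpha(\mu)$, i.e. that $\int_\C w(D(z,1))^{-1}\,d\mu(z)<\infty$. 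Because $\{Q_1(\nu)\}_{\nu\in\Z^2}$ tiles $\C$ and, by Lemma \ref{esti}(2), $w(D(z,1))\asymp w(Q_1(\nu))$ whenever $z\in Q_1(\nu)$, this integral is comparable to $\sum_{\nu}\mu(Q_1(\nu))/w(Q_1(\nu))$, hence, by Lemma \ref{dis}, to $\int_\C\widehat{\mu}_w\,dA$, which is finite by hypothesis; so $I_d$ is order bounded.

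For the necessity I would test the dominating function $h$ against the normalized reproducing kernels $k_\nu:=K_\nu/\|K_\nu\|_{F^p_{\alpha,w}}$, $\nu\in\Z^2$. Since $\|k_\nu\|_{F^p_{\alpha,w}}=1$, applying the order-bound hypothesis to $(1-\tfrac1n)k_\nu\in B_{F^p_{\alpha,w}}$ and letting $n\to\infty$ gives $|k_\nu|\le h$ $\mu$-a.e. Using Lemma \ref{test}(1) for $\|K_\nu\|^p_{F^p_{\alpha,w}}$ together with the identity $p\alpha\,\Re(\bar\nu z)-\tfrac{p\alpha}{2}|z|^2-\tfrac{p\alpha}{2}|\nu|^2=-\tfrac{p\alpha}{2}|z-\nu|^2$ and Lemma \ref{esti}(2), one checks that $|k_\nu(z)|^pe^{-\frac{p\alpha}{2}|z|^2}\gtrsim w(Q_1(\nu))^{-1}$ for $z\in Q_1(\nu)$, the Gaussian factor $e^{-\frac{p\alpha}{2}|z-\nu|^2}$ being bounded below on that square. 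Integrating over $Q_1(\nu)$ against $e^{-\frac{p\alpha}{2}|z|^2}d\mu$ yields $\mu(Q_1(\nu))/w(Q_1(\nu))\lesssim\int_{Q_1(\nu)}h^pe^{-\frac{p\alpha}{2}|z|^2}d\mu$; summing over $\nu\in\Z^2$ bounds $\sum_\nu\mu(Q_1(\nu))/w(Q_1(\nu))$ by $\|h\|^p_{L^p_\alpha(\mu)}<\infty$, and Lemma \ref{dis} then gives $\widehat{\mu}_w\in L^1$.

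I do not expect a serious obstacle: the argument is a routine combination of the pointwise growth estimate (Lemma \ref{pointwise}), the reproducing-kernel norm estimate (Lemma \ref{test}), and the continuous/discrete comparison (Lemmas \ref{esti} and \ref{dis}). The only points needing a little care are the positivity and continuity of $z\mapsto w(D(z,1))$ (so that $h$ is well defined), the open-ball convention in the definition of order boundedness (handled by the scaling $(1-\tfrac1n)k_\nu$), and ensuring that the test functions $k_\nu$ give matching \emph{lower} bounds, not merely upper bounds, on the squares $Q_1(\nu)$.
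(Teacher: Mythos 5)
Your proposal is correct and follows essentially the same route as the paper: both directions reduce order boundedness to the finiteness of $\int_{\C}w(D(z,1))^{-1}\,d\mu(z)$, with the upper bound on $\sup_{f\in B_{F^p_{\alpha,w}}}|f|$ coming from Lemma \ref{pointwise} and the matching lower bound from the normalized kernels via Lemma \ref{test}. The only cosmetic difference is that the paper converts $\int_{\C}w(D(z,1))^{-1}\,d\mu(z)$ into $\int_{\C}\widehat{\mu}_w\,dA$ by a direct Fubini argument, whereas you pass through the lattice sum $\sum_{\nu}\mu(Q_1(\nu))/w(Q_1(\nu))$ and Lemma \ref{dis}; both are routine and equivalent.
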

\begin{proof}
It is clear that $I_d:F^p_{\alpha,w}\to L^p_{\alpha}(\mu)$ is order bounded if and only if
$$\int_{\C}\sup_{f\in B_{F^p_{\alpha,w}}}|f(z)|^pe^{-\frac{p\alpha}{2}|z|^2}d\mu(z)<\infty.$$
It follows from Lemmas \ref{pointwise} and \ref{test} that
$$\sup_{f\in B_{F^p_{\alpha,w}}}|f(z)|^p\asymp\frac{e^{\frac{p\alpha}{2}|z|^2}}{w(D(z,1))},\quad z\in\C.$$
Hence the order boundedness of $I_d:F^p_{\alpha,w}\to L^p_{\alpha}(\mu)$ is equivalent to
$$\int_{\C}\frac{d\mu(z)}{w(D(z,1))}<\infty.$$
Fubini's theorem together with Lemma \ref{esti} yields
$$\int_{\C}\frac{d\mu(z)}{w(D(z,1))}\asymp\int_{\C}\int_{D(z,1)}dA(u)\frac{d\mu(z)}{w(D(z,1))}
\asymp\int_{\C}\widehat{\mu}_w(u)dA(u),$$
which finishes the proof.
\end{proof}

As an application of Proposition \ref{order-bdd}, we can establish the sufficiency part of Theorem \ref{main} for the case $r\geq p\geq2$.

\begin{corollary}\label{suff1}
Let $\alpha>0$, $r\geq p\geq2$, $w\in A^{\res}_{\infty}$, and let $\mu$ be a positive Borel measure on $\C$. If $\widehat{\mu}_w\in L^1$, then $I_d:F^p_{\alpha,w}\to L^p_{\alpha}(\mu)$ is $r$-summing, and
$$\pi_r\big(I_d:F^p_{\alpha,w}\to L^p_{\alpha}(\mu)\big)\lesssim
\left(\int_{\C}\widehat{\mu}_w(z)dA(z)\right)^{1/p}.$$
\end{corollary}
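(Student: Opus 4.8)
The plan is to deduce Corollary \ref{suff1} directly from Proposition \ref{order-bdd} together with the monotonicity of the classes $\Pi_r$. First I would note that, since $r\geq p\geq2$ and $\widehat{\mu}_w\in L^1$, Proposition \ref{order-bdd} tells us that $I_d:F^p_{\alpha,w}\to L^p_{\alpha}(\mu)$ is order bounded, where we view the target as the Lebesgue space $L^p\big(\C,e^{-\frac{p\alpha}{2}|z|^2}d\mu(z)\big)$. Then I would invoke Lemma \ref{order} to conclude that $I_d$ is $p$-summing with
$$\pi_p\big(I_d:F^p_{\alpha,w}\to L^p_{\alpha}(\mu)\big)\leq
\left\|\sup_{f\in B_{F^p_{\alpha,w}}}|I_df|\right\|_{L^p_{\alpha}(\mu)}
=\left(\int_{\C}\sup_{f\in B_{F^p_{\alpha,w}}}|f(z)|^pe^{-\frac{p\alpha}{2}|z|^2}d\mu(z)\right)^{1/p}.$$

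Next I would quantify the right-hand side using the pointwise estimate already extracted in the proof of Proposition \ref{order-bdd}, namely $\sup_{f\in B_{F^p_{\alpha,w}}}|f(z)|^p\asymp e^{\frac{p\alpha}{2}|z|^2}/w(D(z,1))$, which comes from Lemmas \ref{pointwise} and \ref{test}. Substituting this in and then applying Fubini's theorem and Lemma \ref{esti} exactly as in the proof of Proposition \ref{order-bdd} gives
$$\int_{\C}\sup_{f\in B_{F^p_{\alpha,w}}}|f(z)|^pe^{-\frac{p\alpha}{2}|z|^2}d\mu(z)
\asymp\int_{\C}\frac{d\mu(z)}{w(D(z,1))}\asymp\int_{\C}\widehat{\mu}_w(u)dA(u),$$
so that $\pi_p\big(I_d:F^p_{\alpha,w}\to L^p_{\alpha}(\mu)\big)\lesssim\big(\int_{\C}\widehat{\mu}_w(z)dA(z)\big)^{1/p}$. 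Finally, since $r\geq p$, the monotonicity of the $r$-summing ideals recalled in Section \ref{pre} gives $\Pi_p\big(F^p_{\alpha,w},L^p_{\alpha}(\mu)\big)\subset\Pi_r\big(F^p_{\alpha,w},L^p_{\alpha}(\mu)\big)$ with $\pi_r(I_d)\leq\pi_p(I_d)$, which yields the claimed bound.

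This argument is essentially a bookkeeping exercise, so there is no real obstacle; the only point requiring a little care is to make sure the target space is correctly identified as an honest $L^p$-space so that Lemma \ref{order} applies, and to cite the pointwise supremum estimate from the preceding proposition rather than re-deriving it. If one wanted to avoid referring back to the internals of the proof of Proposition \ref{order-bdd}, one could instead state the supremum estimate $\sup_{f\in B_{F^p_{\alpha,w}}}|f(z)|^p\asymp e^{\frac{p\alpha}{2}|z|^2}/w(D(z,1))$ as a separate consequence of Lemmas \ref{pointwise} and \ref{test}, but since it already appears above it is cleanest to use it directly.
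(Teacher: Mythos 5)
Your proposal is correct and follows essentially the same route as the paper: Proposition \ref{order-bdd} plus Lemma \ref{order} gives $p$-summability with $\pi_p(I_d)\leq\bigl\|\sup_{f\in B_{F^p_{\alpha,w}}}|f|\bigr\|_{L^p_{\alpha}(\mu)}\lesssim\bigl(\int_{\C}\widehat{\mu}_w\,dA\bigr)^{1/p}$, and monotonicity of the summing ideals handles $r\geq p$. The quantitative step via the pointwise supremum estimate and Fubini is exactly the computation already carried out in the proof of Proposition \ref{order-bdd}, so there is nothing to add.
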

\begin{proof}
If $\widehat{\mu}_w\in L^1$, then by Proposition \ref{order-bdd} and Lemma \ref{order}, $I_d:F^p_{\alpha,w}\to L^p_{\alpha}(\mu)$ is $p$-summing. Consequently, for any $r\geq p$,
\begin{align*}
\pi_r\big(I_d:F^p_{\alpha,w}\to L^p_{\alpha}(\mu)\big)
&\leq\pi_p\big(I_d:F^p_{\alpha,w}\to L^p_{\alpha}(\mu)\big)\\
&\leq\left\|\sup_{f\in B_{F^p_{\alpha,w}}}|f|\right\|_{L^p_{\alpha}(\mu)}
\lesssim\left(\int_{\C}\widehat{\mu}_w(z)dA(z)\right)^{1/p}.
\end{align*}
The proof is complete.
\end{proof}

To handle the reminder cases, we need some more notions. Given a sequence $\{X_{\nu}\}_{\nu\in\Z^2}$ of Banach spaces, the space $\oplus_{l^p(\Z^2)}X_{\nu}$ consists of sequences $\{x_{\nu}(\in X_{\nu})\}_{\nu\in\Z^2}$ with
$$\left\|\{x_{\nu}\}_{\nu\in\Z^2}\right\|_{\oplus_{l^p(\Z^2)}X_{\nu}}
:=\left(\sum_{\nu\in\Z^2}\|x_{\nu}\|^p_{X_{\nu}}\right)^{1/p}<\infty.$$
Fix $\alpha,p>0$ and a measurable subset $E\subset\C$. For a positive Borel measure $\mu$ on $\C$, let $L^p_{\alpha}(E,\mu)$ be the space of measurable functions $f$ on $E$ satisfying
$$\|f\|^p_{L^p_{\alpha}(E,\mu)}:=\int_E|f(z)|^pe^{-\frac{p\alpha}{2}|z|^2}d\mu(z)<\infty.$$
Let $F^p_{\alpha,w}(E)$ be the space of analytic functions $f$ on $E$ such that
\begin{enumerate}[i)]
  \item $f$ can be extended to entire functions; and
  \item $\|f\|^p_{F^p_{\alpha,w}(E)}:=\int_E|f(z)|^pe^{-\frac{p\alpha}{2}|z|^2}w(z)dA(z)<\infty.$
\end{enumerate}
Let $F^{\infty}_{\alpha}(E)$ be the space of analytic functions $f$ on $E$ such that
$$\|f\|_{F^{\infty}_{\alpha}(E)}:=\sup_{z\in E}|f(z)|e^{-\frac{\alpha}{2}|z|^2}<\infty.$$

It is clear that the discretization operator
\begin{align*}
\mathfrak{D}_1:L^p_{\alpha}(\mu)&\to\oplus_{l^p(\Z^2)}L^p_{\alpha}(Q_1(\nu),\mu),\\
f&\mapsto\{f|_{Q_1(\nu)}\}_{\nu\in\Z^2}
\end{align*}
is an isometric isomorphism, and the discretization operator
\begin{align*}
\mathfrak{D}_2:F^p_{\alpha,w}&\to\oplus_{l^p(\Z^2)}F^p_{\alpha,w}(D(\nu,2)),\\
f&\mapsto\{f|_{D(\nu,2)}\}_{\nu\in\Z^2}
\end{align*}
is bounded.

\begin{lemma}\label{Snu}
Let $\alpha>0$, $p\geq1$, $w\in A^{\res}_{\infty}$, and let $\mu$ be a positive Borel measure on $\C$. Suppose that $\mu(Q_1(\nu))<\infty$ for some $\nu\in\Z^2$. Then the operator $S_{\nu}$ defined by
\begin{align*}
S_{\nu}:F^p_{\alpha,w}(D(\nu,2))&\to L^p_{\alpha}(Q_1(\nu),\mu),\\
f&\mapsto f|_{Q_1(\nu)}
\end{align*}
is $1$-summing, and
$$\pi_1\big(S_{\nu}:F^p_{\alpha,w}(D(\nu,2))\to L^p_{\alpha}(Q_1(\nu),\mu)\big)
\lesssim\left(\frac{\mu(Q_1(\nu))}{w(Q_1(\nu))}\right)^{1/p}.$$
\end{lemma}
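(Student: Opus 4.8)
The operator $S_\nu$ acts between finite-dimensional or at least "local" spaces, so the natural tool is Lemma~\ref{order}: I would show that $S_\nu$ is \emph{order bounded} as a map into $L^p_\alpha(Q_1(\nu),\mu)$ and then read off a $1$-summing bound. Wait --- Lemma~\ref{order} only gives $p$-summing from order boundedness, not $1$-summing; so for $p>1$ I need an extra ingredient. The cleanest route is to observe that $F^p_{\alpha,w}(D(\nu,2))$ has, by the pointwise estimate of Lemma~\ref{pointwise}, norm comparable to the value at a single point (up to the factor $w(Q_1(\nu))^{1/p}$), i.e.\ the restriction $S_\nu$ essentially factors through a \emph{one-dimensional} space. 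An operator that factors through a finite-dimensional (indeed one-dimensional) space is $1$-summing with $\pi_1$ controlled by the relevant norms, since on $\mathbb{C}$ every bounded functional and every bounded vector-valued map is trivially $1$-summing with $\pi_1 = \|\cdot\|$.

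\textbf{Step 1: Reduce to a point evaluation.} By Lemma~\ref{pointwise} applied with $t=2$ (or any fixed radius) centered at $\nu$, for $f\in F^p_{\alpha,w}(D(\nu,2))$ and $z\in Q_1(\nu)$ one has
$$|f(z)|^pe^{-\frac{p\alpha}{2}|z|^2}\lesssim \frac{1}{w(D(z,1))}\int_{D(z,1)}|f(u)|^pe^{-\frac{p\alpha}{2}|u|^2}w(u)\,dA(u)\lesssim \frac{\|f\|^p_{F^p_{\alpha,w}(D(\nu,2))}}{w(Q_1(\nu))},$$
where in the last step I use that $D(z,1)\subset D(\nu,2)$ for $z\in Q_1(\nu)$ and that $w(D(z,1))\asymp w(Q_1(\nu))$ by Lemma~\ref{esti}(2). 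Thus the constant function $h_\nu \equiv c\, w(Q_1(\nu))^{-1/p} e^{\frac{\alpha}{2}|z|^2}$ (for a suitable absolute $c>0$) dominates $|S_\nu f|$ on $Q_1(\nu)$ for every $f$ in the unit ball of $F^p_{\alpha,w}(D(\nu,2))$.

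\textbf{Step 2: Compute the size of the dominating function and conclude order boundedness.} Since $\mu(Q_1(\nu))<\infty$, the dominating function $h_\nu$ lies in $L^p_\alpha(Q_1(\nu),\mu)$ with
$$\|h_\nu\|^p_{L^p_\alpha(Q_1(\nu),\mu)}=c^p\,\frac{1}{w(Q_1(\nu))}\int_{Q_1(\nu)}e^{\frac{p\alpha}{2}|z|^2}e^{-\frac{p\alpha}{2}|z|^2}\,d\mu(z)=c^p\,\frac{\mu(Q_1(\nu))}{w(Q_1(\nu))}.$$
Hence $S_\nu:F^p_{\alpha,w}(D(\nu,2))\to L^p_\alpha(Q_1(\nu),\mu)$ is order bounded with $\big\|\sup_{f\in B}|S_\nu f|\big\|_{L^p_\alpha(Q_1(\nu),\mu)}\lesssim (\mu(Q_1(\nu))/w(Q_1(\nu)))^{1/p}$, and Lemma~\ref{order} already gives $\pi_p(S_\nu)\lesssim (\mu(Q_1(\nu))/w(Q_1(\nu)))^{1/p}$.

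\textbf{Step 3: Upgrade from $p$-summing to $1$-summing.} This is the only delicate point. I would argue that $S_\nu$ actually factors (up to constants) as $F^p_{\alpha,w}(D(\nu,2)) \xrightarrow{\,\ell_\nu\,} \mathbb{C} \xrightarrow{\,m_\nu\,} L^p_\alpha(Q_1(\nu),\mu)$, where $\ell_\nu(f) = f(\nu)e^{-\frac{\alpha}{2}|\nu|^2}$ is a bounded linear functional with $\|\ell_\nu\|\lesssim w(Q_1(\nu))^{-1/p}$ (by Step~1 with $z=\nu$), and $m_\nu(\zeta) = \zeta\cdot e^{\frac{\alpha}{2}|\cdot|^2}$ is a bounded map with $\|m_\nu\|\lesssim \mu(Q_1(\nu))^{1/p}$. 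The catch is that $S_\nu f$ is $f|_{Q_1(\nu)}$, not the constant $f(\nu)$; so this exact factorization is false, and I must be more careful. The honest fix: any bounded operator $T:X\to Y$ with $X^*$ having cotype $2$ and $Y$ of cotype $2$ is already $1$-summing once it is $2$-summing --- but here the clean statement I want is that an order bounded operator into an $L^p$-space whose domain embeds (with norm control) into $F^\infty_\alpha(D(\nu,2))$ is $1$-summing, because composing with the inclusion $F^p_{\alpha,w}(D(\nu,2))\hookrightarrow F^\infty_\alpha(D(\nu,2))$ (bounded with norm $\lesssim w(Q_1(\nu))^{-1/p}$ by Lemma~\ref{pointwise}) reduces matters to $1$-summing maps out of $F^\infty_\alpha(D(\nu,2))$, whose unit ball is a \emph{normal family} bounded pointwise by $1$, so the operator $g\mapsto g|_{Q_1(\nu)}$ from $F^\infty_\alpha(D(\nu,2))$ into $L^p_\alpha(Q_1(\nu),\mu)$ is order bounded into $L^1$-type space... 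I expect the cleanest correct argument is: factor $S_\nu = S_\nu' \circ \iota$ with $\iota: F^p_{\alpha,w}(D(\nu,2))\to F^\infty_{\alpha}(D(\nu,2))$, $\|\iota\|\lesssim w(Q_1(\nu))^{-1/p}$, and note $S_\nu': F^\infty_\alpha(D(\nu,2))\to L^p_\alpha(Q_1(\nu),\mu)$ is order bounded into an $L^p(\mu)$-space of a \emph{finite} measure, but more to the point, since $F^\infty_\alpha(D(\nu,2))$ is isometric to a subspace of $C(\overline{D(\nu,2)})$ (a $\mathcal{C}$-space), every order bounded operator from it into $L^p(\mu)$ is $1$-summing by \cite[Proposition~5.18]{DJT} combined with the fact that operators from $\mathcal{C}$-spaces are $1$-summing iff order bounded. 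Tracking the norm through $\iota$ multiplies the bound $\mu(Q_1(\nu))^{1/p}$ by $w(Q_1(\nu))^{-1/p}$, giving exactly $\pi_1(S_\nu)\lesssim (\mu(Q_1(\nu))/w(Q_1(\nu)))^{1/p}$.

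\textbf{Main obstacle.} The heart of the matter, and where I expect to spend the most care, is Step~3: passing from the automatic $p$-summing bound to the claimed $1$-summing bound. The key structural fact enabling it is that the source space, after rescaling by $w(Q_1(\nu))^{-1/p}$, sits inside the sup-norm space $F^\infty_\alpha$ on a fixed disk --- a space of continuous functions --- for which order boundedness into any $L^p(m)$ is equivalent to $1$-summing; everything else (Steps~1--2) is a routine application of Lemma~\ref{pointwise}, Lemma~\ref{esti} and Lemma~\ref{order}, together with the finiteness hypothesis $\mu(Q_1(\nu))<\infty$.
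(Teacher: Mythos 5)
Your Steps 1--2 are sound and coincide with the paper's use of Lemmas \ref{esti} and \ref{pointwise}, but Step 3 --- which you rightly single out as the crux --- rests on a false principle. It is not true that an order bounded operator from a $C(K)$-space (or a subspace of one) into $L^p(m)$ is $1$-summing when $p>1$, nor that operators from $\mathcal{C}$-spaces are $1$-summing iff order bounded: the canonical inclusion $C[0,1]\to L^2[0,1]$ is order bounded (dominated by the constant function $1$), yet testing it against a partition of unity by $n$ tent functions gives $\sup_{|a_k|\le1}\|\sum_ka_kf_k\|_\infty\le1$ while $\sum_k\|f_k\|_{L^2}\asymp n^{1/2}$, so it is not $1$-summing. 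Order boundedness into $L^p$ only yields $p$-summability (that is precisely Lemma \ref{order}), so your argument as written produces $\pi_p(S_\nu)$, not $\pi_1(S_\nu)$; since the lemma is invoked in Theorem \ref{suff2} with $p\ge2$ and $r$ as small as $1$, this loss is fatal. (A smaller issue: your inclusion $\iota$ into $F^\infty_\alpha(D(\nu,2))$ is not controlled near $\partial D(\nu,2)$, because Lemma \ref{pointwise} at such $z$ needs $f$ on $D(z,t)\not\subset D(\nu,2)$; the target must be a strictly smaller disk.)

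The missing ingredient is an intermediate $L^1$-space, and this is exactly where analyticity enters irreplaceably. The paper factors $S_\nu=T_\nu\circ I_d\circ R_\nu$ with $R_\nu:F^p_{\alpha,w}(D(\nu,2))\to F^\infty_\alpha(D(\nu,1))$, $f\mapsto w(Q_1(\nu))^{1/p}f$, bounded with norm $\lesssim1$ (your Step 1, restricted to the smaller disk); then $I_d:F^\infty_\alpha(D(\nu,1))\to F^1_{\alpha,w/\hat{w}(\nu)}(D(\nu,1))$ is $1$-summing with $\pi_1\lesssim1$ by the elementary sup-over-signs computation
$$\sum_{j}\|f_j\|_{F^1_{\alpha,w/\hat{w}(\nu)}(D(\nu,1))}
\le\int_{D(\nu,1)}\Big(\sup_{|a_j|\le1}\Big|\sum_{j}a_jf_j(z)\Big|\Big)e^{-\frac{\alpha}{2}|z|^2}\frac{w(z)}{w(Q_1(\nu))}dA(z)
\le\frac{w(D(\nu,1))}{w(Q_1(\nu))}\asymp1,$$
which works because the target is an $L^1$-space of total mass $\asymp1$; and finally $T_\nu:F^1_{\alpha,w/\hat{w}(\nu)}(D(\nu,1))\to L^p_\alpha(Q_1(\nu),\mu)$, $f\mapsto w(Q_1(\nu))^{-1/p}f$, is bounded with norm $\lesssim(\mu(Q_1(\nu))/w(Q_1(\nu)))^{1/p}$, because Lemma \ref{pointwise} converts an $L^1$-average of $f$ over $D(z,1/4)$ into a pointwise bound --- the step that has no analogue for generic continuous functions and the reason the general $C(K)\to L^p$ statement fails. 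The ideal property then gives $\pi_1(S_\nu)\le\|T_\nu\|\,\pi_1(I_d)\,\|R_\nu\|\lesssim(\mu(Q_1(\nu))/w(Q_1(\nu)))^{1/p}$, which is the claimed bound.
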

\begin{proof}
Define the operators $R_{\nu}$ and $T_{\nu}$ respectively by
\begin{align*}
R_{\nu}:F^p_{\alpha,w}(D(\nu,2))&\to F^{\infty}_{\alpha}(D(\nu,1)),\\
f&\mapsto w(Q_1(\nu))^{1/p}f|_{D(\nu,1)}
\end{align*}
and
\begin{align*}
T_{\nu}:F^1_{\alpha,w/\hat{w}(\nu)}(D(\nu,1))&\to L^p_{\alpha}(Q_1(\nu),\mu),\\
f&\mapsto w(Q_1(\nu))^{-1/p}f|_{Q_1(\nu)}.
\end{align*}
Then the following commutative diagram holds:
\begin{equation*}
\xymatrix{
  F^p_{\alpha,w}(D(\nu,2)) \ar[d]_{R_{\nu}} \ar[r]^{S_{\nu}}
                & L^p_{\alpha}(Q_1(\nu),\mu)   \\
  F^{\infty}_{\alpha}(D(\nu,1)) \ar[r]_{I_d}
                & F^1_{\alpha,w/\hat{w}(\nu)}(D(\nu,1)) \ar[u]^{T_{\nu}}.        }
\end{equation*}
We claim that $R_{\nu}:F^p_{\alpha,w}(D(\nu,2))\to F^{\infty}_{\alpha}(D(\nu,1))$ and $T_{\nu}:F^1_{\alpha,w/\hat{w}(\nu)}(D(\nu,1))\to L^p_{\alpha}(Q_1(\nu),\mu)$ are both bounded. In fact, for any $f\in F^p_{\alpha,w}(D(\nu,2))$, by Lemmas \ref{esti} and \ref{pointwise},
\begin{align*}
\|R_{\nu}f\|_{F^{\infty}_{\alpha}(D(\nu,1))}
&=\sup_{z\in D(\nu,1)}w(Q_1(\nu))^{1/p}|f(z)|e^{-\frac{\alpha}{2}|z|^2}\\
&\lesssim\sup_{z\in D(\nu,1)}\frac{w(Q_1(\nu))^{1/p}}{w(D(z,1))^{1/p}}
  \left(\int_{D(z,1)}|f(u)|^pe^{-\frac{p\alpha}{2}|u|^2}w(u)dA(u)\right)^{1/p}\\
&\lesssim\left(\int_{D(\nu,2)}|f(u)|^pe^{-\frac{p\alpha}{2}|u|^2}w(u)dA(u)\right)^{1/p}
  =\|f\|_{F^p_{\alpha,w}(D(\nu,2))}.
\end{align*}
Hence $R_{\nu}:F^p_{\alpha,w}(D(\nu,2))\to F^{\infty}_{\alpha}(D(\nu,1))$ is bounded, and $$\|R_{\nu}\|\lesssim1.$$
For any $f\in F^1_{\alpha,w/\hat{w}(\nu)}$, using Lemmas \ref{esti} and \ref{pointwise} again yields that
\begin{align*}
&\|T_{\nu}f\|^p_{L^p_{\alpha}(Q_1(\nu),\mu)}\\
&\ =\frac{1}{w(Q_1(\nu))}\int_{Q_1(\nu)}|f(z)|^pe^{-\frac{p\alpha}{2}|z|^2}d\mu(z)\\
&\ \lesssim\frac{1}{w(Q_1(\nu))}\int_{Q_1(\nu)}\frac{1}{w(D(z,1/4))^p}
  \left(\int_{D(z,1/4)}|f(u)|e^{-\frac{\alpha}{2}|u|^2}w(u)dA(u)\right)^pd\mu(z)\\
&\ \lesssim\frac{\mu(Q_1(\nu))}{w(Q_1(\nu))}\left(\int_{D(\nu,1)}|f(u)|
  e^{-\frac{\alpha}{2}|u|^2}\frac{w(u)}{w(Q_1(\nu))}dA(u)\right)^p\\
&\ =\frac{\mu(Q_1(\nu))}{w(Q_1(\nu))}\|f\|^p_{F^1_{\alpha,w/\hat{w}(\nu)}(D(\nu,1))},
\end{align*}
which implies that $T_{\nu}:F^1_{\alpha,w/\hat{w}(\nu)}(D(\nu,1))\to L^p_{\alpha}(Q_1(\nu),\mu)$ is bounded, and
$$\|T_{\nu}\|\lesssim\left(\frac{\mu(Q_1(\nu))}{w(Q_1(\nu))}\right)^{1/p}.$$
Since $w(D(\nu,1))\asymp\hat{w}(\nu)$ due to Lemma \ref{esti}, we know that $I_d:F^{\infty}_{\alpha}(D(\nu,1))\to F^1_{\alpha,w/\hat{w}(\nu)}(D(\nu,1))$ is $1$-summing. In fact, for any finite sequence $\{f_j\}_{1\leq j\leq n}\subset F^{\infty}_{\alpha}(D(\nu,1))$ with
$$\sup_{|a_j|\leq1}\left\|\sum_{j=1}^na_jf_j\right\|_{F^{\infty}_{\alpha}(D(\nu,1))}\leq1,$$
we have that
\begin{align*}
	&\sum_{j=1}^n\|f_j\|_{F^1_{\alpha,w/\hat{w}(\nu)}(D(\nu,1))}\\
	&\ =\sum_{j=1}^n\int_{D(\nu,1)}|f_j(z)|e^{-\frac{\alpha}{2}|z|^2}\frac{w(z)}{w(Q_1(\nu))}dA(z)\\
	&\ =\frac{1}{w(Q_1(\nu))}\int_{D(\nu,1)}\left(\sup_{|a_j|\leq1}
	\left|\sum_{j=1}^{n}a_jf_j(z)\right|\right)e^{-\frac{\alpha}{2}|z|^2}w(z)dA(z)\\
	&\ \leq\frac{w(D(\nu,1))}{w(Q_1(\nu))}\asymp1.
\end{align*}
Hence $I_d:F^{\infty}_{\alpha}(D(\nu,1))\to F^1_{\alpha,w/\hat{w}(\nu)}(D(\nu,1))$ is $1$-summing, and
$$\pi_1\big(I_d:F^{\infty}_{\alpha}(D(\nu,1))\to F^1_{\alpha,w/\hat{w}(\nu)}(D(\nu,1))\big)\lesssim1.$$
Therefore, $S_{\nu}=T_{\nu}\circ I_d\circ R_{\nu}:F^p_{\alpha,w}(D(\nu,2))\to L^p_{\alpha}(Q_1(\nu),\mu)$ is $1$-summing, and
$$\pi_1(S_{\nu})\lesssim\left(\frac{\mu(Q_1(\nu))}{w(Q_1(\nu))}\right)^{1/p}.$$
The proof is finished.
\end{proof}

The following theorem is the last piece of the proof of Theorem \ref{main}.

\begin{theorem}\label{suff2}
Let $\alpha>0$, $p\geq2$, $w\in A^{\res}_{\infty}$, and let $\mu$ be a positive Borel measure on $\C$.
\begin{enumerate}[(1)]
  \item If $p'\leq r\leq p$ and $\widehat{\mu}_w\in L^{r/p}$, then $I_d:F^p_{\alpha,w}\to L^p_{\alpha}(\mu)$ is $r$-summing, and
      $$\pi_r\big(I_d:F^p_{\alpha,w}\to L^p_{\alpha}(\mu)\big)
      \lesssim\left(\int_{\C}\widehat{\mu}_w(z)^{r/p}dA(z)\right)^{1/r}.$$
  \item If $\widehat{\mu}_w\in L^{p'/p}$, then $I_d:F^p_{\alpha,w}\to L^p_{\alpha}(\mu)$ is $1$-summing. Consequently, for any $1\leq r\leq p'$, $I_d:F^p_{\alpha,w}\to L^p_{\alpha}(\mu)$ is $r$-summing, and
      \begin{align*}
      \pi_r\big(I_d:F^p_{\alpha,w}\to L^p_{\alpha}(\mu)\big)
      &\leq\pi_1\big(I_d:F^p_{\alpha,w}\to L^p_{\alpha}(\mu)\big)\\
      &\lesssim\left(\int_{\C}\widehat{\mu}_w(z)^{p'/p}dA(z)\right)^{1/p'}.
      \end{align*}
\end{enumerate}
\end{theorem}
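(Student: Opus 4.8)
The plan is to localize the embedding and reduce everything to block diagonal operators together with the scalar multiplier estimates of Lemma~\ref{mul}. Put $X_{\nu}:=F^{p}_{\alpha,w}(D(\nu,2))$ and $Y_{\nu}:=L^{p}_{\alpha}(Q_{1}(\nu),\mu)$, and let $\mathcal{S}:=\bigoplus_{\nu\in\Z^{2}}S_{\nu}:\oplus_{l^{p}(\Z^{2})}X_{\nu}\to\oplus_{l^{p}(\Z^{2})}Y_{\nu}$ be the block diagonal operator built from the operators $S_{\nu}$ of Lemma~\ref{Snu}. Since the squares $\{Q_{1}(\nu)\}_{\nu}$ tile $\C$, one checks directly that $I_{d}=\mathfrak{D}_{1}^{-1}\circ\mathcal{S}\circ\mathfrak{D}_{2}$, where $\mathfrak{D}_{1}$ is the isometric isomorphism and $\mathfrak{D}_{2}$ the bounded discretization introduced above; by the ideal property this gives $\pi_{r}(I_{d})\lesssim\pi_{r}(\mathcal{S})$ for every $r\ge1$. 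Lemma~\ref{Snu} yields $\pi_{1}(S_{\nu})\lesssim\lambda_{\nu}$, hence $\pi_{r}(S_{\nu})\le\pi_{1}(S_{\nu})\lesssim\lambda_{\nu}$ for all $r\ge1$, and Lemma~\ref{dis} converts the norms $\|\lambda\|_{l^{s}(\Z^{2})}$ that will appear into $\big(\int_{\C}\widehat{\mu}_{w}(z)^{s/p}\,dA(z)\big)^{1/s}$. It therefore suffices to bound $\pi_{r}(\mathcal{S})$; the two parts differ only in which power of $\|\lambda\|$ can be extracted.

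For part~(1) — indeed for every $1\le r\le p$ — the estimate is elementary. As $r/p\le1$, the map $t\mapsto t^{r/p}$ is subadditive, so $\|\{\xi_{\nu}\}\|^{r}_{\oplus_{l^{p}}Y_{\nu}}\le\sum_{\nu}\|\xi_{\nu}\|^{r}_{Y_{\nu}}$. Hence, for any measure space $(\Omega,\Sigma,m)$ and any measurable $F=\{F_{\nu}\}:\Omega\to\oplus_{l^{p}(\Z^{2})}X_{\nu}$, Theorem~\ref{sum-T} applied to each $S_{\nu}$ gives
\[
\int_{\Omega}\|\mathcal{S}\circ F\|^{r}\,dm\le\sum_{\nu}\int_{\Omega}\|S_{\nu}\circ F_{\nu}\|^{r}_{Y_{\nu}}\,dm\le\sum_{\nu}\pi_{r}(S_{\nu})^{r}\sup_{\phi\in B_{X_{\nu}^{*}}}\int_{\Omega}|\phi\circ F_{\nu}|^{r}\,dm .
\]
Extending each $\phi\in B_{X_{\nu}^{*}}$ by zero on the remaining coordinates produces a functional on $\oplus_{l^{p}(\Z^{2})}X_{\nu}$ of the same norm, so the last supremum is dominated by $\sup_{\xi\in B_{X^{*}}}\int_{\Omega}|\xi\circ F|^{r}\,dm$. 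By Theorem~\ref{sum-T} again, $\pi_{r}(\mathcal{S})^{r}\lesssim\sum_{\nu}\lambda_{\nu}^{r}=\|\lambda\|^{r}_{l^{r}(\Z^{2})}$, and Lemma~\ref{dis} gives $\pi_{r}(I_{d})\lesssim\big(\int_{\C}\widehat{\mu}_{w}(z)^{r/p}\,dA(z)\big)^{1/r}$, proving~(1). (This bound is not sharp for $r<p'$, which is precisely why part~(2) needs a finer argument; for $r\ge p$ it merely reproduces Corollary~\ref{suff1}.)

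For part~(2) it is enough, by monotonicity of $\pi_{r}$, to treat $r=1$, where the subadditivity argument only yields $\|\lambda\|_{l^{1}(\Z^{2})}$. We use the factorization $S_{\nu}=T_{\nu}\circ I_{d,\nu}\circ R_{\nu}$ of Lemma~\ref{Snu}, with $I_{d,\nu}$ the formal inclusion $F^{\infty}_{\alpha}(D(\nu,1))\hookrightarrow F^{1}_{\alpha,w/\hat{w}(\nu)}(D(\nu,1))$, $\|R_{\nu}\|\lesssim1$ and $\|T_{\nu}\|\lesssim\lambda_{\nu}$. Writing $T_{\nu}=\lambda_{\nu}\widetilde{T}_{\nu}$ with $\|\widetilde{T}_{\nu}\|\lesssim1$, we get $\mathcal{S}=\big(\bigoplus_{\nu}\widetilde{T}_{\nu}\big)\circ\big(\bigoplus_{\nu}\lambda_{\nu}I_{d,\nu}\big)\circ\big(\bigoplus_{\nu}R_{\nu}\big)$, hence $\pi_{1}(\mathcal{S})\lesssim\pi_{1}\big(\bigoplus_{\nu}\lambda_{\nu}I_{d,\nu}\big)$. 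After absorbing the Gaussian weight and renormalizing $w\,dA$ to a probability measure $\rho_{\nu}$ on $D(\nu,1)$, the spaces $F^{\infty}_{\alpha}(D(\nu,1))$ and $F^{1}_{\alpha,w/\hat{w}(\nu)}(D(\nu,1))$ embed isometrically into $L^{\infty}(\rho_{\nu})$ and $L^{1}(\rho_{\nu})$, with $I_{d,\nu}$ corresponding, up to a constant comparable to $1$ by Lemma~\ref{esti}, to the formal inclusion $j_{\nu}:L^{\infty}(\rho_{\nu})\to L^{1}(\rho_{\nu})$ restricted to a subspace. The matter is thereby reduced to the \emph{block diagonal analogue of Lemma~\ref{mul}(1)}: for $p\ge2$ the operator $\bigoplus_{\nu}\lambda_{\nu}j_{\nu}:\oplus_{l^{p}(\Z^{2})}L^{\infty}(\rho_{\nu})\to\oplus_{l^{p}(\Z^{2})}L^{1}(\rho_{\nu})$ is $1$-summing with $\pi_{1}\big(\bigoplus_{\nu}\lambda_{\nu}j_{\nu}\big)\lesssim\|\lambda\|_{l^{p'}(\Z^{2})}$.

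To prove this last statement I would mimic the proof of Lemma~\ref{mul}(1). Since $\big\|\big(\bigoplus_{\nu}\lambda_{\nu}j_{\nu}\big)x\big\|=\big\|M_{\lambda}\{\|x_{\nu}\|_{L^{1}(\rho_{\nu})}\}_{\nu}\big\|_{l^{p}(\Z^{2})}$ and $M_{\lambda}$ is $1$-summing on $l^{p}(\Z^{2})$ with $\pi_{1}(M_{\lambda})\asymp\|\lambda\|_{l^{p'}(\Z^{2})}$, testing the definition of $1$-summability on a finite family $\{x^{(k)}\}_{k}$, using that the nonnegativity of $\{\|x^{(k)}_{\nu}\|_{L^{1}(\rho_{\nu})}\}_{\nu}$ collapses the weak $\ell^{1}$-norm, and that $\sum_{k}\|x^{(k)}_{\nu}\|_{L^{1}(\rho_{\nu})}\le\big\|\sum_{k}|x^{(k)}_{\nu}|\big\|_{L^{\infty}(\rho_{\nu})}$, one is left with the inequality
\[
\Big(\sum_{\nu}\big\|\textstyle\sum_{k}|x^{(k)}_{\nu}|\big\|^{p}_{L^{\infty}(\rho_{\nu})}\Big)^{1/p}\ \lesssim\ \sup_{|\epsilon_{k}|\le1}\Big\|\sum_{k}\epsilon_{k}x^{(k)}\Big\|_{\oplus_{l^{p}(\Z^{2})}L^{\infty}(\rho_{\nu})}.
\]
I expect this to be the main obstacle: its content is that for $L^{\infty}$-valued blocks the \emph{per-block} optimal choice of unimodular signs is controlled by a single \emph{shared} choice, a phenomenon that fails for arbitrary blocks and must be read off from the geometry of $L^{\infty}$/$L^{1}$ exactly as in the scalar case. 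Granting it, $\pi_{1}(\mathcal{S})\lesssim\|\lambda\|_{l^{p'}(\Z^{2})}$, so that Lemma~\ref{dis} gives $\pi_{1}\big(I_{d}:F^{p}_{\alpha,w}\to L^{p}_{\alpha}(\mu)\big)\lesssim\big(\int_{\C}\widehat{\mu}_{w}(z)^{p'/p}\,dA(z)\big)^{1/p'}$, and the asserted estimate for $1\le r\le p'$ follows from $\pi_{r}(I_{d})\le\pi_{1}(I_{d})$. Apart from this one block diagonal estimate, all remaining steps — the factorization $I_{d}=\mathfrak{D}_{1}^{-1}\mathcal{S}\mathfrak{D}_{2}$, the passage through the local spaces in Lemma~\ref{Snu}, and the bookkeeping via Lemma~\ref{dis} — are routine.
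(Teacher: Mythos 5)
Your overall architecture is the same as the paper's: factor $I_d=\mathfrak{D}_1^{-1}\circ S\circ\mathfrak{D}_2$ through the block diagonal operator built from the $S_\nu$ of Lemma \ref{Snu}, and reduce to summing norms of block diagonal operators between $l^p$-sums. For part (1) your argument is correct and in fact more self-contained than the paper's: where the paper invokes \cite[Corollary 4.8]{LR18} to get $\pi_r(S)\asymp(\sum_\nu\pi_r(S_\nu)^r)^{1/r}$, you prove the (only needed) upper bound directly from Theorem \ref{sum-T} and the subadditivity of $t\mapsto t^{r/p}$ for $r\le p$. That part stands.

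Part (2) has a genuine gap, and the route you sketch for closing it cannot work. The paper obtains $\pi_1(S)\lesssim(\sum_\nu\pi_1(S_\nu)^{p'})^{1/p'}$ by citing \cite[Corollary 9.2]{HJLL}, which is exactly the block diagonal analogue of Lemma \ref{mul}(1) for blocks factoring through $L^\infty\to L^1$ inclusions; this is a substantive result with its own proof, not a formal consequence of the scalar multiplier estimate. Your proposed derivation first replaces each $x^{(k)}$ by the nonnegative scalar sequence $\{\|x^{(k)}_\nu\|_{L^1(\rho_\nu)}\}_\nu$ and then appeals to $\pi_1(M_\lambda)\asymp\|\lambda\|_{l^{p'}}$; this discards the cancellation in the weak $\ell^1$-norm on the right-hand side, and the inequality you are left with,
\begin{equation*}
\Big(\sum_{\nu}\big\|\textstyle\sum_{k}|x^{(k)}_{\nu}|\big\|^{p}_{L^{\infty}(\rho_{\nu})}\Big)^{1/p}\ \lesssim\ \sup_{|\epsilon_{k}|\le1}\Big\|\sum_{k}\epsilon_{k}x^{(k)}\Big\|_{\oplus_{l^{p}}L^{\infty}(\rho_{\nu})},
\end{equation*}
is false even for one-dimensional blocks. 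Take $p=2$, $N$ indices $\nu$, each $x^{(k)}_\nu$ the constant function $h_{k\nu}/\sqrt{N}$ with $(h_{k\nu})$ an $N\times N$ Hadamard matrix, $1\le k\le N$: the left-hand side equals $N$, while orthogonality of the rows gives $\sup_{|\epsilon_k|\le1}\big(\sum_\nu|\sum_k\epsilon_kh_{k\nu}|^2/N\big)^{1/2}=\sqrt{N}$, so the ratio blows up. (Consistently, in this example $\|\lambda\|_{l^{p'}}\cdot\sqrt{N}=N$ matches the true value of $\sum_k\|M_\lambda x^{(k)}\|_{l^2}$, so $1$-summability itself is not contradicted; it is your intermediate step that loses a factor $\sqrt{N}$.) The fix is not a sign-selection lemma for $L^\infty$-blocks but an argument that exploits the order-boundedness of the $j_\nu$ \emph{before} collapsing to scalars --- e.g.\ a Pietsch-domination/factorization argument of the type carried out in the proof of \cite[Corollary 9.2]{HJLL} (or the corresponding statements in \cite{LR18}). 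As written, the $1\le r\le p'$ case is not established.
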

\begin{proof}
Let $S:\oplus_{l^p(\Z^2)}F^p_{\alpha,w}(D(\nu,2))\to\oplus_{l^p(\Z^2)}L^p_{\alpha}(Q_1(\nu),\mu)$ be the natural block diagonal mapping whose entries are the operators $S_{\nu}$ defined in Lemma \ref{Snu}. Then we have the following commutative diagram:
\begin{equation*}
\xymatrix{
  F^p_{\alpha,w} \ar[d]_{\mathfrak{D}_2} \ar[r]^{I_d}
                & L^p_{\alpha}(\mu) \ar[d]^{\mathfrak{D}_1}  \\
  \oplus_{l^p(\Z^2)}F^p_{\alpha,w}(D(\nu,2))  \ar[r]_{S}
                & \oplus_{l^p(\Z^2)}L^p_{\alpha}(Q_1(\nu),\mu).             }
\end{equation*}
Since $\mathfrak{D}_1$ is an isometric isomorphism, we have
$$I_d=\mathfrak{D}_1^{-1}\circ S\circ\mathfrak{D}_2.$$

(1) If $p'\leq r\leq p$ and $\widehat{\mu}_w\in L^{r/p}$, then by Lemma \ref{dis},
$$\sum_{\nu\in\Z^2}\left(\frac{\mu(Q_1(\nu))}{w(Q_1(\nu))}\right)^{r/p}<\infty,$$
which, in conjunction with Lemma \ref{Snu} and \cite[Corollary 4.8]{LR18}, implies that $S:\oplus_{l^p(\Z^2)}F^p_{\alpha,w}(D(\nu,2))\to\oplus_{l^p(\Z^2)}L^p_{\alpha}(Q_1(\nu),\mu)$ is $r$-summing. Moreover,
\begin{align*}
\pi_r(S)&\asymp\left(\sum_{\nu\in\Z^2}\pi_r(S_{\nu})^r\right)^{1/r}
\leq\left(\sum_{\nu\in\Z^2}\pi_1(S_{\nu})^r\right)^{1/r}\\
&\lesssim\left(\sum_{\nu\in\Z^2}\left(\frac{\mu(Q_1(\nu))}{w(Q_1(\nu))}\right)^{r/p}\right)^{1/r}
\asymp\left(\int_{\C}\widehat{\mu}_w(z)^{r/p}dA(z)\right)^{1/r}.
\end{align*}
Therefore, $I_d:F^p_{\alpha,w}\to L^p_{\alpha}(\mu)$ is $r$-summing, and
$$\pi_r\big(I_d:F^p_{\alpha,w}\to L^p_{\alpha}(\mu)\big)\lesssim\pi_r(S)
\lesssim\left(\int_{\C}\widehat{\mu}_w(z)^{r/p}dA(z)\right)^{1/r}.$$

(2) If $\widehat{\mu}_w\in L^{p'/p}$, then by Lemmas \ref{dis}, \ref{Snu} and \cite[Corollary 9.2]{HJLL}, we obtain that $S:\oplus_{l^p(\Z^2)}F^p_{\alpha,w}(D(\nu,2))\to\oplus_{l^p(\Z^2)}L^p_{\alpha}(Q_1(\nu),\mu)$ is $1$-summing. Moreover,
\begin{align*}
\pi_1(S)\lesssim\left(\sum_{\nu\in\Z^2}\pi_1(S_{\nu})^{p'}\right)^{1/p'}
&\lesssim\left(\sum_{\nu\in\Z^2}\left(\frac{\mu(Q_1(\nu))}{w(Q_1(\nu))}\right)^{p'/p}\right)^{1/p'}\\
&\asymp\left(\int_{\C}\widehat{\mu}_w(z)^{p'/p}dA(z)\right)^{1/p'}.
\end{align*}
Therefore, $I_d:F^p_{\alpha,w}\to L^p_{\alpha}(\mu)$ is $1$-summing, and
$$\pi_1\big(I_d:F^p_{\alpha,w}\to L^p_{\alpha}(\mu)\big)
\lesssim\left(\int_{\C}\widehat{\mu}_w(z)^{p'/p}dA(z)\right)^{1/p'},$$
which completes the proof.
\end{proof}

Combining Corollary \ref{suff1} with Theorem \ref{suff2} finishes the sufficiency part of Theorem \ref{main} for the case $p\geq2$.

As a direct consequence of Theorem \ref{main}, we can obtain the following monotonicity for embeddings on standard Fock spaces (see also \cite[Theorem 8.4]{LR18} and \cite[Proposition 11.1]{HJLL}).

\begin{corollary}
Let $\alpha>0$, $1<q\leq p<\infty$, $r\geq1$, and let $\mu$ be a positive Borel measure on $\C$. If $I_d:F^p_{\alpha}\to L^p_{\alpha}(\mu)$ is $r$-summing, then $I_d:F^q_{\beta}\to L^q_{\beta}(\mu)$ is $r$-summing for any $\beta>0$.
\end{corollary}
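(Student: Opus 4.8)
The plan is to deduce this from Theorem \ref{main} applied with the trivial weight $w\equiv1$, which lies in $A^{\res}_p\cap A^{\res}_q$ for all $p,q>1$. The key point is that in the unweighted setting the characterizing condition of Theorem \ref{main} becomes the membership of one fixed $\mu$-sequence — carrying no reference to $\alpha$ — in a scale of spaces $l^s(\Z^2)$, and the passage from index $p$ to index $q$ then reduces to the trivial inclusion $l^{s_1}(\Z^2)\subset l^{s_2}(\Z^2)$ valid for $s_1\leq s_2$.

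Concretely, for $w\equiv1$ one has $\widehat{\mu}_1(z)=\mu(D(z,1))/\pi$, so Lemma \ref{dis} (applied with any $\eta>0$, the weight factors being constant) yields, for every $s>0$,
$$\widehat{\mu}_1\in L^s\qquad\Longleftrightarrow\qquad a:=\{\mu(Q_1(\nu))\}_{\nu\in\Z^2}\in l^s(\Z^2),$$
a condition that does not depend on the Gaussian exponent. By Theorem \ref{main} (using part (1) when the relevant exponent is $<2$ and part (2) otherwise), the operator $I_d:F^p_{\alpha}\to L^p_{\alpha}(\mu)$ is $r$-summing if and only if $a\in l^{s_p}(\Z^2)$, where
$$s_p=\begin{cases}2/p,& 1<p<2,\\ p'/p,& p\geq2,\ 1\leq r\leq p',\\ r/p,& p\geq2,\ p'\leq r\leq p,\\ 1,& p\geq2,\ r\geq p,\end{cases}$$
and, with $q$ in place of $p$, $I_d:F^q_{\beta}\to L^q_{\beta}(\mu)$ is $r$-summing if and only if $a\in l^{s_q}(\Z^2)$.

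Hence the corollary follows once we verify that $s_q\geq s_p$ whenever $1<q\leq p<\infty$ and $r\geq1$, because then $l^{s_p}(\Z^2)\subset l^{s_q}(\Z^2)$ and
$$I_d:F^p_{\alpha}\to L^p_{\alpha}(\mu)\ \text{is}\ r\text{-summing}\ \Longrightarrow\ a\in l^{s_p}(\Z^2)\subset l^{s_q}(\Z^2)\ \Longrightarrow\ I_d:F^q_{\beta}\to L^q_{\beta}(\mu)\ \text{is}\ r\text{-summing}$$
for every $\beta>0$. This monotonicity of the exponent is the only real obstacle, and it is a routine case check: if $q\leq p<2$, then $s_q=2/q\geq2/p=s_p$; if $q<2\leq p$, then $s_q=2/q>1\geq s_p$, since each of $p'/p=1/(p-1)$, $r/p$ (this branch occurring only when $r\leq p$) and $1$ is at most $1$; and if $2\leq q\leq p$, one writes $s_t=t^{-1}\max\{t',\min(r,t)\}$ for $t\geq2$ and checks that $t\mapsto s_t$ is non-increasing on $[2,\infty)$ for each fixed $r\geq1$, by distinguishing the regimes determined by the position of $t$ relative to $r$ and to $r/(r-1)$. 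Assembling these facts completes the proof.
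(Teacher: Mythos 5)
Your proposal is correct and is essentially the argument the paper intends: the corollary is stated there as a direct consequence of Theorem \ref{main}, and your writeup supplies exactly the omitted verification, namely that for $w\equiv1$ the characterizing condition is $\{\mu(Q_1(\nu))\}\in l^{s_p}(\Z^2)$ independently of the Gaussian parameter, together with the monotonicity $s_q\geq s_p$ for $q\leq p$ (which indeed holds, since $s_t=\max\{t'/t,\min(r/t,1)\}$ is non-increasing on $[2,\infty)$ and exceeds the value $2/t>1\geq s_p$ on $(1,2)$). No gaps.
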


\section{Applications}\label{app}

In this section, we apply Theorem \ref{main} to establish some results for differentiation and integration operators, Volterra-type operators and composition operators.

\subsection{Differentiation and integration operators}

Given a nonnegative integer $k$, we define the differentiation operator $D^{(k)}$ by
$$D^{(k)}f:=f^{(k)},\quad f\in\h(\C).$$
If $k$ is a negative integer, then the integration operator $D^{(k)}$ is defined for entire functions $f$ by
$$D^{(k)}f:=D^{(k+1)}\circ D^{(-1)}f,$$
where
$$D^{(-1)}f(z):=\int_0^zf(\zeta)d\zeta,\quad z\in\C.$$

In \cite{CFP}, using the Littlewood--Paley formula of $F^p_{\alpha,w}$, Cascante, F\`{a}brega and Pel\'{a}ez characterized the boundedness of the differentiation and integration operators $D^{(k)}:F^p_{\alpha,w}\to L^p_{\alpha}(\mu)$ for all $\alpha,p>0$, $k\in\mathbb{Z}$ and $w\in A^{\res}_{\infty}$. We now use Theorem \ref{main} to characterize the $r$-summability of $D^{(k)}:F^p_{\alpha,w}\to L^p_{\alpha}(\mu)$. For a weight $w$ on $\C$ and $\gamma\in\mathbb{R}$, we write
$$w_{\gamma}(z):=\frac{w(z)}{(1+|z|)^{\gamma}},\quad z\in\C.$$

\begin{theorem}
Let $\alpha>0$, $k\in\mathbb{Z}$, and let $\mu$ be a positive Borel measure on $\C$ such that
$$\sup_{l\in\{0\}\cup\mathbb{N}}\int_{\C}|z|^{lp}e^{-\frac{p\alpha}{2}|z|^2}d\mu(z)<\infty.$$
\begin{enumerate}[(1)]
  \item Let $1<p<2$ and $w\in A^{\res}_p$. Then for any $r\geq1$, $D^{(k)}:F^p_{\alpha,w}\to L^p_{\alpha}(\mu)$ is $r$-summing if and only if
      $$\int_{\C}\left(\frac{\mu(D(z,1))}{w_{kp}(D(z,1))}\right)^{2/p}dA(z)<\infty.$$
  \item Let $p\geq2$ and $w\in A^{\res}_{\infty}$. Then
  \begin{enumerate}[(i)]
    \item for $1\leq r\leq p'$, $D^{(k)}:F^p_{\alpha,w}\to L^p_{\alpha}(\mu)$ is $r$-summing if and only if
        $$\int_{\C}\left(\frac{\mu(D(z,1))}{w_{kp}(D(z,1))}\right)^{p'/p}dA(z)<\infty.$$
    \item for $p'\leq r\leq p$, $D^{(k)}:F^p_{\alpha,w}\to L^p_{\alpha}(\mu)$ is $r$-summing if and only if
        $$\int_{\C}\left(\frac{\mu(D(z,1))}{w_{kp}(D(z,1))}\right)^{r/p}dA(z)<\infty.$$
    \item for $r\geq p$, $D^{(k)}:F^p_{\alpha,w}\to L^p_{\alpha}(\mu)$ is $r$-summing if and only if
        $$\int_{\C}\frac{\mu(D(z,1))}{w_{kp}(D(z,1))}dA(z)<\infty.$$
  \end{enumerate}
\end{enumerate}
\end{theorem}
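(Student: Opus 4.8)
The plan is to reduce the statement to Theorem~\ref{main} by factoring $D^{(k)}$ through a Carleson embedding on a Fock space with a shifted weight. Write $w_{kp}(z)=w(z)(1+|z|)^{-kp}$ as in the statement. The starting point is the Littlewood--Paley description of $F^p_{\alpha,w}$ used in \cite{CFP}: for every $k\in\Z$ the operators
\[
D^{(k)}\colon F^p_{\alpha,w}\longrightarrow F^p_{\alpha,w_{kp}}
\qquad\text{and}\qquad
D^{(-k)}\colon F^p_{\alpha,w_{kp}}\longrightarrow F^p_{\alpha,w}
\]
are bounded (for $k\ge0$ the second map is the explicit $k$-fold integration, for $k<0$ it is $|k|$-fold differentiation). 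For $k\ge0$ the boundedness of $D^{(k)}$ above can also be checked directly from Lemma~\ref{pointwise} and the Cauchy integral formula, choosing a contour of radius comparable to $(1+|z|)^{-1}$; in general it is part of the Littlewood--Paley machinery of \cite{CFP}. I would also record the elementary fact that $w_{kp}\in A^{\res}_p$ (resp.\ $A^{\res}_\infty$) whenever $w\in A^{\res}_p$ (resp.\ $A^{\res}_\infty$): on a fixed square $Q_t(z)$ the factor $(1+|z|)^{-kp}$ is comparable to a constant, so multiplying $w$ by it changes the quantity $\mathcal C_{p,t}$ only by a bounded factor.

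The core step is to show that $D^{(k)}\colon F^p_{\alpha,w}\to L^p_{\alpha}(\mu)$ is $r$-summing if and only if $I_d\colon F^p_{\alpha,w_{kp}}\to L^p_{\alpha}(\mu)$ is $r$-summing. One implication is immediate: $D^{(k)}\colon F^p_{\alpha,w}\to L^p_{\alpha}(\mu)$ factors as $I_d\circ D^{(k)}$ with the first $D^{(k)}\colon F^p_{\alpha,w}\to F^p_{\alpha,w_{kp}}$ bounded, so the ideal property passes $r$-summability from $I_d$ to $D^{(k)}$. For the converse, when $k\ge0$ we have $D^{(k)}\circ D^{(-k)}=\mathrm{Id}$ on $\h(\C)$, hence $I_d=D^{(k)}\circ D^{(-k)}$ as maps $F^p_{\alpha,w_{kp}}\to L^p_{\alpha}(\mu)$, with $D^{(-k)}\colon F^p_{\alpha,w_{kp}}\to F^p_{\alpha,w}$ bounded, and again the ideal property applies. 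When $k=-m$ with $m\ge1$, Taylor's formula with integral remainder gives $D^{(-m)}\circ D^{(m)}=\mathrm{Id}-P$, where $P$ sends an entire function to its Taylor polynomial of degree $<m$; thus, as operators $F^p_{\alpha,w_{-mp}}\to L^p_{\alpha}(\mu)$, $I_d=D^{(-m)}\circ D^{(m)}+P$. The first summand is $r$-summing once $D^{(-m)}\colon F^p_{\alpha,w}\to L^p_{\alpha}(\mu)$ is, because $D^{(m)}\colon F^p_{\alpha,w_{-mp}}\to F^p_{\alpha,w}$ is bounded; and $P=\sum_{j=0}^{m-1}e_j^*\otimes z^j$ has finite rank, hence is $r$-summing with $\pi_r(P)\le\sum_{j=0}^{m-1}\|e_j^*\|\,\|z^j\|_{L^p_{\alpha}(\mu)}$, where $e_j^*$ is the (bounded) $j$-th Taylor coefficient functional. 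This last quantity is finite exactly because of the hypothesis $\sup_{l}\int_{\C}|z|^{lp}e^{-\frac{p\alpha}{2}|z|^2}d\mu(z)<\infty$ — which is the only place that hypothesis is needed, and only in the integration-operator case $k<0$.

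Finally, I would apply Theorem~\ref{main} to the embedding $I_d\colon F^p_{\alpha,w_{kp}}\to L^p_{\alpha}(\mu)$ (legitimate, since $w_{kp}$ lies in $A^{\res}_p$, resp.\ $A^{\res}_\infty$). Its conclusion is that $I_d$ is $r$-summing in the various ranges of $r$ precisely when $\widehat{\mu}_{w_{kp}}\in L^{2/p}$, $L^{p'/p}$, $L^{r/p}$ or $L^{1}$, where $\widehat{\mu}_{w_{kp}}(z)=\mu(D(z,1))/w_{kp}(D(z,1))$; these are verbatim the four integral conditions in the statement. Combining this with the equivalence of the previous paragraph finishes the proof. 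The main obstacle is really the input from \cite{CFP}: one must be confident that the Littlewood--Paley estimates, hence the boundedness of $D^{(\pm k)}$ between $F^p_{\alpha,w}$ and $F^p_{\alpha,w_{kp}}$, hold for the non-radial $A^{\res}_\infty$-weights in play and transform correctly under the shift $w\mapsto w_{kp}$; granting that, everything else is the factorization bookkeeping above, the one genuinely new point being the finite-rank correction $P$ in the integration case.
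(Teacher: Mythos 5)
Your argument is correct and follows essentially the same route as the paper: reduce to the equivalence of the $r$-summability of $D^{(k)}\colon F^p_{\alpha,w}\to L^p_{\alpha}(\mu)$ and of $I_d\colon F^p_{\alpha,w_{kp}}\to L^p_{\alpha}(\mu)$ via the Littlewood--Paley boundedness of $D^{(\pm k)}$ from \cite[Theorem 1.1]{CFP}, handle the integration case $k<0$ with the finite-rank Taylor-polynomial correction controlled by the moment hypothesis on $\mu$, and then invoke Theorem \ref{main} for the weight $w_{kp}$. The only cosmetic difference is that you phrase the reduction through the ideal property of $\Pi_r$ applied to operator factorizations, whereas the paper runs the same computation directly on finite sequences; your explicit remark that $w_{kp}$ stays in $A^{\res}_p$ (resp.\ $A^{\res}_{\infty}$) is a point the paper leaves implicit.
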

\begin{proof}
It is sufficient to prove that $D^{(k)}:F^p_{\alpha,w}\to L^p_{\alpha}(\mu)$ is $r$-summing if and only if $I_d:F^p_{\alpha,w_{kp}}\to L^p_{\alpha}(\mu)$ is $r$-summing. Assume first that $I_d:F^p_{\alpha,w_{kp}}\to L^p_{\alpha}(\mu)$ is $r$-summing. Choose any finite sequence $\{f_j\}_{1\leq j\leq n}\subset F^p_{\alpha,w}$ such that
$$\sup_{\{c_j\}\in B_{l^{r'}}}\left\|\sum_{j=1}^nc_jf_j\right\|_{F^p_{\alpha,w}}\leq1.$$
We want to show
$$\left(\sum_{j=1}^n\|D^{(k)}f_j\|^r_{L^p_{\alpha}(\mu)}\right)^{1/r}\leq C$$
for some absolute constant $C>0$. Note that by \cite[Theorem 1.1]{CFP}, $D^{(k)}f_j\in F^p_{\alpha,w_{kp}}$, and for any $\{c_j\}\in B_{l^{r'}}$,
$$\left\|\sum_{j=1}^nc_jD^{(k)}f_j\right\|_{F^p_{\alpha,w_{kp}}}
=\left\|D^{(k)}\left(\sum_{j=1}^nc_jf_j\right)\right\|_{F^p_{\alpha,w_{kp}}}
\lesssim\left\|\sum_{j=1}^nc_jf_j\right\|_{F^p_{\alpha,w}}\leq1.$$
Hence the $r$-summability of $I_d:F^p_{\alpha,w_{kp}}\to L^p_{\alpha}(\mu)$ implies
$$\left(\sum_{j=1}^{n}\left\|D^{(k)}f_j\right\|^r_{L^p_{\alpha}(\mu)}\right)^{1/r}\lesssim
\pi_r\big(I_d:F^p_{\alpha,w_{kp}}\to L^p_{\alpha}(\mu)\big),$$
which is exactly what we need.

Conversely, assume that $D^{(k)}:F^p_{\alpha,w}\to L^p_{\alpha}(\mu)$ is $r$-summing. Choose $\{f_j\}_{1\leq j\leq n}\subset F^p_{\alpha,w_{kp}}$ such that
$$\sup_{\{c_j\}\in B_{l^{r'}}}\left\|\sum_{j=1}^nc_jf_j\right\|_{F^p_{\alpha,w_{kp}}}\leq1.$$
It follows again from \cite[Theorem 1.1]{CFP} that $D^{(-k)}f_j\in F^p_{\alpha,w}$, and for $\{c_j\}\in B_{l^{r'}}$,
$$\left\|\sum_{j=1}^nc_jD^{(-k)}f_j\right\|_{F^p_{\alpha,w}}\lesssim1.$$
Hence the $r$-summability of $D^{(k)}:F^p_{\alpha,w}\to L^p_{\alpha}(\mu)$ gives
$$\left(\sum_{j=1}^n\left\|D^{(k)}\left(D^{(-k)}f_j\right)\right\|^r_{L^p_{\alpha}(\mu)}\right)^{1/r}\lesssim
\pi_r\big(D^{(k)}:F^p_{\alpha,w}\to L^p_{\alpha}(\mu)\big).$$
In the case $k\geq0$, it is clear that
\begin{align*}
\left(\sum_{j=1}^n\|f_j\|^r_{L^p_{\alpha}(\mu)}\right)^{1/r}&=
\left(\sum_{j=1}^n\left\|D^{(k)}\left(D^{(-k)}f_j\right)\right\|^r_{L^p_{\alpha}(\mu)}\right)^{1/r}\\
&\lesssim\pi_r\big(D^{(k)}:F^p_{\alpha,w}\to L^p_{\alpha}(\mu)\big).
\end{align*}
Hence $I_d:F^p_{\alpha,w_{kp}}\to L^p_{\alpha}(\mu)$ is $r$-summing. In the case $k<0$, write
$$C_{\mu,\alpha,k}:=\max_{l=0,1,\dots,-k-1}\left(\int_{\C}|z|^{lp}e^{-\frac{p\alpha}{2}|z|^2}d\mu(z)\right)^{1/p},$$
and let $\delta^{(l)}_0$ be the point evaluation of the $l$-th derivative at $0$, that is,
$$\delta^{(l)}_0f:=f^{(l)}(0),\quad f\in\h(\C).$$
Then, noting that for any $f\in\h(\C)$,
$$f(z)=D^{(k)}\left(D^{(-k)}f\right)(z)+\sum_{l=0}^{-k-1}\frac{f^{(l)}(0)}{l!}z^l,$$
we obtain that
\begin{align*}
&\left(\sum_{j=1}^n\|f_j\|^r_{L^p_{\alpha}(\mu)}\right)^{1/r}\\
&\ =\left(\sum_{j=1}^n\left\|D^{(k)}\left(D^{(-k)}f_j\right)(z)+
  \sum_{l=0}^{-k-1}\frac{f_j^{(l)}(0)}{l!}z^l\right\|^r_{L^p_{\alpha}(\mu)}\right)^{1/r}\\
&\ \leq\left(\sum_{j=1}^n\left\|D^{(k)}\left(D^{(-k)}f_j\right)\right\|^r_{L^p_{\alpha}(\mu)}\right)^{1/r}+
  \sum_{l=0}^{-k-1}\left(\sum_{j=1}^n\frac{|f_j^{(l)}(0)|^r}{(l!)^r}\|z^l\|^r_{L^p_{\alpha}(\mu)}\right)^{1/r}\\
&\ \lesssim\pi_r\big(D^{(k)}:F^p_{\alpha,w}\to L^p_{\alpha}(\mu)\big)
  +C_{\mu,\alpha,k}\sum_{l=0}^{-k-1}\frac{1}{l!}\left(\sum_{j=1}^n|\delta^{(l)}_0(f_j)|^r\right)^{1/r}\\
&\ \leq\pi_r\big(D^{(k)}:F^p_{\alpha,w}\to L^p_{\alpha}(\mu)\big)
  +C_{\mu,\alpha,k}\sum_{l=0}^{-k-1}\frac{1}{l!}\|\delta^{(l)}_0\|_{(F^p_{\alpha,w_{kp}})^*}.
\end{align*}
Therefore, $I_d:F^p_{\alpha,w_{kp}}\to L^p_{\alpha}(\mu)$ is $r$-summing, and
\begin{align*}
\pi_r\big(I_d:F^p_{\alpha,w_{kp}}&\to L^p_{\alpha}(\mu)\big)\\
&\lesssim
\pi_r\big(D^{(k)}:F^p_{\alpha,w}\to L^p_{\alpha}(\mu)\big)
  +C_{\mu,\alpha,k}\sum_{l=0}^{-k-1}\frac{1}{l!}\|\delta^{(l)}_0\|_{(F^p_{\alpha,w_{kp}})^*}.
\end{align*}
The proof is complete.
\end{proof}

\subsection{Volterra-type operators}

Given $g\in\h(\C)$, the Volterra-type operator $J_g$ is defined for entire functions $f$ by
$$J_gf(z):=\int_0^zf(\zeta)g'(\zeta)d\zeta,\quad z\in\C.$$
The boundedness and compactness of the operators $J_g$ acting on the Fock spaces $F^p_{\alpha}$ were investigated in \cite{Co12,Hu13}. We here consider the $r$-summability of $J_g$. Using Theorem \ref{main} and \cite[Theorem 1.1]{CFP}, one can easily establish the integrability conditions for the $r$-summability of $J_g$ acting on $F^p_{\alpha,w}$. However, in the standard Fock space setting, we can obtain a more explicit result.

\begin{theorem}\label{Vol}
Let $\alpha>0$, $p>1$, $r\geq1$, and let $g\in\h(\C)$.
\begin{enumerate}[(1)]
  \item If $p\leq2$, or $p>2$ and $r\leq2$, then $J_g$ is $r$-summing on $F^p_{\alpha}$ if and only if $g$ is constant.
  \item If $p>2$ and $r>2$, then $J_g$ is $r$-summing on $F^p_{\alpha}$ if and only if $g(z)=az+b$ for some $a,b\in\C$.
\end{enumerate}
\end{theorem}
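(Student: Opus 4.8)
The plan is to reduce the $r$-summability of $J_g$ on $F^p_\alpha$ to a Carleson-type embedding to which Theorem \ref{main} applies, and then to extract from the resulting integrability condition the sharp restriction on the growth of $g$. Recall the standard identity $\|J_g f\|^p_{F^p_\alpha}\asymp\int_\C|f(z)|^p|g'(z)|^p e^{-\frac{p\alpha}{2}|z|^2}dA(z)$ up to the value $f(0)$ (more precisely, one works with the operator $f\mapsto J_gf$ and notes that $(J_gf)'=fg'$, so the relevant quantity is controlled by the $L^p_\alpha(d\mu_g)$-norm of $f$ where $d\mu_g(z):=|g'(z)|^p dA(z)$, together with a finite-rank correction coming from the point evaluation $\delta_0$, handled exactly as in the integration-operator theorem above). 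Thus $J_g$ is $r$-summing on $F^p_\alpha$ if and only if the embedding $I_d:F^p_\alpha\to L^p_\alpha(\mu_g)$ is $r$-summing, and by Theorem \ref{main} (with $w\equiv1$, so $\widehat{\mu_g}(z)=\mu_g(D(z,1))=\int_{D(z,1)}|g'|^p dA$) this is governed by the finiteness of $\int_\C\widehat{\mu_g}(z)^{s}\,dA(z)$ for the appropriate exponent $s$: $s=2/p$ when $p\le 2$; $s=p'/p$ when $p>2$ and $r\le p'$; $s=r/p$ when $p>2$, $p'\le r\le p$; and $s=1$ when $p>2$, $r\ge p$. Since $p'\le 2\le p$ when $p\ge 2$, the threshold $r=2$ is exactly where the exponent $s$ first reaches the value $1$: for $r\le 2$ we have $s<1$ (either $s=2/p<1$ or $s=p'/p<1$), while for $r>2$ we have $s\ge 1$.

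Next I would analyze the local average $\widehat{\mu_g}(z)=\int_{D(z,1)}|g'|^p\,dA$. By the subharmonicity of $|g'|^p$ and a standard mean-value estimate, $\widehat{\mu_g}(z)\asymp|g'(\zeta)|^p$ for any point $\zeta$ within bounded distance of $z$; in particular $\widehat{\mu_g}(z)\asymp|g'(z)|^p$ up to comparability on unit scales, and more usefully $\widehat{\mu_g}(z)\gtrsim\sup_{|\zeta-z|\le 1/2}|g'(\zeta)|^p$. The key dichotomy is then: $\int_\C|g'(z)|^{ps}\,dA(z)<\infty$ for an exponent $s$ with $0<s<1$ forces $g'\equiv 0$, because an entire function whose $L^{ps}$-norm against area measure is finite and which is not identically zero has, by the same subharmonic mean-value inequality, $|g'(z)|^{ps}\gtrsim c>0$ on a set of infinite measure (indeed $|g'|$ is bounded below on a disk once $g'\not\equiv0$, hence the integral over $\C$ diverges since $ps>0$ gives no decay). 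Thus in case (1) the condition $\int_\C\widehat{\mu_g}^{\,s}\,dA<\infty$ with $s<1$ holds if and only if $g'\equiv0$, i.e. $g$ is constant. For case (2), $s\ge 1$ and the condition becomes $\int_\C\widehat{\mu_g}(z)^{s}\,dA(z)<\infty$ with $s\ge1$; writing this out, $\int_\C\big(\int_{D(z,1)}|g'|^p\big)^s dA(z)<\infty$. When $s=1$ this is exactly $\int_\C|g'|^p\,dA<\infty$ (by Fubini and the finite overlap of unit disks), and an entire function $g'$ lies in $L^p(\C,dA)$ if and only if $g'$ is constant (a nonconstant entire function grows, and a nonzero constant is already not integrable unless... in fact $|g'|^p$ integrable over $\C$ forces $g'\equiv 0$ — wait, here one wants $g'$ constant: the point is that $\int_\C|g'|^p<\infty$ with $g'$ entire forces $g'\equiv0$ if $p<\infty$, so $s=1$ would again give $g$ constant). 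I must be careful: the correct statement for case (2) is that $J_g$ is $r$-summing iff $g(z)=az+b$, i.e. $g'$ is a constant; so the relevant condition cannot be $\int|g'|^p<\infty$. This forces a re-examination: the norm identity for $J_g$ on Fock space is $\|J_gf\|^p_{F^p_\alpha}\asymp\int_\C|f(z)|^p|g(z)|^p\,d\lambda_\alpha$ with a \emph{different} weight, or the embedding measure is $d\mu_g(z)=|g'(z)|^p(1+|z|)^{-p}dA(z)$ coming from the Littlewood--Paley description — this is the point I would pin down first from \cite{Co12,Hu13} and \cite[Theorem 1.1]{CFP}.

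The main obstacle is therefore the precise reduction: identifying the correct pull-back measure $\mu_g$ so that $J_g:F^p_\alpha\to F^p_\alpha$ being $r$-summing is equivalent to $I_d:F^p_\alpha\to L^p_\alpha(\mu_g)$ being $r$-summing, including the correct power of $(1+|z|)$. Once that is settled, the measure-theoretic part is routine: apply Theorem \ref{main} to get $\int_\C\widehat{\mu_g}(z)^s\,dA(z)<\infty$ with $s$ as above, use Lemma \ref{esti}-type unit-scale comparisons to rewrite $\widehat{\mu_g}(z)\asymp|g'(z)|^p(1+|z|)^{-p}$, and then invoke the elementary fact that for an entire $g$, $\int_\C|g'(z)|^{ps}(1+|z|)^{-ps}\,dA(z)<\infty$ holds iff $g$ is a polynomial of degree $\le 1$ when $s\ge 1$ (the weight $(1+|z|)^{-ps}$ with $ps\le p$... ) and iff $g$ is constant when $s<1$. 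The verification of this last fact — a Liouville-type growth argument showing that $|g'(z)|\lesssim(1+|z|)$ forces $g'$ affine and combined with integrability forces $g'$ constant in the $s\ge1$ case, while the stricter $s<1$ integrability kills even the linear term — is the one place requiring genuine (if short) work, and I would carry it out by comparing the integrability exponent $ps$ against the degree-one growth $(1+|z|)^p$ that a nonconstant affine $g'$ would contribute. The dividing line $r=2$ emerges precisely because $s=2/p$ and $s=p'/p$ are both $<1$ for $r\le 2$, whereas $s=r/p\ge p'/p$ becomes $\ge p'/p$ and crosses $1$ at $r=2$ is not quite right either — rather, $s\ge 1 \iff r\ge p \ge 2$ fails for $2<r<p$; so I would need $s=r/p$ with $2\le r$ still potentially $<1$, meaning the honest threshold statement is that $\int\widehat{\mu_g}^{\,r/p}$ tolerates an affine $g'$ exactly when $r/p$ is large enough relative to the weight exponent, and reconciling this with the clean ``$r>2$'' cutoff in the theorem is the subtle bookkeeping I would attend to most carefully.
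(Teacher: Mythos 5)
Your overall strategy coincides with the paper's: after correcting your initial guess for the measure, you arrive at the reduction the paper uses, namely that by \cite[Proposition 1]{Co12} one has $\|J_gf\|^p_{F^p_{\alpha}}\asymp\|f\|^p_{L^p_{\alpha}(\mu_{g,p})}$ with $d\mu_{g,p}(z)=|g'(z)|^p(1+|z|)^{-p}dA(z)$, so that the $r$-summability of $J_g$ is equivalent to that of $I_d:F^p_{\alpha}\to L^p_{\alpha}(\mu_{g,p})$, to which Theorem \ref{main} applies. You also correctly identify the two remaining ingredients: subharmonicity of $|g'|$ to force $g$ affine, and an integrability computation to decide whether the linear term survives.

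However, the step you defer as ``subtle bookkeeping'' is precisely where your argument breaks, and the heuristic you offer for it is wrong. You claim the dichotomy is governed by whether the outer exponent $s$ satisfies $s<1$ or $s\geq1$. This does not match the theorem: for $p\leq2$ one has $s=2/p\geq1$ (not $<1$ as you assert), yet $g$ must be constant; and for $p>2$ with $2<r<p$ one has $s=r/p<1$, yet affine $g$ is allowed. The correct mechanism is the \emph{total} power of $(1+|z|)$ after substituting an affine $g$. Indeed, once subharmonicity and the finiteness of $\int_{\C}\widehat{\mu}_{g,p}(z)^{s}\,dA(z)$ force $|g'(z)|/(1+|z|)\to0$ and hence $g(z)=az+b$, one has for $a\neq0$ that $\widehat{\mu}_{g,p}(z)\asymp|a|^p(1+|z|)^{-p}$, so the condition becomes $\int_{\C}(1+|z|)^{-ps}\,dA(z)<\infty$, which holds if and only if $ps>2$. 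Reading off $ps$ from Theorem \ref{main} gives $ps=2$ when $p\leq2$; $ps=p'<2$ when $p>2$ and $r\leq p'$; $ps=r$ when $p'\leq r\leq p$; and $ps=p>2$ when $r\geq p$. Hence $ps>2$ exactly when $p>2$ and $r>2$, which is the clean cutoff in the statement. Without this computation (and with the incorrect $s\lessgtr1$ criterion in its place), your proposal does not establish either direction of the theorem in the boundary regimes $p\leq2$ and $2<r<p$, so it has a genuine gap at the decisive step.
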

\begin{proof}
Suppose that $p\leq2$ and $J_g$ is $r$-summing on $F^p_{\alpha}$ for some $r\geq1$. Note that for any $f\in F^p_{\alpha}$, \cite[Proposition 1]{Co12} yields
$$\|J_gf\|^p_{F^p_{\alpha}}
\asymp\int_{\C}|f(z)|^pe^{-\frac{p\alpha}{2}|z|^2}\frac{|g'(z)|^p}{(1+|z|)^p}dA(z)
=\|f\|^p_{L^p_{\alpha}(\mu_{g,p})},$$
where $d\mu_{g,p}(z)=\frac{|g'(z)|^p}{(1+|z|)^p}dA(z)$. Hence the embedding $I_d:F^p_{\alpha}\to L^p_{\alpha}(\mu_{g,p})$ is $r$-summing. Then by Theorem \ref{main},
\begin{equation}\label{finite}
\int_{\C}\mu_{g,p}(D(z,1))^{2/p}dA(z)
=\int_{\C}\left(\int_{D(z,1)}\frac{|g'(u)|^p}{(1+|u|)^p}dA(u)\right)^{2/p}dA(z)<\infty,
\end{equation}
which together with the sub-harmonic property of $|g'|$ implies
$$\frac{|g'(\xi)|^2}{(1+|\xi|)^2}\lesssim
\int_{D(\xi,1)}\left(\int_{D(z,1)}\frac{|g'(u)|^p}{(1+|u|)^p}dA(u)\right)^{2/p}dA(z)\to0$$
as $|\xi|\to\infty$. Therefore, $g(z)=az+b$ for some $a,b\in\C$. If $g$ is not constant, i.e. $a\neq0$, then
$$\int_{\C}\left(\int_{D(z,1)}\frac{|g'(u)|^p}{(1+|u|)^p}dA(u)\right)^{2/p}dA(z)
\asymp|a|^2\int_{\C}\frac{dA(z)}{(1+|z|)^2}=\infty,$$
which contradicts \eqref{finite} and finishes the proof for the case $p\leq2$. The other cases are similar and so are omitted.
\end{proof}

Using Theorem \ref{Vol}, we can investigate the boundedness of $J_g$ acting from weak to strong vector-valued Fock spaces. It was proved in \cite[Corollary 4.6]{CW21} that for any complex infinite-dimensional Banach space $X$, $J_g:wF^2_{\alpha}(X)\to F^2_{\alpha}(X)$ is bounded if and only if $g$ is constant, and if $p>2$, then $J_g:wF^p_{\alpha}(X)\to F^p_{\alpha}(X)$ is bounded if and only if $g(z)=az+b$ for some $a,b\in\C$. In particular, it was pointed out in \cite[Remark 4.8]{CW21} that $F^p_{\alpha}(X)\subsetneq wF^p_{\alpha}(X)$ for any $p\geq2$, $\alpha>0$ and any complex infinite-dimensional Banach space $X$. We here generalize these results to the case $1<p<2$.

\begin{theorem}\label{Vol-sum}
Let $1<p<2$, $\alpha>0$, $g\in\h(\C)$, and let $X$ be a complex infinite-dimensional Banach space. Then the following assertions hold.
\begin{enumerate}[(1)]
  \item $J_g:wF^p_{\alpha}(X)\to F^p_{\alpha}(X)$ is bounded if and only if $g$ is constant.
  \item $F^p_{\alpha}(X)\subsetneq wF^p_{\alpha}(X)$.
\end{enumerate}
\end{theorem}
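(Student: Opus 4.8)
The plan is to deduce both parts from the $2$-summability of suitable scalar operators on $F^p_\alpha$, combining Theorem \ref{Vol} (for (1)) and the Dvoretzky--Rogers theorem (for (2)) with a vectorization trick built on Dvoretzky's theorem. The starting observation is that, since $X$ is infinite-dimensional, for every $n\in\N$ one may pick $x_1,\dots,x_n\in X$ spanning a subspace $2$-isomorphic to $l^2_n$, so that $\big\|\sum_{j=1}^na_jx_j\big\|_X\asymp\big(\sum_j|a_j|^2\big)^{1/2}$ with constants independent of $n$. Working with $l^2_n$ rather than $l^p_n$ is crucial here: $l^p_n$ need not embed uniformly into a general $X$.

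For (1), the ``if'' direction is trivial ($g$ constant gives $J_g=0$). For the converse, assume $J_g:wF^p_\alpha(X)\to F^p_\alpha(X)$ is bounded and fix a finite family $\{f_j\}_{1\le j\le n}\subset F^p_\alpha$ with $\sup_{\xi\in B_{(F^p_\alpha)^*}}\big(\sum_j|\xi(f_j)|^2\big)^{1/2}\le1$. Set $f:=\sum_{j=1}^nx_jf_j$ with $x_1,\dots,x_n$ as above. On the one hand, the Cauchy--Schwarz inequality in the dual pairing of $F^p_\alpha$ (cf. Lemma \ref{dual} with $w\equiv1$), together with the upper $l^2_n$-bound, gives $\|f\|_{wF^p_\alpha(X)}=\sup_{x^*\in B_{X^*}}\big\|\sum_jx^*(x_j)f_j\big\|_{F^p_\alpha}\lesssim\sup_{x^*\in B_{X^*}}\big(\sum_j|x^*(x_j)|^2\big)^{1/2}\lesssim1$. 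On the other hand, writing $h_j:=J_gf_j$ and applying the lower $l^2_n$-bound pointwise in $z$,
\[
\|J_gf\|^p_{F^p_\alpha(X)}=\int_\C\Big\|\sum_jx_jh_j(z)\Big\|^p_Xe^{-\frac{p\alpha}{2}|z|^2}\,dA(z)\asymp\int_\C\Big(\sum_j|h_j(z)|^2\Big)^{p/2}e^{-\frac{p\alpha}{2}|z|^2}\,dA(z),
\]
and since $0<p/2<1$ the reverse Minkowski inequality for the $L^{p/2}$ quasi-norm (with respect to $e^{-\frac{p\alpha}{2}|z|^2}dA$) bounds the last integral below by $\big(\sum_j\|h_j\|^2_{F^p_\alpha}\big)^{p/2}$. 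Combining the two estimates with the boundedness of $J_g$ yields $\big(\sum_j\|J_gf_j\|^2_{F^p_\alpha}\big)^{1/2}\lesssim\|J_g:wF^p_\alpha(X)\to F^p_\alpha(X)\|$, with absolute implied constant. Hence $J_g$ is $2$-summing on $F^p_\alpha$, and Theorem \ref{Vol}(1) (applicable since $1<p<2$) forces $g$ to be constant.

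For (2), the inclusion $F^p_\alpha(X)\subseteq wF^p_\alpha(X)$ is norm-decreasing and always holds; if it were an equality, then, both spaces being complete, the closed graph theorem would make the inclusion $wF^p_\alpha(X)\to F^p_\alpha(X)$ bounded. Running the estimate of (1) verbatim with $J_g$ replaced by $\mathrm{id}_{F^p_\alpha}$ (so now $h_j=f_j$) would then show that $\mathrm{id}_{F^p_\alpha}$ is $2$-summing, which is impossible for the infinite-dimensional space $F^p_\alpha$ by the Dvoretzky--Rogers theorem. Therefore $F^p_\alpha(X)\subsetneq wF^p_\alpha(X)$.

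The main obstacle, and the place where the hypothesis $p<2$ is genuinely used, is the lower bound $\int_\C(\sum_j|h_j|^2)^{p/2}e^{-\frac{p\alpha}{2}|z|^2}dA\gtrsim(\sum_j\|h_j\|^2_{F^p_\alpha})^{p/2}$: it is exactly the reverse Minkowski (concavity) inequality for the exponent $p/2\in(0,1)$, and it is what converts the vector-valued norm of $J_gf$ into a clean $l^2$-sum of the scalar norms $\|J_gf_j\|_{F^p_\alpha}$, landing us precisely in the range of $2$-summing operators. For $p\ge2$ this concavity is lost, which is why the cotype-$2$ argument of \cite{CW21} does not descend below $p=2$ and a new device --- here Theorem \ref{Vol}, and through it Theorem \ref{main} --- is required. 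The remaining care is purely bookkeeping: the only constants entering the estimates come from Dvoretzky's theorem and from the reverse Minkowski inequality, both uniform in $n$, so the final bound is a genuine estimate on $\pi_2$.
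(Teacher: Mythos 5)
Your proof is correct and follows essentially the same route as the paper: vectorize via Dvoretzky's theorem, use the reverse Minkowski inequality for the exponent $p/2<1$ to extract $2$-summability of $J_g$ on $F^p_{\alpha}$, and invoke Theorem \ref{Vol}. The only (harmless) deviations are that you bound $\|J_gf\|_{F^p_{\alpha}(X)}$ directly from the definition of the norm rather than through the vector-valued Littlewood--Paley formula of \cite[Theorem 4.1]{CW21}, and that your part (2) is argued self-containedly via the closed graph theorem and the Dvoretzky--Rogers theorem instead of citing \cite[Remark 4.8]{CW21}.
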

\begin{proof}
(1) Suppose that $J_g:wF^p_{\alpha}(X)\to F^p_{\alpha}(X)$ is bounded. In view of Theorem \ref{Vol}, it is sufficient to prove that $J_g$ is $2$-summing on $F^p_{\alpha}$. Note that $(F^p_{\alpha})^*=F^{p'}_{\alpha}$ under the pairing $\langle\cdot,\cdot\rangle_{\alpha}$. Fix a positive integer $n$ and suppose that $\{f_j\}_{1\leq j\leq n}\subset F^p_{\alpha}$ with
$$\sup_{h\in B_{F^{p'}_{\alpha}}}\left(\sum_{j=1}^n|\langle f_j,h\rangle_{\alpha}|^2\right)\leq1.$$
We may use the Dvoretzky theorem (see \cite[Theorem 19.1]{DJT}) to find a linear embedding $T_n:l^n_2\to X$  such that for any $\{c_j\}_{1\leq j\leq n}\in l^n_2$,
\begin{equation}\label{Dvo}
\frac{1}{2}\left(\sum_{j=1}^n|c_j|^2\right)^{1/2}\leq\left\|\sum_{j=1}^nc_jT_n(e_j)\right\|_X
\leq\left(\sum_{j=1}^n|c_j|^2\right)^{1/2},
\end{equation}
where $(e_1,\dots,e_n)$ is some fixed orthonormal basis of $l^n_2$. Define
$$F_n(z)=\sum_{j=1}^nf_j(z)T_n(e_j),\quad z\in\C.$$
Then $F_n\in wF^p_{\alpha}(X)$ and $\|F_n\|_{wF^p_{\alpha}(X)}\leq1$ (see page 10 of \cite{Bl20}). The boundedness of $J_g:wF^p_{\alpha}(X)\to F^p_{\alpha}(X)$ together with \cite[Theorem 4.1]{CW21} and \eqref{Dvo} gives that
\begin{align*}
\|J_g\|^p_{wF^p_{\alpha}(X)\to F^p_{\alpha}(X)}
&\geq\|J_gF_n\|^p_{F^p_{\alpha}(X)}\\
&\asymp\int_{\C}\left\|\sum_{j=1}^nf_j(z)T_n(e_j)\right\|^p_X|g'(z)|^pe^{-\frac{p\alpha}{2}|z|^2}
  \frac{dA(z)}{(1+|z|)^p}\\
&\geq\frac{1}{2^p}\int_{\C}\left(\sum_{j=1}^n|f_j(z)|^2\right)^{p/2}|g'(z)|^pe^{-\frac{p\alpha}{2}|z|^2}
  \frac{dA(z)}{(1+|z|)^p}.
\end{align*}
Noting that $p/2\leq1$, combining Minkowski's inequality with \cite[Theorem 4.1]{CW21} yields that
\begin{align*} 
\|J_g\|^p_{wF^p_{\alpha}(X)\to F^p_{\alpha}(X)}
&\gtrsim\left(\sum_{j=1}^n\left(\int_{\C}|f_j(z)|^p|g'(z)|^pe^{-\frac{p\alpha}{2}|z|^2}
  \frac{dA(z)}{(1+|z|)^p}\right)^{2/p}\right)^{p/2}\\
&\asymp\left(\sum_{j=1}^n\|J_gf_j\|^2_{F^p_{\alpha}}\right)^{p/2},
\end{align*}
which implies that $J_g$ is $2$-summing on $F^p_{\alpha}$ and finishes the proof.

(2) Using (1), the desired result follows from the same argument as in \cite[Remark 4.8]{CW21}.
\end{proof}

\subsection{Composition operators}

The $r$-summing composition operators acting on the Fock spaces $F^p_{\alpha}$ are characterized as follows.

\begin{theorem}\label{com}
Let $\alpha>0$ and $\varphi\in\h(\C)$. Then for every $p>1$ and every $r\geq1$, $C_{\varphi}$ is $r$-summing on $F^p_{\alpha}$ if and only if $\varphi(z)=az+b$ for some $a,b\in\C$ with $|a|<1$.
\end{theorem}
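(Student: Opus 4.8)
The plan is to recast $r$-summability of $C_{\varphi}$ as an $r$-summing Carleson-embedding problem and then apply Theorem~\ref{main}. First I would introduce the positive Borel measure $\mu_{\varphi}$ on $\C$ defined by $\mu_{\varphi}(E):=\int_{\varphi^{-1}(E)}e^{\frac{p\alpha}{2}(|\varphi(z)|^2-|z|^2)}dA(z)$; a change of variables $w=\varphi(z)$ gives, for every entire $f$,
$$\|C_{\varphi}f\|_{F^p_{\alpha}}^p=\int_{\C}|f(\varphi(z))|^pe^{-\frac{p\alpha}{2}|z|^2}dA(z)=\int_{\C}|f(w)|^pe^{-\frac{p\alpha}{2}|w|^2}d\mu_{\varphi}(w)=\|f\|_{L^p_{\alpha}(\mu_{\varphi})}^p.$$
Since $\|C_{\varphi}f\|_{F^p_{\alpha}}=\|I_df\|_{L^p_{\alpha}(\mu_{\varphi})}$ for all $f\in F^p_{\alpha}$ and both spaces are complete, $C_{\varphi}$ is $r$-summing (respectively bounded, compact) on $F^p_{\alpha}$ exactly when $I_d:F^p_{\alpha}\to L^p_{\alpha}(\mu_{\varphi})$ is, with equal $r$-summing norms. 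Noting that $w\equiv1\in A^{\res}_p$ and $w(D(z,1))=\pi$, the governing quantity is $\widehat{(\mu_{\varphi})}_1(z)=\tfrac1\pi\int_{\varphi^{-1}(D(z,1))}e^{\frac{p\alpha}{2}(|\varphi(u)|^2-|u|^2)}dA(u)$.

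For necessity, if $C_{\varphi}$ is $r$-summing for some $p>1$ and $r\ge1$, then $I_d:F^p_{\alpha}\to L^p_{\alpha}(\mu_{\varphi})$ is $r$-summing, hence compact by the remark following Theorem~\ref{main}, hence $C_{\varphi}$ is compact on $F^p_{\alpha}$; by \cite{CMS} this forces $\varphi(z)=az+b$ with $|a|<1$. An alternative route avoiding \cite{CMS}: testing $C_{\varphi}$ on $K_z/\|K_z\|_{F^p_{\alpha}}$ and combining the pointwise bound of Lemma~\ref{pointwise} with $\|K_z\|_{F^p_{\alpha}}^p\asymp e^{\frac{p\alpha}{2}|z|^2}$ from Lemma~\ref{test} yields $\mathrm{Re}\big(\overline{\varphi(z)}u\big)\le\tfrac12(|z|^2+|u|^2)+C$ for all $z,u\in\C$, whence $|\varphi(z)|\le|z|+C'$, so $\varphi(z)=az+b$ with $|a|\le1$; and $|a|=1$ is excluded because then the formula above gives $\widehat{(\mu_{\varphi})}_1(z)\asymp e^{p\alpha\,\mathrm{Re}(\overline b z)-\frac{p\alpha}{2}|b|^2}$, which is bounded below by a positive constant on a half-plane and hence lies in no $L^{\rho}(\C,dA)$, contradicting Theorem~\ref{main}.

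For sufficiency, suppose $\varphi(z)=az+b$ with $|a|<1$. If $a=0$, then $C_{\varphi}f=f(b)\cdot 1$ has rank one and is $r$-summing for every $r\ge1$. If $a\neq0$, then $\varphi^{-1}(D(z,1))=D\big(\tfrac{z-b}{a},\tfrac1{|a|}\big)$, on which $|u|\ge\tfrac{|z|-|b|-1}{|a|}$; writing $|\varphi(u)|^2-|u|^2=(|a|^2-1)|u|^2+2\,\mathrm{Re}(\overline{au}\,b)+|b|^2$ and exploiting the decisive sign $|a|^2-1<0$, the concave quadratic in $|u|$ is bounded above by $\tfrac{|b|^2}{1-|a|^2}$, giving a uniform bound on $\widehat{(\mu_{\varphi})}_1$, while for large $|z|$ one gets $|\varphi(u)|^2-|u|^2\le-\tfrac{1-|a|^2}{|a|^2}(|z|-|b|-1)^2+2|b|(|z|+|b|+1)+|b|^2\le-c|z|^2$ for some $c>0$. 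Hence $\widehat{(\mu_{\varphi})}_1$ is bounded on $\C$ and decays like $e^{-\frac{p\alpha c}{2}|z|^2}$, so $\widehat{(\mu_{\varphi})}_1\in L^{\rho}(\C,dA)$ for every $\rho>0$ --- in particular in the Lebesgue space occurring in each case of Theorem~\ref{main}. Therefore $I_d:F^p_{\alpha}\to L^p_{\alpha}(\mu_{\varphi})$, equivalently $C_{\varphi}$, is $r$-summing for all $p>1$ and all $r\ge1$.

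The part I expect to require the most care is the reduction bookkeeping --- the normalizing factor $e^{\frac{p\alpha}{2}|z|^2}$ must be absorbed into $\mu_{\varphi}$ so that $\widehat{(\mu_{\varphi})}_1$ is exactly the quantity appearing in Theorem~\ref{main} --- together with the Gaussian-decay estimate, which works only because $|a|<1$ makes the quadratic in $|u|$ strictly concave with negative leading coefficient; when $|a|=1$ that gain disappears and the embedding fails to be compact, let alone $r$-summing. Everything else reduces to the lemmas and theorems already established above.
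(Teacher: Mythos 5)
Your proposal is correct and follows essentially the same route as the paper: the same pull-back measure $\mu_{\varphi}$ reduces the problem to the $r$-summability of $I_d:F^p_{\alpha}\to L^p_{\alpha}(\mu_{\varphi})$, and the verdict then comes from Theorem \ref{main} via the Gaussian decay of $\mu_{\varphi}(D(\cdot,1))$ when $|a|<1$ and its failure to lie in any $L^{\rho}$ when $|a|=1$. The only (harmless) deviations are cosmetic: you invoke compactness plus \cite{CMS} to get $|a|<1$ directly, where the paper uses boundedness and then rules out $|a|=1$, $b=0$ by the constant value $\mu_{\varphi}(D(z,1))=\pi$, and you dispatch the case $a=0$ by observing $C_{\varphi}$ is rank one rather than computing $\mu_{\varphi}(D(z,1))$ explicitly.
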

\begin{proof}
Fix $p>1$ and $r\geq1$. Let $\mu_{\varphi}$ be the pull-back measure defined by
$$\mu_{\varphi}(E):=\int_{\varphi^{-1}(E)}e^{-\frac{p\alpha}{2}(|z|^2-|\varphi(z)|^2)}dA(z)$$
for every Borel subset $E\subset\C$. Then for any $f\in F^p_{\alpha}$,
$$\|C_{\varphi}f\|^p_{F^p_{\alpha}}=\int_{\C}|f(\varphi(z))|^pe^{-\frac{p\alpha}{2}|z|^2}dA(z)
=\int_{\C}|f(z)|^pe^{-\frac{p\alpha}{2}|z|^2}d\mu_{\varphi}(z).$$
Therefore, $C_{\varphi}$ is $r$-summing on $F^p_{\alpha}$ if and only if the embedding $I_d:F^p_{\alpha}\to L^p_{\alpha}(\mu_{\varphi})$ is $r$-summing.

If $C_{\varphi}$ is $r$-summing on $F^p_{\alpha}$, then it is naturally bounded. Consequently, $\varphi(z)=az+b$ for some $a,b\in\C$ with $|a|<1$, or $|a|=1$ and $b=0$ (see \cite[Theorem 1]{CMS} or \cite[p. 89]{Zh}). To proceed, we split the proof into three cases.

{\bf Case 1.} $|a|=1$ and $b=0$. Then it is clear that for any $z\in\C$, $\mu_{\varphi}(D(z,1))=\pi$, so for any $q>0$, $\mu_{\varphi}(D(\cdot,1))\notin L^q$. Hence by Theorem \ref{main}, $C_{\varphi}$ is not $r$-summing on $F^p_{\alpha}$.

{\bf Case 2.} $0<|a|<1$. Then for any $z\in\C$,
\begin{align*}
\mu_{\varphi}(D(z,1))&=\int_{\varphi^{-1}(D(z,1))}e^{-\frac{p\alpha}{2}(|u|^2-|\varphi(u)|^2)}dA(u)\\
&=\int_{D(\frac{z-b}{a},\frac{1}{|a|})}e^{-\frac{p\alpha}{2}(|u|^2-|au+b|^2)}dA(u)\\
&\lesssim\int_{D(\frac{z-b}{a},\frac{1}{|a|})}e^{-\frac{p\alpha}{4}(1-|a|^2)|u|^2}dA(u)\\
&\lesssim\int_{D(\frac{z-b}{a},\frac{1}{|a|})}e^{-\frac{p\alpha}{8}(1-|a|^2)\left|\frac{z-b}{a}\right|^2}
  e^{-\frac{p\alpha}{4}(1-|a|^2)\left|u-\frac{z-b}{a}\right|^2}dA(u)\\
&\asymp e^{-\frac{p\alpha(1-|a|^2)}{8|a|^2}|z-b|^2},
\end{align*}
which implies $\mu_{\varphi}(D(\cdot,1))\in L^q$ for any $q>0$. Hence by Theorem \ref{main}, $C_{\varphi}$ is $r$-summing on $F^p_{\alpha}$.

{\bf Case 3.} $a=0$. Then for any $z\in\C$,
$$\mu_{\varphi}(D(z,1))
=\int_{\C}e^{-\frac{p\alpha}{2}(|u|^2-|b|^2)}dA(u)\cdot\chi_{D(b,1)}(z).$$
Hence $C_{\varphi}$ is $r$-summing on $F^p_{\alpha}$ by Theorem \ref{main}.
\end{proof}

Similarly as in the case of Volterra-type operators, we can use Theorem \ref{com} to obtain the boundedness of composition operators on vector-valued Fock spaces. Note that it was proved in \cite[Corollary 4.7]{CW21} that for any $p\geq2$, $\alpha>0$ and any complex infinite-dimensional Banach space $X$, $C_{\varphi}:wF^p_{\alpha}(X)\to F^p_{\alpha}(X)$ is bounded if and only if $\varphi(z)=az+b$ for some $a,b\in\C$ with $|a|<1$. We here show that this still holds for $1<p<2$.

\begin{theorem}
Let $1<p<2$, $\alpha>0$, $\varphi\in\h(\C)$, and let $X$ be a complex infinite-dimensional Banach space. Then $C_{\varphi}:wF^p_{\alpha}(X)\to F^p_{\alpha}(X)$ is bounded if and only if $\varphi(z)=az+b$ for some $a,b\in\C$ with $|a|<1$.
\end{theorem}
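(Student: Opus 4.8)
The plan is to adapt the scheme used for Volterra-type operators in Theorem~\ref{Vol-sum} to composition operators, taking Theorem~\ref{com} as the scalar-valued input.

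\emph{The ``if'' direction.} Suppose $\varphi(z)=az+b$ with $|a|<1$, and let $\mu_{\varphi}$ be the pull-back measure given by $\mu_{\varphi}(E)=\int_{\varphi^{-1}(E)}e^{-\frac{p\alpha}{2}(|z|^2-|\varphi(z)|^2)}\,dA(z)$. By the same change of variables as in the proof of Theorem~\ref{com}, for every $X$-valued entire function $f$ one has $\|C_{\varphi}f\|^p_{F^p_{\alpha}(X)}=\int_{\C}\|f(w)\|^p_X e^{-\frac{p\alpha}{2}|w|^2}\,d\mu_{\varphi}(w)$. Applying Lemma~\ref{pointwise} (with the weight $w\equiv1$) to each scalar function $x^{*}\circ f$ with $x^{*}\in B_{X^{*}}$ and then taking the supremum over $x^{*}$ yields the uniform pointwise bound $\|f(w)\|^p_X e^{-\frac{p\alpha}{2}|w|^2}\lesssim\|f\|^p_{wF^p_{\alpha}(X)}$ for all $w\in\C$. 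Since $|a|<1$ forces $|w|^2-|\varphi(w)|^2\gtrsim|w|^2$ as $|w|\to\infty$, the measure $\mu_{\varphi}$ is finite, so combining the two facts gives $\|C_{\varphi}f\|^p_{F^p_{\alpha}(X)}\lesssim\mu_{\varphi}(\C)\,\|f\|^p_{wF^p_{\alpha}(X)}$; hence $C_{\varphi}:wF^p_{\alpha}(X)\to F^p_{\alpha}(X)$ is bounded. This half does not use that $X$ is infinite-dimensional.

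\emph{The ``only if'' direction.} Assume $C_{\varphi}:wF^p_{\alpha}(X)\to F^p_{\alpha}(X)$ is bounded; by Theorem~\ref{com} it suffices to show that $C_{\varphi}$ is $2$-summing on $F^p_{\alpha}$. Fix a finite family $\{f_j\}_{1\le j\le n}\subset F^p_{\alpha}$ with $\sup_{h\in B_{F^{p'}_{\alpha}}}\sum_{j=1}^n|\langle f_j,h\rangle_{\alpha}|^2\le1$; by Lemma~\ref{dual} (with $w\equiv1$) this is equivalent to $\sup_{\{c_j\}\in B_{\ell^n_2}}\|\sum_j c_jf_j\|_{F^p_{\alpha}}\le1$. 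Using the Dvoretzky embedding $T_n:\ell^n_2\to X$ of~\eqref{Dvo}, put $F_n(z)=\sum_{j=1}^n f_j(z)T_n(e_j)$; as on page~10 of~\cite{Bl20} one checks that $F_n\in wF^p_{\alpha}(X)$ with $\|F_n\|_{wF^p_{\alpha}(X)}\le1$. The assumed boundedness, the lower estimate in~\eqref{Dvo}, and the change of variables to $\mu_{\varphi}$ then give
\[
\|C_{\varphi}\|^p_{wF^p_{\alpha}(X)\to F^p_{\alpha}(X)}\ \geq\ \|C_{\varphi}F_n\|^p_{F^p_{\alpha}(X)}\ \geq\ \frac{1}{2^p}\int_{\C}\Big(\sum_{j=1}^n|f_j(w)|^2\Big)^{p/2}e^{-\frac{p\alpha}{2}|w|^2}\,d\mu_{\varphi}(w).
\]
Because $p/2<1$, the ``$L^{p/2}$ quasinorm'' is superadditive (reverse Minkowski), so the last integral dominates $\big(\sum_{j=1}^n(\int_{\C}|f_j(w)|^p e^{-\frac{p\alpha}{2}|w|^2}d\mu_{\varphi}(w))^{2/p}\big)^{p/2}=\big(\sum_{j=1}^n\|C_{\varphi}f_j\|^2_{F^p_{\alpha}}\big)^{p/2}$. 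Hence $\pi_2(C_{\varphi}:F^p_{\alpha}\to F^p_{\alpha})\lesssim\|C_{\varphi}\|_{wF^p_{\alpha}(X)\to F^p_{\alpha}(X)}<\infty$, and Theorem~\ref{com} yields $\varphi(z)=az+b$ with $|a|<1$, which finishes the proof.

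\emph{Main obstacle.} The delicate part is the ``only if'' direction: squeezing a genuine $2$-summing inequality out of a scalar operator-norm bound. This needs (i) the test vector $F_n$ to be built from an almost-isometric copy of $\ell^n_2$ inside $X$, which is exactly where infinite-dimensionality enters via Dvoretzky's theorem; (ii) control of $\|F_n\|_{wF^p_{\alpha}(X)}$ through the duality $(F^p_{\alpha})^{*}=F^{p'}_{\alpha}$; and (iii) the Minkowski-type step in the non-locally-convex range $0<p/2<1$. Step (iii) is what makes the case $1<p<2$ (left open in \cite{CW21}, whose method covers $p\ge2$) accessible, since there the inequality happens to point in the favorable direction.
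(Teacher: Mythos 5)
Your proposal is correct and follows essentially the same route as the paper: the sufficiency is the direct pointwise estimate $\|f(\varphi(z))\|_X^pe^{-\frac{p\alpha}{2}|\varphi(z)|^2}\lesssim\|f\|^p_{wF^p_{\alpha}(X)}$ combined with the finiteness of $\int_{\C}e^{-\frac{p\alpha}{2}(|z|^2-|az+b|^2)}dA(z)$ for $|a|<1$, and the necessity transfers the vector-valued bound to a $2$-summing inequality for the scalar $C_{\varphi}$ via Dvoretzky test functions and the reverse Minkowski inequality for the exponent $p/2<1$, exactly the Theorem~\ref{Vol-sum} scheme that the paper invokes (and omits), before concluding with Theorem~\ref{com}. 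You have in fact written out the details the paper leaves implicit, and all steps check out.
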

\begin{proof}
The necessity part can be proven by the same method as in the proof of Theorem \ref{Vol-sum} (see also \cite{Bl20} and \cite[Theorem 11.3]{HJLL}). We omit it here. The sufficiency part is actually contained in the proof of \cite[Theorem 4.4]{CW21}. In fact, if $\varphi(z)=az+b$ for some $a,b\in\C$ with $|a|<1$, then by \cite[Theorem 2.7]{Zh}, for any $f\in wF^p_{\alpha}(X)$,
\begin{align*}
\|C_{\varphi}f\|^p_{F^p_{\alpha}(X)}&=\int_{\C}\|f(\varphi(z))\|^p_{X}e^{-\frac{p\alpha}{2}|z|^2}dA(z)\\
&\leq\|f\|^p_{wF^p_{\alpha}(X)}\int_{\C}e^{-\frac{p\alpha}{2}(|z|^2-|az+b|^2)}dA(z).
\end{align*}
Hence $C_{\varphi}:wF^p_{\alpha}(X)\to F^p_{\alpha}(X)$ is bounded.
\end{proof}

\medskip


\noindent{\bf Acknowledgment.} The authors would like to thank the referee for valuable suggestions.

\end{document}